\newcommand{\M}{\mathcal{M}}
\renewcommand{\L}{\mathbb{L}}
\newcommand{\E}{\ensuremath{\mathbb{E}}}
\renewcommand{\P}{\ensuremath{\mathbb{P}}}
\newcommand{\R}{\ensuremath{\mathbb{R}}}
\newcommand{\pen}{\mathrm{pen}}
\newcommand{\p}{\mathrm{p}}
\newcommand{\Ne}{\mathbb{N}}
\DeclareMathOperator*{\argmin}{arg\,min\,}
\newcommand{\1}{\mathds{1}}
\newcommand{\e}{\varepsilon}
\newcommand{\la}{\lambda}
\renewcommand{\hat}{\widehat}
\renewcommand{\tilde}{\widetilde}
\newcommand{\J}{\mathcal{J}}
\newcommand{\Lp}{{\mathbb L}_p}
\newtheorem{theorem}{Theorem}[section]
\newtheorem{corollary}[theorem]{Corollary}
\newtheorem{lemma}[theorem]{Lemma}
\newtheorem{proposition}[theorem]{Proposition}
\newtheorem{remark}[theorem]{Remark}
\begin{document}

\title{Is model selection possible for the $\ell_p$-loss?\\ PCO estimation  for  regression models}
\author[1]{Claire Lacour}
\author[2]{Pascal Massart}
\author[3,2]{Vincent Rivoirard}
\affil[1]{Univ Gustave Eiffel, Univ Paris Est Creteil, CNRS, LAMA UMR8050,
       F-77447, Marne-la-Vall\'ee, France}
\affil[2]{Universit\'e Paris-Saclay, CNRS, Inria, Laboratoire de math\'ematiques d'Orsay, 91405, Orsay, France.}
\affil[3]{CEREMADE, CNRS, Universit\'e Paris-Dauphine, Universit\'e PSL, 75016 PARIS, FRANCE}

\maketitle

\begin{abstract}
This paper addresses the problem of  model selection in the sequence model $Y=\theta+\varepsilon\xi$, when $\xi$ is sub-Gaussian, for non-euclidian loss-functions. In this model,  the Penalized Comparison to Overfitting procedure is studied for the weighted $\ell_p$-loss, $p\geq 1.$ Several oracle inequalities are derived from concentration inequalities for sub-Weibull variables.  Using judicious collections of models and penalty terms,  minimax rates of convergence are stated for Besov bodies $\mathcal{B}_{r,\infty}^s$.  These results are applied to the functional model of nonparametric regression.
\end{abstract}

\medskip

\noindent \textbf{MSC Classification:}\\
- Primary: 62G05, 62C20\\
- Secondary: {62G08, 60E15\\

\medskip

\noindent \textbf{Keywords:} Model selection, Oracle inequalities, Minimax rates, $\ell_p$-loss, Sub-Gaussian sequence model, Nonparametric regression

\section{Introduction}
The problem of selecting a model from among several candidates is essential in statistics and machine learning, as well as in many application fields. In the most general sense, the aim of model selection is to construct data-driven criteria for selecting a model $m$ from a given collection $\mathcal{M}$.
In other words, if one observes some random variable $\xi^{(n)}$ (which can be typically a random vector of size $n$) with unknown distribution depending on some quantity $f$ (the target) belonging to a set $\mathcal{S}$, a flexible approach to estimate $f$ is to consider some
collection of preliminary estimators $\big(\hat{f}_{m}\big)_{m\in\mathcal{M}}$ and then try to design some genuine data-driven procedure $\hat
{m}\in\mathcal{M}$ to produce a new estimator $\hat{f}_{\hat{m}}$. Considering some loss function $\ell$, we measure the quality of each estimator $\hat{f}_{m}$, through the quantity $\ell\big(f,\hat {f}_{m}\big)$ and mathematical results on estimator selection are
formulated in terms of upper bounds on $\ell\big(  f,\hat{f}_{\hat{m}}\big)  $ that allow to measure how far
this quantity is from what is usually called the \textit{oracle} risk
$\inf_{m\in\mathcal{M}}\mathbb{E}\,\ell\big[\big(  f,\hat{f}_{m}\big)\big]$. These
comparison inequalities are called oracle inequalities. The quadratic loss $\ell\big(  f,\hat{f}_{m}\big)  =\big\Vert
f-\hat{f}_{m}\big\Vert ^{2}$ is a standard choice, but of course other losses are considered in the literature. 

Actually, in the classical model selection framework, developed and popularized by Birg\'e and Massart \cite{BM01}, \cite{BM07}, \cite{Massart2007}, the
list of estimators and the loss function are intimately related in the sense
that they derive from the same \textit{contrast function} (also called
\textit{empirical risk} in the machine learning literature). More precisely,
a contrast function $L_{n}$ is a function on the set $\mathcal{S}$ depending
on the observation $\xi^{(n)}$ in such a way that
\[
g\in{\mathcal S}\longmapsto\mathbb{E}\left[  L_{n}\left(  g\right)  \right]
\]
achieves a minimum at point $f$. Given some collection of subsets $(S_{m}) _{m\in\mathcal{M}}$ of $\mathcal{S}$, called models in the sequel, for every $m\in\mathcal{M}$, some estimator
$\hat{f}_{m}$ of $f$ is obtained by minimizing $L_{n}$ over $S_{m}$ ($\hat
{f}_{m}$ is called \textit{minimum contrast estimator} or \textit{empirical
risk minimizer}). In the case where $\xi^{\left(  n\right)
}=\left(  \xi_{1},\ldots,\xi_{n}\right)  $, an empirical criterion $L_{n}$ can be
defined as an empirical mean
\[
L_{n}\left(  g\right)  =P_{n}\left[  L\left(  g,.\right)  \right]  :=\frac
{1}{n}\sum_{i=1}^{n}L\left(  g,\xi_{i}\right)  \text{,}%
\]
which justifies the terminology of empirical risk. Empirical risk minimization includes maximum likelihood and least squares estimation. The penalized empirical risk selection procedure consists in considering some
proper penalty function $\operatorname*{pen}$: $\mathcal{M}\rightarrow
\mathbb{R}_{+}$ and taking $\hat{m}$ minimizing
\begin{equation}\label{def:sele}
L_{n}\left(  \hat{f}_{m}\right)  +\pen(m)
\end{equation}
over $\mathcal{M}$. We can then define the selected model $S_{\hat{m}}$ and
the corresponding selected estimator $\hat{f}_{\hat{m}}$. Penalized criteria have been proposed in the early seventies by Akaike or
Schwarz (see \cite{AIC} and \cite{BIC}) for penalized maximum
log-likelihood in the density estimation framework and Mallows for penalized
least squares regression (see 
\cite{Mallows}). In both
cases the penalty functions are proportional to the number of parameters $D_{m}$ of the corresponding model $S_{m}$. In such settings, the performance of model selection estimators can be studied for the  \textquotedblleft
natural\textquotedblright\ (non negative) loss function  $\ell$ attached to $L_{n}$ through the simple definition
\begin{equation}\label{e1l}
\ell\left(  f,g\right)  =\mathbb{E}[L_{n}(g)]
-\mathbb{E}[  L_{n}(  f)],\quad g\in\mathcal{S}.
\end{equation}
This approach has been successfully applied in many settings: density estimation or Poisson intensity estimation where $\ell$ is the Kullback-Leibler divergence or the $\mathbb L_2$-loss, nonparametric regression  for the $\mathbb L_2$-loss,
binary classification {for the 0-1 loss} 
or the Gaussian white noise model still for the $\mathbb L_2$-loss. See \cite{Massart2007} and references therein. Although it is not derived from an empirical contrast, we mention the model selection work associated with the Hellinger distance \cite{MR816706}, \cite{MR2834722}. This explains why, although successfully applied in many settings, model selection procedures have only been defined and analyzed for very specific loss functions. In particular, non-Euclidian loss functions, as $\L_p$-losses, have rarely been considered for model selection procedures. It is not the case for other classical nonparametric estimation procedures.

Although the use of $\L_p$-loss in nonparametric statistics goes back at least to \cite{bretagnollehuber,IbragimovHasminski80,stone82}, an important advance was made by \cite{nemirovski} who established optimal rates of convergence in the problem of multivariate nonparametric regression for $\L_p$-loss and functions belonging to $\L_q$-Sobolev classes with possibly $p\neq q$.
Some years later, fundamental works have been made by Oleg Lepski, who proposed the so-called famous Lepski-method for selecting a bandwidth of a kernel estimator.
In \cite{Lepski91}, he studied adaptive estimation under $\L_p$-loss, $1\leq  p \leq \infty$ over the collection of H\"{o}lder classes.
Then \cite{LepskiMammenSpokoiny} introduced a local bandwidth selection scheme to give kernel estimates  which achieve optimal rates of convergence over Besov classes in the  Gaussian white noise model, the anisotropic multivariate case being  examined by \cite{KLP}.
Then \cite{GL08} developed a powerful methodology for selecting a bandwidth of a kernel estimate, which works in a multitude of contexts and allows to establish oracle inequalities and to derive optimal rates of convergence: see \cite{GL2011}, \cite{GL13}, \cite{GL14}. Nevertheless, the implementation of this method needs two steps of minimization, each requiring a thorough calibration.
%
Note that the estimation under the $\L_1$-loss for bandwidth-selected  kernel estimators has been investigated in 
\cite{devroyelugosi}, 
and  the $\L_p$-aggregation of estimators has been studied by \cite{goldenshluger09}.

In a very different spirit, another very popular method for adaptive estimation of functions is wavelet thresholding. In a series of papers, Donoho, Johnstone, Kerkyacharian and Picard have shown the power of wavelet thresholding for $\L_p$-estimation over the scale of Besov classes: see \cite{DJKP2},  \cite{DJKP},\cite{DJKP3}, \cite{DJ98}. Refinements have been proposed in  \cite{Juditsky97}, \cite{KPT}, \cite{HKP}, \cite{johnstonesilverman}, to name but a few. 
It must be noted that these thresholding methods generally suffer from logarithmic losses in the rates of convergence. We refer the reader to the book by \cite{hkpt} which details the construction of wavelet bases, their use in statistical estimation, $\L_p$-minimax results as well as computational aspects.  Johnstone's recent book \cite{johnstone2019} addresses nonparametric function estimation by carefully studying the infinite Gaussian sequence model, with many results and thoughts about wavelet thresholding and adaptive minimaxity over ellipsoids. 

We finally mention the forthcoming manuscript \cite{ace24} which considers Bayesian nonparametric concentration rates of procedures based on Heavy-tailed and Horseshoe priors for $\ell_p$-norms in the Gaussian white noise model.


The main goal of this paper is then to answer the following natural question: Is it possible to design a model selection procedure in the same spirit as in \eqref{def:sele}, and in particular resulting from one minimization step, so that it achieves optimal theoretical performances for non-Euclidian losses? In particular, we have in mind that classical model selection procedures are able to achieve optimal oracle properties and sharp minimax rates of convergence on classical functional spaces for Euclidian losses. We tackle this issue by considering the classical infinite sequence model and study a specific model selection procedure, called \textit{PCO}, for weighted $\ell_p$-loss functions, with $1\leq p<\infty$.
As explained by \cite{johnstone2019} (see his Preface and Section 1.5), the Gaussian sequence model "captures many of the conceptual issues associated with non-parametric estimation".
\subsection{The PCO estimation procedure for the sub-Gaussian sequence model}\label{sec:model}
For $\Lambda$ a countable set, we consider the following classical sequence model: 
\begin{equation}\label{def:model}
Y_{\la}=\theta_{\la} + \e \xi_{\la}, \qquad \la \in \Lambda.
\end{equation}
In this model, the noise level $\varepsilon$ is assumed to be smaller than a constant, say 1, and $\varepsilon\to 0$ defines the asymptotic setting of our study. The $\xi_{\la}$'s are i.i.d. centered variables, satisfying the sub-Gaussian property, i.e.
\begin{equation}
\label{subgauss}
\P(|\xi_\lambda|\geq t)\leq 2e^{-t^2/2},\quad t\geq 0.
\end{equation}
The previous definition refers,  for instance, to Proposition~2.5.2 of \cite{vershynin2018} with scale parameter $K_1=\sqrt{2}$; 
note that fixing $K_1=\sqrt{2}$ is not a restriction since, in our setting, we can replace the noise level $\e$ with $K_1\e$ without loss of generality.

We aim at estimating the sequence $\theta=(\theta_\lambda)_{\lambda\in\Lambda}$ by using a finite number of observations, say $(Y_{\la})_{\lambda\in\Lambda^{(N)}}$ where $\Lambda^{(N)}$ denotes the first $N$ elements of $\Lambda $. The integer $N$ may increase as $\e$ decreases, so we actually face with a nonparametric problem.  To connect our setting with nonparametric regression,  we may have in mind that $N\propto\e^{-2}$ (see \cite{DJKP2,Juditsky97,KP2000} and Sections~\ref{sec:minimax} and \ref{sec:regression}) but, unless specified, our results hold for any $N$.

For each $m$ a subset of $\Lambda^{(N)}$, called model in the sequel, we set
$$\hat \theta^{(m)}=\big(Y_{\la}\1_{\{\la \in m\}}\big)_{\la \in \Lambda}.$$
{In particular, $\hat \theta^{(m)}_\lambda=0$ for $\lambda\notin\Lambda^{(N)}$. For $1\leq p<\infty$, and $w=(w_\lambda)_{\lambda\in\Lambda}$ a sequence of non negative weights, we denote $\ell_p(w)$ the weighted $\ell_p$-norm on $\R^\Lambda$: 
\begin{equation}\label{defi-normlp}
\|\vartheta\|_{\ell_p(w)}^p=\sum_{\la \in \Lambda} w_\lambda|\vartheta_{\la}|^p,\quad \vartheta\in\R^\Lambda.
\end{equation}
In Model \eqref{def:model}, we consider the risk associated with this weighted $\ell_p$-norm. We assume in the sequel that $\|\theta\|_{\ell_p(w)}<\infty$.

Given a collection of models $\mathcal{M}\subset\mathcal{P}(\Lambda^{(N)})$, we wish to select $\hat m\in \mathcal{M}$ in the best possible way. 
For this purpose, as explained previously, we rely on the {\it PCO criterion} introduced by \cite{PCO1, PCO2}. The heuristic of this approach is to build the goodness of fit criterion by using the estimator which has the smallest bias among the collection $(\hat \theta^{(m)})_{m\in\mathcal{M}}$. In our setting, it means that we have to consider $\hat\theta^{(\Lambda^{(N)})}$ (see \eqref{eq:BV}). Then, adding as usual in the nonparametric setting a penalty term, we set 
\begin{align*}
\hat{m}&=\argmin_{m\in \mathcal{M}}\left\{\|\hat\theta^{(m)}-\hat\theta^{(\Lambda^{(N)})}\|_{\ell_p(w)}^p+\pen(m)\right\}
\end{align*}
and estimate $\theta$ by $$\tilde\theta=\hat \theta^{(\hat m)}.$$
The idea of this methodology is to use $\|\hat\theta^{(m)}-\hat\theta^{(\Lambda^{(N)})}\|_{\ell_p(w)}^p$ as a preliminary estimator of the bias of $\hat\theta^{(m)}$. The role of $\pen(m)$ is then twofold: adjusting this preliminary step and taking into account the variance of $\hat \theta^{(m)}$. The estimator $\tilde\theta$ will be called the \textit{Penalized Comparison to Overfitting} (abbreviated as \textit{PCO}) in the sequel. This terminology is justified by the overfitting properties of $\hat\theta^{(\Lambda^{(N)})}$. Of course, setting for $m\in \mathcal{M}$,
$$\mathrm{Crit}(m)=-\sum_{\la \in m}w_\lambda|Y_{\la}|^p+\pen(m),$$
we obtain
$$\hat{m}=\argmin_{m\in \mathcal{M}}\mathrm{Crit}(m).$$
The heuristic of this approach is then different from the classical approach based on the contrast function. However, observe that if we take $p=2$, the criterion function $\mathrm{Crit}$ corresponds to the one used in regression for various famous criteria such as Mallows's $C_p$ \cite{Mallows}, AIC \cite{AIC} or BIC \cite{BIC} for instance. Two remarks are in order:
Unlike Lepski type procedures, the derivation of the PCO estimate $\tilde\theta$ involves only one minimization step, so its computational cost is much lower.
Furthermore, if we take $\pen(m)$ of the form
\begin{equation}\label{pen-seuillage}
\pen(m)=\sum_{\lambda\in m}w_\lambda t^p,
\end{equation}
for some $t>0$, we have: 
$$\hat{m}=\big\{\lambda\in\Lambda^{(N)}:\quad |Y_\lambda|>t\big\}$$
and the PCO estimate corresponds to the thresholding estimate with threshold $t$:
$$ \tilde\theta_\lambda=\left\{
\begin{array}{cc}
Y_\lambda\times 1_{\{|Y_\lambda|>t\}}&\lambda\in\Lambda^{(N)},\\
0&\lambda\notin\Lambda^{(N)}.
\end{array}
\right.
$$
Results of Sections~\ref{sec:oracle}, \ref{sec:minimax} and \ref{sec:regression} show that we have to refine the definition of $\pen(m)$ given in \eqref{pen-seuillage} to obtain optimal results. This is described in the next subsection.\\
\subsection{Contributions}
As observed before, Lespki-type procedures achieve optimal properties in many settings but their computational cost is prohibitive. The main contributions of this paper consist in showing that under a convenient choice of $\pen(m)$, the PCO estimate, which is based on the simple $\ell_p(w)$-criterion $\mathrm{Crit}$ and whose computational cost is reasonable, is able to achieve optimal results in oracle and minimax settings for any value of $p\in [1,+\infty)$. 

We first  prove in Theorem~\ref{theo:oracle1} that $\tilde\theta$ satisfies a very general oracle inequality whatever the expression of the penalty term $\pen(m)$. For this purpose, we analyse the behavior of bias and variance terms of any estimate $\hat\theta^{(m)}$. This first result shows how to choose $\pen(m)$, so that we obtain a more specific oracle inequality established in Theorem~\ref{theo:oracleesp}. As usual, these results depend on sharp concentration inequalities and our results rely on Theorem~\ref {theo:concentration} involving sharp concentration of terms of the form $\sum_{\lambda \in \mathcal{I}} |\xi_\lambda|^p$ around their mean. The sharp tail bound involves the sum of two terms: a quadratic one, proportional to $\sqrt{x}$, which is classical, and a second one proportional to $x^{p/2}$. This last term is linear for $p=2$ but it raises many technical difficulties otherwise in particular for $p>2$. This result then reveals an elbow phenomenon depending on whether $p$ is larger than 2 or not. We take into account this elbow to propose a more refined function $\pen(m)$ for the case $p>2$ in Theorem~\ref{theo:oracleesp2}. This result allows to deal with very large collections of models $\mathcal{M}$ when $p>2$, which is crucial for the minimax setting.

Minimax rates of convergence for the estimate $\tilde\theta$ are first derived when weights are constant and sequences $\theta$ have tails with a polynomial decreasing. We then consider the classical class of Besov bodies $\mathcal{B}_{r,\infty}^{s}(R)$ and study rates for any $r\geq 1$ and any $s> 1/r$ for the $\ell_p(w)$-risk. We prove in Theorem~\ref{rateslp} that for $p\leq 2$, $\tilde\theta$ is optimal under a suitable choice of the penalty function. For $p>2$, $\tilde\theta$ is also optimal if $r\geq p$ and if $r\leq p/(2s+1)$. For the case $p>2$ and $p/(2s+1)<r<p$, the upper bound differs from the lower bound by a logarithmic term. To deal with the case $r<p$, we need to consider very large collection of models, in particular if $r\leq p/(2s+1)$. The last contribution of our paper consists in extending these last results to the functional framework. In Section~\ref{sec:regression}, we consider  the nonparametric regression model
$$
X_i = f\bigg(\frac{i}{n}\bigg) +\sigma \eta_i,\qquad 1\leq i \leq n,
$$
(see Model~\eqref{model-NP}) and propose a PCO estimate of the function $f$ based on wavelet representations. The generalization of results of Theorem~\ref{rateslp} allows to obtain Theorem~\ref{ratesLp} that provides rates of our procedure on functional Besov spaces  for the standard functional $\L_p$-loss and to discuss optimality. The conclusions are similar to those of the sequential case. We also present the main steps of the methodology to derive functional minimax rates, which represents an interest per se.
\subsection{Plan of the paper and notation}
The paper is organized as follows. Section~\ref{sec:oracle} is devoted to oracle results and the statement of concentration inequalities used in this paper. Section~\ref{secg:minimax} presents the minimax rates of convergence achieved by the PCO estimator. 
Section~\ref{sec:regression} is devoted to the nonparametric regression model. Finally, Section~\ref{sec:proofs} presents the proofs of the results.

For any set $A$ we denote by $|A|$ the cardinal of $A$, and $\mathcal{P}(A)$ the set of its subsets.  The notation $\Ne$ is the set of non-negative integers: $\Ne=\big\{0,1,2,\cdots\}.$
We denote by $u_\e\lesssim v_\e$ when there
exists $0 < A < \infty$ such that $ u_\e \leq A v_\e$ for all $\e>0$. When  $u_\e\lesssim v_\e$ and $v_\e\lesssim u_\e$ we write $u_\e\approx v_\e$. Remember that, for $1\leq p\leq\infty$, $\ell_p(w)$ denotes the weighted $\ell_p$-norm, defined in \eqref{defi-normlp}.  When weights $w_\lambda$ are all equal to 1, we use the classical notation $\ell_p$ instead of $\ell_p(w)$. In the sequel, for short, we set $\|\cdot\|_p=\|\cdot\|_{\ell_p(w)}$. The functional norm on the Banach space $\Lp(\R)$ will be denoted by $\|\cdot\|_{\Lp}$. 
\section{Oracle approach and concentration inequalities}\label{sec:oracle}
Given any $p\geq 1$, the goal of this section is to provide some optimality results in the oracle setting for the $\ell_p(w)$-risk in Model~\eqref{def:model}. In particular, in the sequel, except in Theorem~\ref{theo:concentration} (for the first point), we assume that the $\xi_\lambda$'s are i.i.d. centered sub-Gaussian variables. Along this section, we denote
$$\sigma_q:=\big(\E[|\xi_{\lambda}|^q]\big)^{1/q},\quad 1\leq q<\infty.$$
We first derive in subsequent Theorem~\ref{theo:oracle1} a very general result which holds for any penalty function. Actually, by highlighting the key role of sums of the form $\sum_{\lambda \in \mathcal{I}} |\xi_\lambda|^p$, Theorem~\ref{theo:oracle1} allows to determine the ideal choice for the penalty $\pen(m)$. Concentrations of such sums around their mean are precisely studied in Theorem~\ref{theo:concentration} and Corollary~\ref{coro:concentration}. This allows to refine Theorem~\ref{theo:oracle1}, and sharp oracle inequalities are established in Theorems \ref{theo:oracleesp} and \ref{theo:oracleesp2}.

Before stating these results, we first observe that for any model $m$, we can easily compute the distance of the estimator $\hat \theta^{(m)}$ with respect to $\theta$ for the norm $\ell_p(w)$. Indeed, we have for $m\subset\Lambda^{(N)}$:
\begin{align*}
\|\hat \theta^{(m)} - \theta \|_p^p &=\sum_{\la \in m} w_\lambda|Y_{\la}-\theta_{\la}|^p+\sum_{\la \notin m} w_\lambda|0-\theta_{\la}|^p\\
&=V_p(m)+B_p(m),
\end{align*}
with
\begin{equation}\label{eq:BV}
B_p(m):=\sum_{\la \in \Lambda\setminus m}w_\lambda | \theta_{\la}|^p\quad\mbox{and}\quad V_p(m):=\e^p\sum_{\la \in m}w_\lambda  | \xi_{\la}|^p.
\end{equation}
In the last decomposition, $B_p(m)$ (resp. $V_p(m)$) can be viewed as an $\ell_p(w)$-bias term (resp. an $\ell_p(w)$-variance term). We now study $\tilde\theta=\hat \theta^{(\hat m)}$ in the oracle setting.
\subsection{A general oracle inequality}
Recall that $\tilde\theta=\hat \theta^{(\hat m)}$ with
$$\hat \theta^{(m)}=(Y_{\la}\1_{\{\la \in m\}})_{\la \in \Lambda},\quad\hat{m}=
\argmin_{m\in \mathcal{M}}\left\{-\sum_{\la \in m}w_\lambda|Y_{\la}|^p+\pen(m)\right\}.$$
We obtain the following general oracle inequality for  any $p\in[1,+\infty)$.
\begin{theorem}\label{theo:oracle1} 
If $p>1$, for any arbitrary $m\in \mathcal{M}$, we have for any $\alpha\in (0,2)$:
$$\|\tilde \theta-\theta\|_p^p\leq M_{p,\alpha} \|\hat \theta^{(m)}-\theta\|_p^p +\frac{2}{\alpha}
\Big[(1+\alpha)V_p(\hat m)-\pen(\hat m)\Big]
-\frac{2}{\alpha} \Big[(1+\alpha)V_p(m)-\pen(m)\Big],$$
where $M_{p,\alpha}$ depends only on $p$ and $\alpha$.
In particular, with $\alpha=1$, we obtain:
\begin{equation}\label{alpha=1}
\|\tilde \theta-\theta\|_p^p\leq M_{p}\|\hat \theta^{(m)}-\theta\|_p^p +2
\Big[2V_p(\hat m)-\pen(\hat m)\Big]
-2 \Big[2V_p(m)-\pen(m)\Big],
\end{equation}
where $M_p=M_{p,1}$ is given in Equation \eqref{valeurdeM}  (see the proof).\\ If $p=1$, the previous inequalities are true by replacing $\frac{2}{\alpha}$ by $\frac{1}{\alpha\wedge 1}$ in the right hand side of the first inequality.
\end{theorem}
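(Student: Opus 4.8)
The plan is to prove Theorem~\ref{theo:oracle1} as a purely pathwise statement — it holds for every realization of $(\xi_\la)_\la$, so the argument is algebraic and no concentration is invoked at this stage. Fix $m\in\mathcal M$. Since $\hat m$ minimizes $\mathrm{Crit}$, we have $\mathrm{Crit}(\hat m)\le\mathrm{Crit}(m)$, which after rearranging reads $\sum_{\la\in m}w_\la|Y_\la|^p-\sum_{\la\in\hat m}w_\la|Y_\la|^p\le\pen(m)-\pen(\hat m)$. Writing $a=m\setminus\hat m$, $b=\hat m\setminus m$ and cancelling the indices in $m\cap\hat m$, this becomes
\[
\sum_{\la\in a}w_\la|Y_\la|^p\ \le\ \sum_{\la\in b}w_\la|Y_\la|^p+\pen(m)-\pen(\hat m).
\]
Because $a\subset m$, $b\subset\hat m$ and $b\subset\Lambda^{(N)}\setminus m$, one has $B_p(\hat m)=B_p(m)+\sum_{\la\in a}w_\la|\theta_\la|^p-\sum_{\la\in b}w_\la|\theta_\la|^p\le B_p(m)+\sum_{\la\in a}w_\la|\theta_\la|^p$, together with $\sum_{\la\in b}w_\la|\theta_\la|^p\le B_p(m)$, $\e^p\sum_{\la\in a}w_\la|\xi_\la|^p\le V_p(m)$ and $\e^p\sum_{\la\in b}w_\la|\xi_\la|^p\le V_p(\hat m)$. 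Recalling $\|\tilde\theta-\theta\|_p^p=V_p(\hat m)+B_p(\hat m)$ (from \eqref{eq:BV}), the whole statement reduces to bounding $\sum_{\la\in a}w_\la|\theta_\la|^p$ from above by a combination of $B_p(m)$, $V_p(m)$, $V_p(\hat m)$ and $\pen(m)-\pen(\hat m)$.

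The only analytic ingredient is the convexity bound $|x+y|^p\le(1+\delta^{-1})^{p-1}|x|^p+(1+\delta)^{p-1}|y|^p$, valid for $p\ge1$, $x,y\in\R$, $\delta>0$ (it follows from $|x+y|\le|x|+|y|$ and convexity of $t\mapsto t^p$). I would apply it twice with two distinct parameters. First, with $x=Y_\la$, $y=-\e\xi_\la$ and parameter $\beta$, summed against $w_\la$ over $a$, it gives $\sum_{\la\in a}w_\la|\theta_\la|^p\le(1+\beta^{-1})^{p-1}\sum_{\la\in a}w_\la|Y_\la|^p+(1+\beta)^{p-1}\,\e^p\sum_{\la\in a}w_\la|\xi_\la|^p$. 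Second, with $x=\theta_\la$, $y=\e\xi_\la$ and parameter $\gamma$, summed over $b$, it gives $\sum_{\la\in b}w_\la|Y_\la|^p\le(1+\gamma^{-1})^{p-1}\sum_{\la\in b}w_\la|\theta_\la|^p+(1+\gamma)^{p-1}\,\e^p\sum_{\la\in b}w_\la|\xi_\la|^p$. Substituting the second bound, then the displayed criterion inequality, into the first, and using the three crude bounds above, yields
\begin{align*}
\sum_{\la\in a}w_\la|\theta_\la|^p\le{}&(1+\beta^{-1})^{p-1}(1+\gamma^{-1})^{p-1}B_p(m)+(1+\beta^{-1})^{p-1}(1+\gamma)^{p-1}V_p(\hat m)\\
&+(1+\beta)^{p-1}V_p(m)+(1+\beta^{-1})^{p-1}\big(\pen(m)-\pen(\hat m)\big).
\end{align*}

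Adding $V_p(\hat m)+B_p(m)$ to both sides produces an upper bound for $\|\tilde\theta-\theta\|_p^p$, and it remains to choose $\beta,\gamma$. Since $\pen(m)$ and $\pen(\hat m)$ enter only through $\pen(m)-\pen(\hat m)$, whose sign is uncontrolled, its coefficient must be matched exactly: I would fix $\beta>0$ by $(1+\beta^{-1})^{p-1}=2/\alpha$, which is solvable precisely because $\alpha<2$. I would then fix $\gamma>0$ by $(1+\gamma)^{p-1}=1+\alpha/2$, so that the coefficient of $V_p(\hat m)$ becomes $1+\tfrac2\alpha(1+\gamma)^{p-1}=\tfrac{2(1+\alpha)}{\alpha}$. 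The coefficients of $B_p(m)$ and $V_p(m)$ are then explicit functions of $p$ and $\alpha$; setting $M_{p,\alpha}$ to the larger of the resulting $B_p(m)$-coefficient and of $(1+\beta)^{p-1}+\tfrac{2(1+\alpha)}{\alpha}$, and using $B_p(m),V_p(m)\ge0$, one obtains exactly the announced inequality — and, for $\alpha=1$, the explicit value of $M_p=M_{p,1}$ in \eqref{alpha=1}. When $p=1$ the convexity bound degenerates to the plain triangle inequality, no parameter $\beta$ or $\gamma$ is available, the coefficient of $\pen(m)-\pen(\hat m)$ can no longer be tuned to $2/\alpha$, and one is left with $1/(\alpha\wedge1)$ in its place.

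I expect the delicate point to be this constrained bookkeeping rather than any single estimate: the rigidity forced by $\pen(m)-\pen(\hat m)$ fixes $\beta$, leaving the single parameter $\gamma$ to simultaneously deliver the prescribed coefficient $\tfrac{2(1+\alpha)}{\alpha}$ on $V_p(\hat m)$ and keep the $B_p(m)$-constant finite; one must also check these constants remain admissible for all $p>1$ and $\alpha\in(0,2)$, and position each (lossy) use of the convexity bound so that it does not inflate the variance terms $V_p(m)$ and $V_p(\hat m)$, which are meant to be absorbed later by the penalty via Theorem~\ref{theo:concentration}.
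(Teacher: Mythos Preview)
Your argument for $p>1$ is correct and is a genuinely different, more elementary route than the paper's. The paper reorganizes the criterion inequality through an auxiliary function $J(m)$, bounds $J(\hat m)-J(m)$ via a case-splitting lemma (their Lemma~5.1, which distinguishes $|\theta_\la|\ge K\e|\xi_\la|$ from the opposite) together with the convexity bound (their Lemma~5.2), and then \emph{absorbs} a fraction $(1-\alpha/2)$ of $\|\tilde\theta-\theta\|_p^p$ from the right-hand side back into the left. Your proof bypasses both the case analysis and the absorption: two applications of the single convexity inequality, tuned to force the prescribed coefficients on $\pen(m)-\pen(\hat m)$ and $V_p(\hat m)$, suffice. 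This is cleaner and entirely adequate for the theorem as stated, though the constant $M_{p,\alpha}$ you obtain is \emph{not} the one in the paper's formula~\eqref{valeurdeM}; your remark that ``for $\alpha=1$, the explicit value of $M_p=M_{p,1}$ in~\eqref{alpha=1}'' matches is therefore inaccurate --- you get a valid constant, just a different explicit expression.

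There is a small gap in your handling of $p=1$. Your route gives the penalty difference coefficient $1$, which matches $\tfrac{1}{\alpha\wedge 1}$ only when $\alpha\ge 1$. For $\alpha<1$ the target coefficient is $1/\alpha>1$, and since the sign of $\pen(m)-\pen(\hat m)$ is unknown you cannot simply enlarge it. The paper handles $\alpha<1$ by keeping a term $(1-\alpha)\|\tilde\theta-\theta\|_1$ on the right and absorbing it; your direct bound does not admit this. This is minor --- only $\alpha=1$ is used downstream --- but your one-line dismissal does not actually cover the range $\alpha\in(0,1)$.
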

The proof of Theorem~\ref{theo:oracle1} is provided in Section~\ref{prooforacle}. 
\begin{remark}\label{alpha}
The value of $M_{p,\alpha}$ is rather intricate (see the proof of Theorem~\ref{theo:oracle1}) and the best choice for $\alpha$ depends on $p$. Nevertheless, we numerically observe that $M_{p,1}$ is not far from the minimum of the function $\alpha\longmapsto M_{p,\alpha}$, and is even optimal for $p=1$ and $p=2$. This is why we focus on the case $\alpha=1$ and state Inequality~\eqref{alpha=1}.
\end{remark}

In view of the first result of Theorem~\ref{theo:oracle1}, optimality of the estimate $\tilde \theta$ will be achieved among all estimates $(\hat \theta^{(m)})_{m\in\M}$ if we are able to find $\pen(m)$ such that for a constant $\alpha\in (0,2)$, $\pen(m)$ is close to $(1+\alpha)V_p(m)$ for all $m\in\M$. Note that $V_p(m)$ is not observable. Therefore, we need concentration inequalities to find the suitable expression of $\pen(m)$ that is involved in our procedure.  Since $V_p(m)=\e^p\sum_{\la \in m}w_\lambda  | \xi_{\la}|^p$, we need to study  sums of  $|\xi_\lambda|^p$ where the $\xi_{\lambda}$'s are independent sub-Gaussian variables. 

\subsection{Concentration inequalities}
Let $\mathcal{I}\subset\Lambda$. We denote 
$D=\mbox{card}(\mathcal{I})$ and $$Z:= \sum_{\lambda \in \mathcal{I}} |\xi_\lambda|^p.$$
Using, for $r>0$, the Orlicz norm of a random variable $X$, defined by
$$ \|X\|_{\psi_{r} } = \inf \big\{\eta >0:\quad \E [\exp((|X|/\eta)^r)]\leq 2\big\},$$ 
we set
$b_\lambda=
\Big\||\xi_{\lambda}|^p-\E|\xi_{\lambda}|^p\Big\|_{\psi_{2/p}}$.
We have the following result.
 \begin{theorem}\label{theo:concentration}
Let $p\geq 1$. Assume that the $\xi_{\lambda}$'s are centered independent sub-Gaussian variables. 
There exist positive constants  $d_{1,p}$ and  $d_{2,p}$ only depending on $p$ such that, for any $x>0$,
$$\P(|Z-\E(Z)|\geq d_{1,p}\|b\|_{\ell_2}\sqrt{x}+d_{2,p}
\|b\|_{\ell_{1/(1-p/2)_+}}x^{p/2})\leq 2e^{-x}.$$
Moreover,  if the $\xi_{\lambda}$'s are also identically distributed, then,   for any $x>0$,
$$\P\Big(|Z-\E[Z]|\geq c_{1,p}\sqrt{Dx}+c_{2,p}D^{(1-p/2)_+}x^{p/2}\Big)\leq 2e^{-x},$$
where $c_{1,p}$ and $c_{2,p}$ only depend on $p$ and $\|\xi_\lambda\|_{\psi_2}$.
\end{theorem}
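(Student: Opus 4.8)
The plan is to write $Z-\E[Z]=\sum_{\lambda\in\mathcal I}Y_\lambda$, where $Y_\lambda:=|\xi_\lambda|^p-\E|\xi_\lambda|^p$ is centered with $\|Y_\lambda\|_{\psi_{2/p}}=b_\lambda$ and the $Y_\lambda$'s are independent, and to reduce the whole statement to a sharp control of the moments $\big\|\sum_\lambda Y_\lambda\big\|_{L^q}$. First I would observe that the second (identically distributed) assertion is an immediate consequence of the first: in that case all the $b_\lambda$ equal a common value $b$, so that $\|b\|_{\ell_2}=b\sqrt D$ and $\|b\|_{\ell_{1/(1-p/2)_+}}=b\,D^{(1-p/2)_+}$, and $b$ is itself bounded by a constant depending only on $p$ and $\|\xi_\lambda\|_{\psi_2}$. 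Indeed, the elementary identity $\||X|^s\|_{\psi_{r/s}}=\|X\|_{\psi_r}^s$ gives $\||\xi_\lambda|^p\|_{\psi_{2/p}}=\|\xi_\lambda\|_{\psi_2}^p<\infty$, and the (quasi-)triangle inequality for the $\psi_{2/p}$-(quasi-)norm together with $\E|\xi_\lambda|^p\leq C_p\|\xi_\lambda\|_{\psi_2}^p$ absorbs the centering constant, so that $b_\lambda\leq K_p\,\|\xi_\lambda\|_{\psi_2}^p<\infty$. It therefore remains to prove the first inequality.

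The heart of the argument is a moment bound for $S:=\sum_{\lambda\in\mathcal I}Y_\lambda$. From $\|Y_\lambda\|_{\psi_{2/p}}=b_\lambda$ and the standard equivalence between finiteness of an Orlicz norm and polynomial growth of $L^q$-norms one gets $\|Y_\lambda\|_{L^q}\leq C_p\,b_\lambda\,q^{p/2}$ for every $q\geq1$. After a symmetrization step (replacing $Y_\lambda$ by $Y_\lambda-Y_\lambda'$ with $Y_\lambda'$ an independent copy, which changes constants and the $\ell$-norms of $b$ only by $p$-dependent factors, while $\|S\|_{L^q}$ can only increase by Jensen), I would apply a sharp Rosenthal--Lata\l{}a type moment inequality for sums of independent symmetric random variables. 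Feeding in the bound above, this should give, for every $q\geq2$,
\[
\|S\|_{L^q}\;\leq\;C'_p\Big(\sqrt q\,\|b\|_{\ell_2}+q^{p/2}\,\|b\|_{\ell_{1/(1-p/2)_+}}\Big).
\]
The passage to a tail bound is then routine: for $x>0$, take $q=\max(2,\lceil x\rceil)$; Markov's inequality gives $\P\big(|S|\geq e\,\|S\|_{L^q}\big)\leq e^{-q}\leq 2e^{-x}$, and since $q\leq x+2$, inserting the displayed bound and enlarging the constants yields the announced inequality with $d_{1,p},d_{2,p}$ depending only on $p$ (the values $x\lesssim1$ being absorbed in these constants).

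I expect the main obstacle to be precisely this moment inequality, and in particular getting the exact Lebesgue exponent $1/(1-p/2)_+$, which is where the elbow at $p=2$ shows up. The crude Rosenthal inequality is not enough: it contributes a term of order $q^{p/2}\|b\|_{\ell_q}$ up to an extra factor $q/\log q$, which for $p<2$ only produces $\|b\|_{\ell_\infty}$ in the relevant range of $q$, whereas what is needed afterwards to handle the very large collections of models is the strictly smaller norm $\|b\|_{\ell_{2/(2-p)}}$. Recovering it requires the optimized form of Lata\l{}a's moment inequality (optimizing over its free exponent), whose specialization to $\psi_{2/p}$-variables yields $\sqrt q\,\|b\|_{\ell_2}+q^{p/2}\|b\|_{\ell_{2/(2-p)}}$ when $1\leq p<2$ and $\sqrt q\,\|b\|_{\ell_2}+q^{p/2}\|b\|_{\ell_\infty}$ when $p\geq2$. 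The dichotomy is also forced at the level of technique: for $p>2$ the variables $|\xi_\lambda|^p$ need not have any finite exponential moment, so no Bernstein-type control of the moment generating function is available and the whole proof must go through $L^q$-norms.
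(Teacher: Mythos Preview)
Your proposal is correct and follows essentially the same route as the paper: both reduce to the observation that $Y_\lambda=|\xi_\lambda|^p-\E|\xi_\lambda|^p$ is sub-Weibull with parameter $2/p$, then invoke a sharp concentration/moment inequality for sums of independent sub-Weibull variables, and deduce the i.i.d.\ case by bounding the common value of $b_\lambda$ in terms of $\|\xi_\lambda\|_{\psi_2}$. The only difference is packaging: the paper quotes the tail bound directly from Zhang--Wei or Kuchibhotla--Chakrabortty (and notes that the moment-bound route via Gluskin--Kwapien for $p<2$ and Hitczenko for $p>2$ also works), whereas you unpack this by going through Lata\l a's sharp moment inequality and then Markov --- which is precisely the mechanism underlying those cited results.
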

Note that for the Gaussian i.i.d. case, when $p=1$, the Cirelson-Ibragimov-Sudakov inequality gives  $c_{1,1}=\sqrt{2}$ and $c_{2,1}=0$ (see Theorem 3.4 in \cite{Massart2007}).
When $p=2$,  we retrieve the well-known inequality for chi-squared variables and $c_{1,2}=c_{2,2}=2$ works: see Lemma~1 of \cite{LaurentMassart}. This also matches with
the Hanson-Wright inequality with identity matrix or Bernstein inequality for sub-exponential variables, see e.g. \cite{vershynin2018}.\\
In the general case $p\geq 1$, the result ensues from concentration theorems for sub-Weibull variables. Indeed, if we denote $X_{\lambda}=|\xi_{\lambda}|^p-\E|\xi_{\lambda}|^p$, we observe that the sub-Gaussianity property of $\xi_{\lambda}$ entails that $X_\lambda$ has a Weibull behavior: 
$$\P(|X_{\lambda}|\geq t) \leq C\exp(-ct^{2/p}),$$
for $C$ a constant. Such a variable, with bounded Orlicz norm with function $e^{x^r}-1$, is called a sub-Weibull variable.
Note that the Weibull parameter here is $r=2/p\leq 2$. Now our theorem directly follows from recent Theorem 1 of \cite{ZhangWei22},  or Theorem 3.1 of \cite{KuchibhotlaChakrabortty} (see also their Equation~(3.6)), both giving  explicit formulas for $d_{1,p}$ and $d_{2,p}$. In the i.i.d case, all  $b_\lambda$'s are equal and bounded by $\||\xi_\lambda|^p\|_{\psi_{2/p}}=\|\xi_\lambda\|_{\psi_{2}}$, up to a universal constant.

Observe that Theorem~\ref{theo:concentration} can also be deduced from older results, like moment bounds of \cite{GluskinKwapien95} for the case $r>1$. In particular, their corollary shows that our bound cannot be improved when $p<2$. For $r<1$, Theorem 6.2 of \cite{Hitczenko97} gives a two-sided moments inequality that leads to our tail result. 
Their bound cannot be improved meaning that if the $\xi_{\lambda}$'s are Gaussian, the upper bound is achieved up to a constant. 
Same optimality considerations are raised by \cite{KuchibhotlaChakrabortty}.

From Theorem~\ref{theo:concentration}, we obtain the following corollary.
\begin{corollary}\label{coro:concentration}
Let $p\geq 1$. Assume that the $\xi_{\lambda}$'s are i.i.d.  centered sub-Gaussian variables. For any $x\geq 1$,
with probability larger than $1-2\exp(-x)$,
\begin{equation}\label{conc-coro}
\frac{1}{2}\sigma_p^pD-\kappa_pD^{\big(1-\frac{p}{2}\big)_+}x^{\frac{p}{2}}\leq \sum_{\lambda \in \mathcal{I}} |\xi_\lambda|^p\leq \frac{3}{2}\sigma_p^pD+\kappa_pD^{\big(1-\frac{p}{2}\big)_+}x^{\frac{p}{2}},
\end{equation}
where $\kappa_p=c_{2,p}+c_{1,p}\max(1, c_{1,p}/(2\sigma_p^p))$ is a positive constant only depending on $p$ and $\sigma_p$ and $\|\xi_\lambda\|_{\psi_2}$.
\end{corollary}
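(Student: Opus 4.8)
The plan is to feed the i.i.d.\ tail bound of Theorem~\ref{theo:concentration} into the identity $\E[Z]=D\sigma_p^p$ and then absorb the Gaussian-type fluctuation $c_{1,p}\sqrt{Dx}$ partly into a fraction of the mean and partly into the heavy-tailed term $D^{(1-p/2)_+}x^{p/2}$.

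First I would record that $\E[Z]=\sum_{\lambda\in\mathcal I}\E|\xi_\lambda|^p=D\sigma_p^p$, so that the i.i.d.\ part of Theorem~\ref{theo:concentration} reads: for every $x>0$, with probability at least $1-2e^{-x}$,
$$\big|Z-D\sigma_p^p\big|\le c_{1,p}\sqrt{Dx}+c_{2,p}D^{(1-p/2)_+}x^{p/2}.$$
It therefore suffices to prove the deterministic inequality
$$c_{1,p}\sqrt{Dx}\ \le\ \tfrac12\,\sigma_p^p D+c_{1,p}\max\!\Big(1,\tfrac{c_{1,p}}{2\sigma_p^p}\Big)\,D^{(1-p/2)_+}x^{p/2}\qquad(x\ge1,\ D\ge1),$$
since then, adding and subtracting this bound on the event above and setting $\kappa_p=c_{2,p}+c_{1,p}\max(1,c_{1,p}/(2\sigma_p^p))$ yields exactly \eqref{conc-coro}; the case $D=0$ (i.e.\ $\mathcal I=\emptyset$) is trivial as then $Z=0$.

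To establish this deterministic inequality I would split along the elbow $p=2$. If $p\ge2$, then $(1-p/2)_+=0$ and $x^{p/2}\ge x$ because $x\ge1$, so the weighted arithmetic--geometric mean inequality $\sqrt{Dx}\le\tfrac12(\beta D+\beta^{-1}x)$ with the choice $\beta=\sigma_p^p/c_{1,p}$ gives $c_{1,p}\sqrt{Dx}\le\tfrac12\sigma_p^pD+\tfrac{c_{1,p}^2}{2\sigma_p^p}x\le\tfrac12\sigma_p^pD+\tfrac{c_{1,p}^2}{2\sigma_p^p}x^{p/2}$. If $1\le p<2$, I would further distinguish $x\le D$ and $x>D$: in the first case the same AM--GM bound applies together with the upgrade $x=x^{p/2}x^{1-p/2}\le x^{p/2}D^{1-p/2}$; in the second case $x>D$ and $p\ge1$ force $(D/x)^{(p-1)/2}\le1$, that is $\sqrt{Dx}=D^{1-p/2}x^{p/2}(D/x)^{(p-1)/2}\le D^{1-p/2}x^{p/2}$, hence $c_{1,p}\sqrt{Dx}\le c_{1,p}D^{1-p/2}x^{p/2}$ directly. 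In each of the three cases the coefficient in front of $D^{(1-p/2)_+}x^{p/2}$ is at most $\max(c_{1,p},c_{1,p}^2/(2\sigma_p^p))=c_{1,p}\max(1,c_{1,p}/(2\sigma_p^p))$, which closes the argument.

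There is no real obstacle here: the only point requiring care is the bookkeeping across $p=2$ and the two subregimes $x\le D$, $x>D$ when $p<2$, so as to verify that in each case the leftover of the $\sqrt{Dx}$ term is genuinely of the announced form $D^{(1-p/2)_+}x^{p/2}$ with the stated constant, and that the degenerate cardinalities $D\in\{0,1\}$ create no trouble (for $D=0$ the inequality is vacuous, and for $D\ge1$, $x\ge1$ all the power comparisons used above are licit).
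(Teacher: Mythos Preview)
Your proof is correct and follows essentially the same approach as the paper: compute $\E[Z]=D\sigma_p^p$, apply the i.i.d.\ case of Theorem~\ref{theo:concentration}, and absorb $c_{1,p}\sqrt{Dx}$ via the AM--GM split $2\sqrt{Dx}\le \theta D+\theta^{-1}x$ with $\theta=\sigma_p^p/c_{1,p}$ when $p\ge2$ or ($p<2$ and $x\le D$), and via the direct bound $\sqrt{Dx}\le D^{1-p/2}x^{p/2}$ when $p<2$ and $x>D$. Your write-up is in fact a bit more explicit than the paper's in tracking how the three subcases produce the announced constant $\kappa_p$.
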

\begin{proof} We obviously have $\E[Z]=D \sigma_p^p$. If $p\geq  2$, we use $2\sqrt{Dx}\leq \theta D+\theta^{-1}x$ with $\theta=\sigma_p^p/c_{1,p}$, and the inequality $x\leq x^{p/2}$. 
If $p<2$ and $x\leq D$, we use the same bound for $\sqrt{Dx}$ with the same $\theta$, 
and this time 
$x\leq D^{1-p/2}x^{p/2}$. Finally, if $p<2$ and $x> D$, we directly write $\sqrt{Dx}\leq D^{1-\frac{p}{2}}x^{\frac{p}{2}}$.
\end{proof}
Note that Corollary~\ref{coro:concentration} holds by replacing 1/2 (resp. 3/2) in \eqref{conc-coro} by $(1-\epsilon)$ (resp. $(1+\epsilon)$) for $\epsilon $ arbitrary small (with $\kappa_p$ depending on $\epsilon$).
\subsection{Refined oracle inequalities}
 In this section, we apply the general oracle inequality of Theorem~\ref{theo:oracle1} with $\alpha=1$ (see Remark~\ref{alpha}) and use sharp concentration inequalities of Corollary~\ref{coro:concentration} to derive suitable penalties. 

Typically, weights $w_\lambda$ may not depend on $\lambda$ or may be constant on some slices (see subsequent sections). Therefore, we consider the following partition of $\Lambda^{(N)}$:
\begin{equation}\label{permu}
\Lambda^{(N)}=\bigcup_{j\in\J} \Lambda_j
\end{equation}
so that $w_\lambda$ is constant for any $\lambda\in \Lambda_j$ with $w_\lambda=\omega_j$. For ease of notation, we omit the dependence of the $\Lambda_j$'s and $\J$ on $N$. We also assume that $\omega_j\not= \omega_{j'}$ if $j\not= j'$. Therefore, up to some permutation of elements of $\J$, the partition \eqref{permu} is unique. Then, given  \eqref{permu}, we consider for any model $m\in\M$
$$m_j=m\cap \Lambda_j,\quad j\in\J,$$
and we set
$${\mathcal M}_j=\big\{m_j:\ m\in\M\big\},\quad j\in\J.$$
In view of Inequality~\eqref{alpha=1} and Corollary~\ref{coro:concentration}, we take
\begin{equation}\label{pen}
\pen(m):=2\e^p\sum_{j\in\J}\omega_j \p_j(m_j),
\end{equation}
with, for  some $x_{m_j}\geq 1$,
\begin{equation}\label{p1}
\p_j(m_j)=\frac{3}{2}\sigma_p^p|m_j|+\kappa_p2^{\frac{(p-2)_+}2}|m_j|^{\big(1-\frac{p}{2}\big)_+}x_{m_j}^{\frac{p}{2}}.
\end{equation}
Observe that:
$$\p_j(m_j)=\left\{
\begin{array}{lc}
\frac{3}{2}\sigma_p^p|m_j|+\kappa_p|m_j|^{1-\frac{p}{2}}x_{m_j}^{\frac{p}{2}}&\mbox{ if }p\leq 2,\\
\frac{3}{2}\sigma_p^p|m_j|+\kappa_p2^{\frac{p}{2}-1}x_{m_j}^{\frac{p}{2}}&\mbox{ if }p\geq 2.
\end{array}
\right.$$
The factor $x_{m_j}$, depending on $m_j$, will be specified later. But note that, mimicking the computations of Section~\ref{sec:model}, the thresholding rule corresponds to the case where $\p_j(m_j)$ is proportional to $|m_j|$, which is obtained for instance by taking $x_{m_j}$ proportional to $|m_j|$ when $p\leq 2$ and in this case the threshold is proportional to the noise level $\e$ as expected. However, subsequent results show that the resulting estimate is suboptimal in some situations by at least a logarithmic factor.
When $p=2$, $\pen(m)$ corresponds to the penalty extensively used to derive oracle inequalities for model selection procedures on Hilbert spaces. See, for instance, oracle inequalities established in Theorems 4.2, 4.5 and 4.18 of \cite{Massart2007}. The extension of these results to the case $p\not=2$ is provided by the following theorem.
\begin{theorem}\label{theo:oracleesp}
Let 
$p\geq 1$. We consider the estimate $\tilde\theta=\hat \theta^{(\hat m)}$ associated with the penalty defined in \eqref{pen} and $\p_j(m_j)$ given in \eqref{p1}.
  Then  
\begin{equation}\label{inegoracleesp}
\E\Big[\|\tilde \theta-\theta\|_p^p\Big]\leq 
\tilde M_{p}
\inf_{m\in{\mathcal M}}\left\{\E\Big[\|\hat \theta^{(m)}-\theta\|_p^p\Big] +\pen(m)\right\}+\breve M_{p}
 \varepsilon^p R(\M)
\end{equation}
with 
\begin{equation}\label{RM-def}
R(\M)=\sum_{j\in\J}\omega_j\sum_{m_j\in \M_j,\,{m_j\neq \emptyset}}|m_j|^{\left(1-\frac{p}{2}\right)_+}e^{-x_{m_j}},
\end{equation}
and $\tilde M_{p}$ and $\breve M_{p}$ are two constants only depending on $p$ and $\sigma_p$.
\end{theorem}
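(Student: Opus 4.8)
The plan is to start from the general oracle inequality \eqref{alpha=1} of Theorem~\ref{theo:oracle1}, applied with an arbitrary $m\in\mathcal{M}$, and then control in expectation the two residual terms $2[2V_p(\hat m)-\pen(\hat m)]$ and $-2[2V_p(m)-\pen(m)]$. Recall that $\pen$ is chosen in \eqref{pen}--\eqref{p1} precisely so that, slice by slice, it is a high-probability upper bound for $2V_p$; concretely $\pen(m)=2\e^p\sum_j \omega_j \p_j(m_j)$ while $2V_p(m)=2\e^p\sum_j \omega_j |m_j| \, (\text{empirical average of }|\xi_\lambda|^p)$. So the first step is: by Corollary~\ref{coro:concentration} applied to $\mathcal{I}=m_j$ with $x=x_{m_j}$, the event $\{\sum_{\lambda\in m_j}|\xi_\lambda|^p \le \frac32\sigma_p^p|m_j| + \kappa_p |m_j|^{(1-p/2)_+}x_{m_j}^{p/2}\}$ holds with probability at least $1-2e^{-x_{m_j}}$ — I would in fact use the slightly stronger form I need, noting the factor $2^{(p-2)_+/2}$ in \eqref{p1} gives a safety margin, which is useful because one really wants $2V_p(m_j) \le$ (something controlled by) $\pen$. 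On this event, $\p_j(m_j)\ge \sum_{\lambda\in m_j}|\xi_\lambda|^p$, hence $2V_p(m) \le \pen(m)$ deterministically on the intersection over $j$ of these events, so the fixed-$m$ term $-2[2V_p(m)-\pen(m)]$ is $\le 0$ there, and off this event I would bound it crudely by $2\pen(m)$ plus a moment bound on $V_p(m)$ (finite $p$-th moments of sub-Gaussian $\xi_\lambda$), the probability weight $e^{-x_{m_j}}\le e^{-1}$ making this a lower-order contribution absorbed into the $\pen(m)$ term of the stated inequality.

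The second and more delicate step is the term $2[2V_p(\hat m)-\pen(\hat m)]$, because $\hat m$ is random. The standard device is to pass to a supremum over the whole collection: write
$$
2V_p(\hat m)-\pen(\hat m) \le \sum_{j\in\J}\omega_j \sup_{m_j\in\mathcal{M}_j}\Big( 2\e^p\!\!\sum_{\lambda\in m_j}\!|\xi_\lambda|^p - 2\e^p\p_j(m_j)\Big)_+,
$$
and then bound the expectation of each slice supremum. For a single $m_j$, Corollary~\ref{coro:concentration} gives that $\sum_{\lambda\in m_j}|\xi_\lambda|^p$ exceeds $\p_j(m_j)$ (the upper bound in \eqref{conc-coro} with $x=x_{m_j}$) only with probability $\le 2e^{-x_{m_j}}$; more generally, by the same corollary with $x = x_{m_j}+u$ (using $(a+b)^{p/2}\le 2^{(p/2-1)_+}(a^{p/2}+b^{p/2})$ to split the deviation, which is exactly why the $2^{(p-2)_+/2}$ appears in the penalty), the overshoot beyond $\p_j(m_j)$ by an amount involving $u^{p/2}$ (when $p\le 2$, $|m_j|^{1-p/2}u^{p/2}$; when $p>2$, $u^{p/2}$) has probability $\le 2e^{-x_{m_j}-u}$. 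Integrating this tail over $u>0$ yields
$$
\E\Big(\sum_{\lambda\in m_j}|\xi_\lambda|^p - \p_j(m_j)\Big)_+ \;\lesssim_p\; |m_j|^{(1-p/2)_+} e^{-x_{m_j}},
$$
because $\int_0^\infty u^{p/2}e^{-u}\,du = \Gamma(p/2+1)$ is a $p$-dependent constant. Summing over $m_j\in\mathcal{M}_j$ (this is where the union bound is paid, hence the sum over models rather than a single term) and then over $j\in\J$, multiplying by $2\e^p\omega_j$, produces exactly $\breve M_p\,\e^p R(\mathcal{M})$ with $R(\mathcal{M})$ as in \eqref{RM-def}.

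The last step is bookkeeping: combining the two controls with \eqref{alpha=1} gives, for every $m\in\mathcal{M}$,
$$
\E\|\tilde\theta-\theta\|_p^p \le M_p\,\E\|\hat\theta^{(m)}-\theta\|_p^p + C_p\,\pen(m) + \breve M_p\,\e^p R(\mathcal{M}),
$$
and taking the infimum over $m$ yields the claim with $\tilde M_p = \max(M_p, C_p)$. One also uses that $\E\|\hat\theta^{(m)}-\theta\|_p^p = B_p(m) + \E V_p(m) = B_p(m) + \e^p\sigma_p^p|m|$, so the deterministic-$m$ quantities $V_p(m)$ that arose above are genuinely comparable to $\E\|\hat\theta^{(m)}-\theta\|_p^p + \pen(m)$, which is what legitimizes absorbing them. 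The main obstacle is the second step for $p>2$: there the deviation term in the concentration inequality is $x^{p/2}$, which is superlinear, so the naive additivity used for $p=2$ (Bernstein/chi-square) fails, and one must carefully use the subadditivity inequality $(x_{m_j}+u)^{p/2}\le 2^{p/2-1}(x_{m_j}^{p/2}+u^{p/2})$ — matched by the constant baked into \eqref{p1} — to decouple the "budget" part $x_{m_j}^{p/2}$ (which the penalty already pays for) from the "fluctuation" part $u^{p/2}$ (which integrates to a constant against $e^{-u}$). I would also need to be a little careful that the empty slice $m_j=\emptyset$ contributes nothing, matching the restriction $m_j\neq\emptyset$ in \eqref{RM-def}, and that the sub-Gaussian moment bounds controlling the off-event remainders are uniform in $\lambda$, which holds since the $\xi_\lambda$ are i.i.d.
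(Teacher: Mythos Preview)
Your overall architecture matches the paper's: start from \eqref{alpha=1}, drop the fixed-$m$ variance, and control the $\hat m$ residual by summing over all slices and models. Your Step~2 is exactly the paper's argument: bound $2V_p(\hat m)-\pen(\hat m)\le 2\e^p\sum_j\omega_j\sum_{m_j\in\M_j}[Z(m_j)-\p_j(m_j)]_+$, then for each nonempty $m_j$ integrate the tail using Corollary~\ref{coro:concentration} at level $x=x_{m_j}+v$, the inequality $(x_{m_j}+v)^{p/2}\le 2^{(p-2)_+/2}(x_{m_j}^{p/2}+v^{p/2})$ (this is precisely why the factor $2^{(p-2)_+/2}$ sits in \eqref{p1}), and $\int_0^\infty v^{p/2-1}e^{-v}\,dv<\infty$. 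That part is correct.

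Your Step~1, however, contains a sign slip and is needlessly complicated. On the event $\{2V_p(m)\le\pen(m)\}$ you get $2V_p(m)-\pen(m)\le 0$, hence $-2[2V_p(m)-\pen(m)]\ge 0$, not $\le 0$ as you wrote. No probabilistic argument is needed here at all: since $V_p(m)\ge 0$ deterministically,
\[
-2\big[2V_p(m)-\pen(m)\big]=-4V_p(m)+2\pen(m)\le 2\pen(m)
\]
always. This is what the paper does, and it immediately yields
\[
\|\tilde\theta-\theta\|_p^p\le M_p\|\hat\theta^{(m)}-\theta\|_p^p+2\pen(m)+4\e^p\sum_{j\in\J}\omega_j\sum_{m_j\in\M_j,\,m_j\neq\emptyset}\big[Z(m_j)-\p_j(m_j)\big]_+,
\]
after which your Step~2 applies verbatim and taking expectation plus infimum over $m$ finishes the proof. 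The ``safety margin'' you attribute to the factor $2^{(p-2)_+/2}$ plays no role in Step~1; its sole purpose is the decoupling in Step~2.
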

The proof of Theorem~\ref{theo:oracleesp} is provided in Section~\ref{preuveoraclesp2}.
\begin{remark}
Theorem~\ref{theo:oracleesp} remains true if we replace the equality in \eqref{pen} by the inequality:
$$\pen(m)\geq 2\e^p\sum_{j\in\J}\omega_j \p_j(m_j).$$
\end{remark}
Now let us discuss the choice of the factors $x_{m_j}$. We fix them in order 
$\tilde \theta$ to be optimal in the oracle setting,  meaning that 
\begin{equation}\label{opt-oracle}
\E\Big[\|\tilde \theta-\theta\|_p^p\Big]\lesssim \inf_{m\in{\mathcal M}}\E\Big[\|\hat \theta^{(m)}-\theta\|_p^p\Big].
\end{equation}
Now, observe that for any model $m$,
\begin{align*}
\E\Big[\|\hat \theta^{(m)}-\theta\|_p^p\Big]&=B_p(m)+\sum_{j\in\J} \E\big[V_p(m_j)\big]
=\sum_{\la \notin m}w_\lambda | \theta_{\la}|^p+\e^p\sigma_p^p\sum_{j\in\J}w_j|m_j|
\end{align*}
and $\pen(m)$ can be compared to the second term of the right hand side.
In particular, Theorem~\ref{theo:oracleesp} shows that $\tilde \theta$ is optimal as soon as we have
\begin{equation}\label{cond-exp}
|m_j|^{\left(1-\frac{p}{2}\right)_+}x_{m_j}^{\frac{p}{2}}\lesssim |m_j|\iff x_{m_j}\lesssim |m_j|^{2/\max(2,p)},\quad \text{for all } j\in\J
\end{equation}
and 
\begin{equation}\label{RM}
R(\M)<\infty.
\end{equation}
 More precisely, under \eqref{cond-exp} and \eqref{RM}, we obtain:
$$\E\Big[\|\tilde \theta-\theta\|_p^p\Big]\lesssim \inf_{m\in{\mathcal M}}\E\Big[\|\hat \theta^{(m)}-\theta\|_p^p\Big]+ \varepsilon^p,$$
which corresponds to \eqref{opt-oracle} up to the residual term $\varepsilon^p.$ Therefore, we wish to fix the factors $x_{m_j}$ so that Conditions \eqref{cond-exp} and \eqref{RM} are satisfied. Condition~\eqref{cond-exp} means that the factors cannot be too large. But the condition $R(\M)<\infty$ holds only if $\M$ is not too large or factors are large enough. In particular, to have \eqref{RM}, we need:
\begin{equation}\label{inegmj}
\sum_{m_j\in \M_j,\,{m_j\neq \emptyset}}|m_j|^{\left(1-\frac{p}{2}\right)_+}e^{-x_{m_j}}<\infty
\end{equation}
(see \eqref{RM-def}).
Given $d\geq 1$, consider situations where the number of models of size $d$ is exponential in $d$, say $\exp(cd)$ for $c>0$. When $p\leq 2$, we can choose $x_{m_j}$ satisfying \eqref{cond-exp} and such that \eqref{inegmj} holds by taking
$$x_{m_j}=\tilde c |m_j| \quad\text{ or }\quad x_{m_j}=\tilde c\log(|m_j|)$$
with $\tilde c$ large enough.
But, when $p>2$, if \eqref{cond-exp} is verified, we have, for $c_*$ a positive constant,
\begin{align*}
\sum_{m_j\in \M_j,\,{m_j\neq \emptyset}}|m_j|^{\left(1-\frac{p}{2}\right)_+}e^{-x_{m_j}}\geq\sum_{d=1}^{
D_j}\exp(cd- c_* d^{2/p}),
\end{align*}
with $D_j=\max_{m_j\in \M_j}(|m_j|)$ (assumed to be larger than 1). The right hand side becomes very large when $D_j$ is large, which occurs when the model collection is large.
Therefore, the optimality of $\tilde \theta$ for the case $p>2$ requires a modification of the penalty to deal with large collections of models. Since the elbow occurs at the value $p=2$, we use $\p_j(m_j)$ with the value $p=2$ and we introduce
\begin{equation}\label{p2}
\p_j^\#(m_j)=\frac32\sigma_2^2|m_j|+\kappa_2 x_{m_j}.
\end{equation}
We obtain the following theorem.
\begin{theorem}\label{theo:oracleesp2}
Let $q>1$ and $p\geq 2$. Let $m\in{\mathcal M}$. For  $j\in \J$ and $m_j\in{\mathcal M}_j$, we take $x_{m_j}\geq 1$ and we consider the estimate $\tilde\theta=\hat \theta^{(\hat m)}$ associated with the penalty
 \begin{equation} \label{pen2}
 \pen(m):=2\e^p\sum_{j\in\J}\omega_j \min\Big(\p_j(m_j),(2q\log N)^{\frac{p}{2}-1}\p_j^\#(m_j)\Big),
 \end{equation}
 where $\p_j(m_j)$ and $\p_j^\#(m_j)$ are defined in \eqref{p1} and \eqref{p2} respectively.
 Then  
\begin{equation}\label{inegoracleesp2}
\E\Big[\|\tilde \theta-\theta\|_p^p\Big]\leq 
\tilde M_{p}
\inf_{m\in{\mathcal M}}\left\{\E\Big[\|\hat \theta^{(m)}-\theta\|_p^p\Big] +\pen(m)\right\}+
\breve M_{p,q}
\Big(N^{1-q}\|\theta\|_p^p +\varepsilon^p R^\#(\M)\Big),
\end{equation}
with $\tilde M_{p}$ (resp. $\breve M_{p,q}$) a constant only depending on $p$ (resp. $p$, $\sigma_p$ and $q$) and
$$R^\#(\M)=N^{(1-q)/2}\sum_{\lambda\in\Lambda}w_\lambda+ (\log N)^{\frac{p}{2}-1}R(\M),$$
where $R(\M)$ is defined in \eqref{RM-def}.
\end{theorem}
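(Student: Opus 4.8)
The plan is to combine the general oracle inequality of Theorem~\ref{theo:oracle1} (in the form \eqref{alpha=1}) with a truncation device: on a high-probability event, when $p\geq 2$ one has $\sum_{\la\in\mathcal{I}}|\xi_\la|^p\leq (2q\log N)^{p/2-1}\sum_{\la\in\mathcal{I}}|\xi_\la|^2$ for every $\mathcal{I}\subset\Lambda^{(N)}$, so that the $p=2$ type penalty $\p_j^\#$ of \eqref{p2}, which is linear in $x_{m_j}$, suffices to control the $\ell_p(w)$-variance --- this is precisely what makes large collections tractable.

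Concretely, I would introduce $\Omega_N=\big\{\max_{\la\in\Lambda^{(N)}}|\xi_\la|\leq \sqrt{2q\log N}\big\}$, so that by \eqref{subgauss} and a union bound over the $N$ indices $\P(\Omega_N^c)\leq 2N^{1-q}$, and split $\E\big[\|\tilde\theta-\theta\|_p^p\big]$ according to $\Omega_N$ and $\Omega_N^c$. On $\Omega_N^c$ I would use the crude bound $\|\tilde\theta-\theta\|_p^p\leq 2^{p-1}\big(\|\tilde\theta\|_p^p+\|\theta\|_p^p\big)$ together with $\|\tilde\theta\|_p^p\leq\sum_{\la\in\Lambda^{(N)}}w_\la|Y_\la|^p\lesssim_p\|\theta\|_p^p+\e^p\sum_{\la\in\Lambda^{(N)}}w_\la|\xi_\la|^p$; taking expectations over $\Omega_N^c$, the term $\|\theta\|_p^p\,\P(\Omega_N^c)$ produces the contribution $N^{1-q}\|\theta\|_p^p$, and a Cauchy--Schwarz estimate $\E\big[|\xi_\la|^p\1_{\Omega_N^c}\big]\leq\big(\E|\xi_\la|^{2p}\big)^{1/2}\P(\Omega_N^c)^{1/2}$ produces $\e^p N^{(1-q)/2}\sum_\la w_\la$, i.e. the first term of $R^\#(\M)$.

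On $\Omega_N$, applying \eqref{alpha=1} pointwise, multiplying by $\1_{\Omega_N}$ and taking expectations, the term $-2[2V_p(m)-\pen(m)]\1_{\Omega_N}$ is bounded by $2\pen(m)$ (since $V_p(m)\geq 0$), so the core is to estimate $\E\big[(2V_p(\hat m)-\pen(\hat m))_+\1_{\Omega_N}\big]$. Writing $2V_p(\hat m)-\pen(\hat m)=2\e^p\sum_{j\in\J}\omega_j\big(\sum_{\la\in\hat m_j}|\xi_\la|^p-\mu_j(\hat m_j)\big)$ with $\mu_j(m_j)=\min\big(\p_j(m_j),(2q\log N)^{p/2-1}\p_j^\#(m_j)\big)$ and $\hat m_j\in\M_j$, I would bound the positive part slice by slice and, crucially, by a sum over the \emph{distinct} slices $m_j\in\M_j$ rather than over the models $m\in\M$, so as to avoid a multiplicity blow-up. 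The elementary inequality $\big(a-\min(A,B)\big)_+\leq (a-A)_++(a-B)_+$ then splits each slice term into two pieces: first, $\E\big[(\sum_{\la\in m_j}|\xi_\la|^p-\p_j(m_j))_+\big]\lesssim_p e^{-x_{m_j}}$, obtained from Corollary~\ref{coro:concentration} with exponent $p$ after a change of variables --- and here the factor $2^{(p-2)_+/2}$ built into \eqref{p1} is exactly what is needed to absorb the polynomial factor $x^{p/2-1}$ coming from the Jacobian and still leave a clean $e^{-x_{m_j}}$; second, on $\Omega_N$, $(\sum_{\la\in m_j}|\xi_\la|^p-(2q\log N)^{p/2-1}\p_j^\#(m_j))_+\leq (2q\log N)^{p/2-1}(\sum_{\la\in m_j}|\xi_\la|^2-\p_j^\#(m_j))_+$, whose expectation is $\lesssim (2q\log N)^{p/2-1}e^{-x_{m_j}}$ by Corollary~\ref{coro:concentration} with exponent $2$. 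Summing $\omega_j$ times these bounds over $j\in\J$ and $m_j\in\M_j$, and using $|m_j|^{(1-p/2)_+}=1$ for $p\geq 2$, yields $\E\big[(2V_p(\hat m)-\pen(\hat m))_+\1_{\Omega_N}\big]\lesssim_{p,q}\e^p(\log N)^{p/2-1}R(\M)$, the second term of $R^\#(\M)$.

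Collecting the two regimes gives, for every $m\in\M$, $\E\big[\|\tilde\theta-\theta\|_p^p\big]\leq M_p\,\E\big[\|\hat\theta^{(m)}-\theta\|_p^p\big]+2\pen(m)+\breve M_{p,q}\big(N^{1-q}\|\theta\|_p^p+\e^p R^\#(\M)\big)$; taking the infimum over $m$ and setting $\tilde M_p=\max(M_p,2)$ (with $M_p$ from Theorem~\ref{theo:oracle1}) yields \eqref{inegoracleesp2}. The step I expect to be the main obstacle is the control of the fluctuation term on $\Omega_N$: one must run the union bound over slices rather than over models, couple the truncation event $\Omega_N$ correctly with the (unconditional) concentration inequalities of Corollary~\ref{coro:concentration}, and verify that the $\p_j$-branch really contributes only $e^{-x_{m_j}}$ (this is where the $2^{(p-2)_+/2}$ factor of \eqref{p1} is used), so that the residual is precisely $(\log N)^{p/2-1}R(\M)$ rather than a larger quantity.
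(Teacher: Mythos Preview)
Your proposal is correct and follows essentially the same approach as the paper: the same truncation event $\Omega_N=\{\max_{\la\in\Lambda^{(N)}}|\xi_\la|\leq\sqrt{2q\log N}\}$, the same split of the expectation, the same slice-by-slice control of $\E\big[(Z(m_j)-\min(\p_j(m_j),(2q\log N)^{p/2-1}\p_j^\#(m_j)))_+\1_{\Omega_N}\big]$ via $(a-\min(A,B))_+\leq(a-A)_++(a-B)_+$ combined with Corollary~\ref{coro:concentration} at exponent $p$ (unconditionally) and at exponent $2$ (on $\Omega_N$), and the same Cauchy--Schwarz handling of $\Omega_N^c$. The only cosmetic difference is that on $\Omega_N^c$ the paper uses directly the exact decomposition $\|\tilde\theta-\theta\|_p^p=B_p(\hat m)+V_p(\hat m)\leq\|\theta\|_p^p+\e^p\sum_{\la\in\Lambda^{(N)}}w_\la|\xi_\la|^p$ rather than your triangle-inequality detour, which is slightly cleaner but equivalent.
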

The proof of Theorem~\ref{theo:oracleesp2} is provided in Section~\ref{preuveoraclesp3}. Unfortunately, the use of $\p_j^\#(m_j)$ requires to add the multiplicative logarithmic term $(2q\log N)^{\frac{p}{2}-1}$ which vanishes only if $p=2$.

If $\log N$ is of order of $|\log(\e)|$, by taking $q$ large enough, the residuable term of the oracle inequality is the same as in \eqref{inegoracleesp} up to the logarithmic term $|\log(\e)|^{\frac{p}{2}-1}$. Two scenarios are then of interest.
\begin{enumerate}
\item If we can take $x_{m_j}$ satisfying \eqref{cond-exp} such that $R(\M)<\infty$, we use
 $$\pen(m)\leq 2\e^p\sum_{j\in\J}\omega_j \p_j(m_j)$$
and Theorem~\ref{theo:oracleesp2} gives:
$$\E\Big[\|\tilde \theta-\theta\|_p^p\Big]\lesssim \inf_{m\in{\mathcal M}}\E\Big[\|\hat \theta^{(m)}-\theta\|_p^p\Big]+ \varepsilon^p |\log(\e)|^{\frac{p}{2}-1}.$$
\item If we can take $x_{m_j}\approx |m_j|$ such that $R(\M)<\infty$, we use
 $$\pen(m)\lesssim|\log(\e)|^{\frac{p}{2}-1}\e^p\sum_{j\in\J}\omega_j \p_j^\#(m_j)\lesssim |\log(\e)|^{\frac{p}{2}-1}\e^p\sum_{j\in\J}w_j|m_j|$$
and Theorem~\ref{theo:oracleesp2} gives:
$$\E\Big[\|\tilde \theta-\theta\|_p^p\Big]\lesssim |\log(\e)|^{\frac{p}{2}-1}\inf_{m\in{\mathcal M}}\E\Big[\|\hat \theta^{(m)}-\theta\|_p^p\Big]+ \varepsilon^p |\log(\e)|^{\frac{p}{2}-1}.$$
\end{enumerate}
The first scenario provides optimality of $\tilde \theta$ up to the residual term $\varepsilon^p |\log(\e)|^{\frac{p}{2}-1}.$ An additional logarithmic factor $|\log(\e)|^{\frac{p}{2}-1}$ is required for the main term for the second scenario. In each case, the condition $R(\M)<\infty$, depending on the chosen collection $\M$ is crucial. In the next section devoted to the minimax setting, $\M$ will be specified  in order to study optimality of $\tilde \theta$ under different regularity conditions for $\theta$.
%
\section{Minimax approach}\label{secg:minimax}
The goal of this section is to prove the optimality of our procedure. For this purpose, we consider the minimax setting. Two cases are considered. In next Section~\ref{sec:constweights}, we illustrate our results on a simple situation, namely the case of constant weights and sequences $\theta$ whose tails have a polynomial decreasing. Then, in Section~\ref{sec:minimax}, we investigate the minimax rates of convergence of our procedure on Besov bodies.
\subsection{Case of constant weights}\label{sec:constweights}
In this section, we assume that weights $w_\lambda$ do not depend on $\lambda$ and without loss of generality, we assume:
\[
w_\lambda=1,\quad \lambda\in\Lambda.
\]
We study minimax rates of convergence of our procedure when tails of $\theta$ have a polynomial decreasing. Therefore, assuming without loss of generality that $\Lambda$ is the set of positive integers denoted $\Ne^*$, we introduce for $s>0$ and $R>0$, the set ${\mathbb B}^s_p(R)$ defined by 
\[
{\mathbb B}^s_p(R):=\left\{\vartheta=(\vartheta_{\lambda})_{\lambda\in\Ne^*},\quad \sup_{k\in\Ne^*}k^s\Big(\sum_{\lambda>k} |\vartheta_\lambda|^p\Big)^{1/p}\leq R \right\}.
\]
This functional class, allowing for an easy control of the bias term, is natural in our setting. 
 In the sequel, $\theta$ is assumed to belong to ${\mathbb B}^s_p(R)$. Now, we take:
\[
\M=\Big\{\{1,\dots, k\},\quad  1\leq k\leq N\Big\}
\]
and we apply Theorem~\ref{theo:oracleesp} by plugging the value $x_m=a\log(|m|)$ (and $x_{\emptyset}=0$) in $\pen(m)$  with $a>1+\left(1-\frac{p}{2}\right)_+$. In this case, since  for each value $d\in\Ne^*$ there is only one model $m\in\M$ with cardinal $d$, we obtain:
\begin{align*}
R(\M)&=\sum_{j=1}^J\omega_j\sum_{m_j\in \M_j,\,{m_j\neq \emptyset}}|m_j|^{\left(1-\frac{p}{2}\right)_+}e^{-x_{m_j}}\\
&\leq\sum_{d=1}^{+\infty}d^{\left(1-\frac{p}{2}\right)_+}e^{-a\log(d)}<\infty.
\end{align*}
In the previous inequality, we have used that $\mathcal J=\{1\}$ and $\Lambda_1=\Lambda^{(N)}$; therefore, $m_j=m$ for any $j$ and any $m\in \mathcal M$. Theorem \ref{theo:oracleesp} gives
\begin{equation*}
\E\Big[\|\tilde \theta-\theta\|_p^p\Big]\leq 
\tilde M_{p}
\inf_{m\in{\mathcal M}}\Big\{\E\Big[\|\hat \theta^{(m)}-\theta\|_p^p\Big] +\pen(m)\Big\}+\breve M_{p}
 \varepsilon^p R(\M).
\end{equation*}
Since 
\[ 
\E\Big[\|\hat \theta^{(m)}-\theta\|_p^p\Big]=B_p(m)+\E\big[V_p(m)\big]
\]
we have
\[
\pen(m)\lesssim \e^p|m|\lesssim \E\big[V_p(m)\big]
\]
and we finally obtain:
\[
\E\Big[\|\tilde \theta-\theta\|_p^p\Big]\leq 
\tilde M_{p}'\inf_{m\in{\mathcal M}}\E\Big[\|\hat \theta^{(m)}-\theta\|_p^p\Big],
\]
for $\tilde M_{p}'$ a constant only depending on $p$. For a model $m=\{1,\dots, k\}$, 
\[
\E\big[V_p(m)\big]=\sigma_p^p k\e ^p\quad \mbox{and}\quad 
B_p(m)=\sum_{i>k} |\theta_i|^p.
\]
We obtain: 
\[
\sup_{\theta\in {\mathbb B}^s_p(R)}\E\Big[\|\tilde \theta-\theta\|_p^p\Big]\lesssim 
\inf_{1\leq k\leq N}\Big\{R^pk^{-sp}+\e^p k\Big\}\lesssim R^{\frac{p}{1+ps}}\e^{\frac{p^2s}{1+ps}}
\]
as soon as $N\geq  R^p\e^{-p}$. In particular, when $p=2$, we obtain the bound
\[
\sup_{\theta\in {\mathbb B}^s_2(R)}\E\Big[\|\tilde \theta-\theta\|_2^2\Big]\lesssim R^{\frac{2}{1+2s}}\e^{\frac{4s}{1+2s}}.
\]
In particular, the rate in the right hand side is the optimal minimax rate on the class ${\mathbb B}^s_2(R)$, see \cite{Rivoirard04}. 
\subsection{Case of non-constant weights and minimax rates on Besov bodies}\label{sec:minimax}
\subsubsection{Setting}\label{sec:setting}
In this section, we wish to consider minimax rates on the very general class of Besov bodies. The latter is naturally associated with the wavelet framework. Therefore, we naturally adapt previous notations to this framework and,  without loss of generality, we can rewrite the setting of Section~\ref{sec:model} by assuming that
$$\Lambda=\bigcup_{j\geq -1} \Big\{\{j\}\times K_j\Big\},\quad K_j=\Big\{k\in\Ne: \ 0\leq k<2^j\Big\}.$$
Such adaptations are also justified by the extensions of subsequent results to the regression framework studied in Section~\ref{sec:regression}. 
For any $j\geq 0$, we have $|K_j|=2^j$ and $|K_{-1}|=1$ since $K_{-1}=\{0\}$. Now, our statistical model writes:
\begin{equation}\label{def:model2}
Y_{jk}=\theta_{jk} + \e \xi_{jk}, \qquad j\geq -1, \ k\in K_j.
\end{equation}
In the sequel, we shall assume that the sequence $\theta=(\theta_{jk})_{(j,k)\in\Lambda}$ belongs to a Besov ball with smoothness $s$ defined, as usual, by
$$\mathcal{B}_{r,\infty}^{s}(R)
=\left\{\vartheta=(\vartheta_{jk})_{(j,k)\in\Lambda}\in{\R^{\Lambda}}:\quad \sup_{j\geq -1} 2^{j(s+\frac12-\frac1r)}\Bigg(\sum_{k\in K_j}|\vartheta_{jk}|^r \Bigg)^{1/r}\leq R \right\}$$
for $0<s<\infty$, $1\leq r<\infty$, $0<R<\infty$ and
$$\mathcal{H}^s(R)=\mathcal{B}_{\infty,\infty}^{s}(R)=\left\{\vartheta\in \R^{\Lambda}:\quad \sup_{j\geq -1,\, k\in K_j} 2^{j(s+\frac12)}|\vartheta_{jk}|\leq R \right\}.$$
The sequence $\theta$ will be estimated by using the first $N$ observations $Y_{jk}$, with $N=2^{J+1}$ for some $J\geq 0$.
 It means that 
$$\Lambda^{(N)}=\bigcup_{-1\leq j\leq J}\Lambda_j,\quad \Lambda_j=\Big\{\{j\}\times K_j\Big\}.$$
We still denote for any sequence $\vartheta$ in $\R^\Lambda$, 
\begin{equation}\label{def-normlp}
\|\vartheta\|_p^p=\sum_{j=-1}^{+\infty}\sum_{k\in K_j} w_{jk}|\vartheta_{jk}|^p=\sum_{j=-1}^{+\infty}\omega_j\sum_{k\in K_j} |\vartheta_{jk}|^p,
\end{equation}
but we assume that the weights satisfy 
\begin{equation}\label{def-weights}
 \quad w_{jk}=\omega_j=2^{j\left(\frac{p}{2}-1\right)},\quad (j,k)\in\Lambda. 
 \end{equation}
Such weights are naturally justified by the subsequent regression framework and strong relationships between the sequence norm $\|\cdot\|_p$ and the classical functional $\L_p$-norm. See Section~\ref{sec:regression}.
\subsubsection{Lower bounds}
In this section, we still consider Model \eqref{def:model2}  but we assume that the $\xi_{jk}$'s are i.i.d.  standard Gaussian variables.
The lower bound for the $\ell_p(w)$-risk is known  for $s>1/r$, see Theorems 7 and 9  of \cite{DJKP2} (by taking, using their notation, $(\sigma,p,q)=(s,r,\infty)$, $(\sigma',p',q')=(0,p,p)$ and $C=R$).

%

\begin{theorem} 
Assume that $s>1/r$ and $\e^{-2}\lesssim N$. 
Then, for $\e$ small enough, we have:
$$\inf_{\hat \theta}\sup_{\theta\in \mathcal{B}_{r,\infty}^{s}(R)}
\E\Big[\|\hat \theta-\theta\|_p^p\Big]
\geq c\begin{cases}
R^{\frac{p}{2s+1}}\e^{\frac{2ps}{2s+1}} & \text {if } r>\frac{p}{2s+1}\quad \\
R^{\frac{p}{2s+1}}\e^{\frac{2ps}{2s+1}}|\log(\e)|^{\frac{ps }{2s+1}+1} &  \text{if }    r=\frac{p}{2s+1}\\
R^{\frac{p-2}{2s+1-\frac2r}}
\big(\e^{2}|\log(\e)|\big)^{\frac{p(s-\frac{1}{r}+\frac{1}{p})}{2s+1-\frac2{r}}} & \text {if } r< \frac{p}{2s+1} \quad 
\end{cases}$$
where the  infimum is taken over all estimators, i.e. measurable functions of $(Y_{jk})_{(j,k)\in \Lambda}$ and $c$ is a positive constant depending on  $p,$ $r$ and $s$.
\end{theorem}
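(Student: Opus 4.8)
The statement is a lower bound for the minimax risk over Besov bodies $\mathcal{B}_{r,\infty}^{s}(R)$ in the weighted $\ell_p(w)$-loss, and the paper explicitly tells us it follows from Theorems 7 and 9 of \cite{DJKP2} with the dictionary $(\sigma,p,q)=(s,r,\infty)$, $(\sigma',p',q')=(0,p,p)$ and $C=R$. So the plan is essentially a \emph{reduction}: reconcile our notation and weighting convention with that of \cite{DJKP2}, then quote their result. First I would recall that \cite{DJKP2} measures risk in an unweighted sequence norm $\big(\sum_{jk}|\vartheta_{jk}|^{p'}\big)^{1/p'}$ raised to the power $p'$; our norm $\|\cdot\|_p^p=\sum_j \omega_j\sum_{k\in K_j}|\vartheta_{jk}|^p$ with $\omega_j=2^{j(p/2-1)}$ is obtained from theirs by the change of variables $\vartheta_{jk}\mapsto 2^{j(1/2-1/p)}\vartheta_{jk}$, i.e.\ by passing to the ``function-normalized'' wavelet coefficients $\vartheta_{jk}^{\mathrm f}=2^{j(1/2-1/p)}\vartheta_{jk}$; under this bijection the Besov-body constraint $\sup_j 2^{j(s+1/2-1/r)}(\sum_k|\vartheta_{jk}|^r)^{1/r}\le R$ becomes the constraint in the $(\sigma,p,q)=(s,r,\infty)$ Besov-sequence-space of \cite{DJKP2} on the rescaled coefficients, and the noise level $\e$ is preserved up to the same power of $2^j$ absorbed into the definition of the problem. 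Hence the two minimax problems are isometric and their lower bounds coincide term by term.

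Second, I would verify that the three regimes in the displayed bound are exactly the three cases of the DJKP ``maxiset/minimax over Besov balls'' theorems. The dense regime $r>p/(2s+1)$ gives the pure polynomial rate $R^{p/(2s+1)}\e^{2ps/(2s+1)}$; the boundary case $r=p/(2s+1)$ carries the extra logarithmic factor $|\log\e|^{ps/(2s+1)+1}$; and the sparse regime $r<p/(2s+1)$ gives the ``sparse'' rate $R^{(p-2)/(2s+1-2/r)}(\e^2|\log\e|)^{p(s-1/r+1/p)/(2s+1-2/r)}$. These are precisely the expressions appearing in Theorems 7 and 9 of \cite{DJKP2} once one substitutes $p'=p$, $q'=p$, $\sigma'=0$. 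The hypothesis $s>1/r$ guarantees the Besov body embeds into the space where the loss is finite (so the problem is nondegenerate), and the hypothesis $\e^{-2}\lesssim N$ ensures that restricting to the first $N=2^{J+1}$ coordinates does not truncate the ``active'' resolution levels: the optimal level $j^*$ in each regime satisfies $2^{j^*}\lesssim \e^{-2}\lesssim N$, so the lower-bound construction (which places its perturbation at levels $j\le j^*$) lives inside $\Lambda^{(N)}$, and passing from the full model to the $N$-truncated one only removes coordinates and therefore can only decrease the risk of any estimator — hence the lower bound for the truncated problem is at least that of the full one.

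Concretely the argument has only three steps: (i) state the isometry $\vartheta\mapsto(2^{j(1/2-1/p)}\vartheta_{jk})_{jk}$ between our weighted $\ell_p$ sequence model on $\mathcal{B}_{r,\infty}^s(R)$ and the DJKP sequence model with their parameters, checking both the loss and the parameter set transform correctly and the noise level is unchanged; (ii) invoke the condition $\e^{-2}\lesssim N$ to argue the truncation to $\Lambda^{(N)}$ is harmless for the lower bound, since the extremal priors are supported on levels $j$ with $2^j\lesssim \e^{-2}$; (iii) cite Theorems 7 and 9 of \cite{DJKP2} to read off the three rates, noting the constant $c$ depends only on $p,r,s$ (the $R$-dependence being explicit). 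I expect the main — though still modest — obstacle to be step (i): making the bookkeeping of the $2^{j(1/2-1/p)}$ weights fully rigorous, in particular checking that the DJKP normalization of both the Besov sequence norm and the loss functional matches ours after the rescaling, and confirming that their theorems are stated for a white-noise (rather than sample-size-$n$) sequence model so that ``$\e$'' plays exactly the role of their noise parameter; everything else is a direct quotation.
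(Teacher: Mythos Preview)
Your overall strategy --- reduce to Theorems 7 and 9 of \cite{DJKP2} --- is exactly what the paper does (indeed the paper gives no proof beyond that citation and the parameter dictionary). But your step (i) contains a genuine error, and step (ii) is addressing a non-issue.

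On step (i): the rescaling $\vartheta_{jk}\mapsto 2^{j(1/2-1/p)}\vartheta_{jk}$ you propose does \emph{not} preserve the noise level. Under it, $Y_{jk}^{\mathrm f}=\theta_{jk}^{\mathrm f}+\e\,2^{j(1/2-1/p)}\xi_{jk}$, so the noise becomes level-dependent and the transformed model is no longer the homoscedastic sequence model of \cite{DJKP2}. It also shifts the Besov exponent on the constraint set from $s$ to $s+1/p-1/r$, which is not the dictionary the paper prescribes. The correct observation is much simpler: in \cite{DJKP2} the loss is measured in a Besov sequence norm $b^{\sigma'}_{p',q'}$, and with $(\sigma',p',q')=(0,p,p)$ one has
\[
\|\vartheta\|_{b^0_{p,p}}^p=\sum_{j}2^{jp(1/2-1/p)}\sum_{k}|\vartheta_{jk}|^p=\sum_{j}2^{j(p/2-1)}\sum_{k}|\vartheta_{jk}|^p=\|\vartheta\|_{\ell_p(w)}^p,
\]
which is \emph{already} the weighted norm of the paper. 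Likewise $(\sigma,p,q)=(s,r,\infty)$ gives exactly the constraint set $\mathcal{B}^s_{r,\infty}(R)$. No coordinate change is needed; the two minimax problems are literally the same, and the citation is immediate.

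On step (ii): the infimum in the statement is over estimators that are measurable functions of the \emph{full} sequence $(Y_{jk})_{(j,k)\in\Lambda}$, so there is no truncation to $\Lambda^{(N)}$ to worry about at the lower-bound stage; the hypothesis $\e^{-2}\lesssim N$ plays no role here and is carried along only for consistency with the upper-bound setting. (Incidentally, your monotonicity argument is also in the wrong direction: restricting the observations available to the estimator can only \emph{raise} the minimax risk, not lower it.)
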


The lower bounds reveal several zones according to the sign of $r-p/(2s+1)$. The case $r<p/(2s+1)$ will be called the \textit{sparse case}. The dense case $r>p/(2s+1)$ can be decomposed into two cases: the case $r\geq p$ and the case $p/(2s+1)<r<p$ referred respectively as the \textit{homogeneous} and \textit{intermediate cases}. Finally the case $r=p/(2s+1)$ will be called the \textit{frontier case.} We refer to the zones for the upper bounds.

%
\subsubsection{Upper bounds}
\label{sec:upperbounds}
Our aim is to obtain the rate of convergence on the class $\mathcal{B}_{r,\infty}^{s}(R)$ of our estimator for some well-chosen collection of models. 
Our collection of models is the union of three sub-collections $\M^H$, $\M^{I}$ and $\M^S$, each of them being the most suitable for homogenous ($H$), intermediate ($I$) and sparse {and frontier} cases ($S$). For each sub-collection, we use a penalty  (see \eqref{pen} and \eqref{pen2}) with corresponding factor $x_{m_j}^H$, $x_{m_j}^I$ and $x_{m_j}^S$ (see below).
The selection of the sub-collection and associated penalty is performed by our procedure automatically. We denote 
$$\frak{M}=\bigcup_{a\in\{H, I, S\}} \M^a\times \{a\}$$
with $\M^a$ defined as follows (recall that $m=\cup_{-1\leq j\leq J}m_j$):

\begin{itemize}
\item strategy $a=H$: $m\in \M^H$ if there exists $L\in\{0,\dots, J\}$ such that 

for all $-1\leq j \leq J,\quad $ 
$m_j=\begin{cases}
 \Lambda_j & \text{ if } -1\leq j\leq L,\\
\emptyset& \text{ if } j>L
\end{cases}$ 

and $x^H_{m_j}=\frac{p}{2}\log|m_j|$ ($\log 0=0$); 

\item strategy $a=I$:
%
$m\in \M^I$ if there exists $L\in\{0,\dots, J\}$ such that 

{\small
for all $-1\leq j \leq J,\quad $ 
$m_j=\begin{cases}
 \Lambda_j & \text{ if } -1\leq j\leq {L-1}\\
m_{L+l} \subset \Lambda_{L+l}& \text{ if } l=j-L\geq 0 \text{ with }|m_{L+l}|=\lfloor 2^{L+l} 2^{-lp/2}(l+1)^{-3} \rfloor,
\end{cases}$}

and
$x^I_{m_j}=K|m_j|\bigg(1+\log\Big(\frac{2^j}{|m_j|}\Big)\bigg)$ with $K$ a constant only depending on $p$ (see the proof); 

\item strategy $a=S$:
$ m\in \M^S$ (full collection) if 

for all $-1\leq j \leq J,\quad $ 
$m_j=\{j\}\times E, \quad E\in \mathcal{P}(K_j),$

and  $x_{m_j}^S=(p+1)|m_j| j $. 
\end{itemize}
\color{black}
For $(m,a)\in \frak{M}$, having in mind Theorems~\ref{theo:oracleesp} and \ref{theo:oracleesp2}, denote 
\begin{equation*}\label{penalite}
\frak{pen}(m,a)=\pen^a(m)=
\begin{cases}
2\e^p\sum_{j=-1}^J\omega_j \frak{p}_j(m_j,a)
& \text{ if } p\leq 2\\
2\e^p\sum_{j=-1}^J\omega_j \min\Big(\frak{p}_j(m_j,a),(2q\log N)^{\frac{p}{2}-1}\frak{p}_j^\#(m_j,a)\Big)
& \text{ if } p> 2
\end{cases}
\end{equation*}
with $q=p+1$ and 
\begin{eqnarray*}
\frak{p}_j(m_j,a)&= &{\rm p}^a_j(m_j)=\frac32\sigma_p^p|m_j|+\kappa_p2^{\frac{(p-2)_+}2}|m_j|^{\big(1-\frac{p}{2}\big)_+}(x^a_{m_j})^{\frac{p}{2}},\\
\frak{p}_j^\#(m_j,a)&= &{\rm p}_j^{\# a}(m_j)=\frac32\sigma_2^2|m_j|+\kappa_2 x^a_{m_j}.
\end{eqnarray*}
We consider $(\hat m, \hat a)$ which minimizes over all $(m,a)\in\frak{M}$ the criterion
$$-\sum_{\la \in m}w_\lambda|Y_{\la}|^p+\frak{pen}(m,a).$$
It can also be defined by minimizing over $a$ the quantity $-\sum_{\la \in \hat m^a}w_\lambda|Y_{\la}|^p+\pen^a(\hat m^a)$ where 
$\hat m^a$ minimizes over $\M^a$ the quantity
$-\sum_{\la \in m}w_\lambda|Y_{\la}|^p+\pen^a(m)$. 
Finally our PCO estimator is $\tilde\theta=\hat\theta^{(\hat m, \hat a)}$. 
\begin{theorem}\label{rateslp}
Assume that $s>1/r$ and $R$ the radius of the Besov ball belongs to the interval $[\e,\e^{-1}]$.  We take the number of observations $N$ such that {$\left(\frac{R}{\e}\right)^2\leq N\leq\left(\frac{R}{\e}\right)^{\gamma}$}, with $\gamma\geq 2$. Then
$$\sup_{\theta\in\mathcal{B}_{r,\infty}^{s}(R)}\E\Big[\|\tilde\theta-\theta\|_p^p\Big]\leq C\left\{
\begin{array}{ccl}
R^{\frac{p}{2s+1}}\e^{\frac{2ps}{2s+1}}  &   \text{if}  &r\geq p   \\
R^{\frac{p}{2s+1}}\e^{\frac{2ps}{2s+1}}|\log(\e)|^{\frac{s(p-2)_+}{2s+1}}  &  \text{if} &   \frac{p}{2s+1}<r<p\\
R^{\frac{p}{2s+1}}\e^{\frac{2ps}{2s+1}}|\log(\e)|^{\frac{ps }{2s+1}+1} &  \text{if} &   r=\frac{p}{2s+1}\\
R^{\frac{p-2}{2s+1-\frac2r}} \big(\e^{2}|\log(\e)|\big)^{\frac{p(s-\frac{1}{r}+\frac{1}{p})}{2s+1-\frac2{r}}}  &  \text{if} &   r<\frac{p}{2s+1}
\end{array}
\right.$$
for $C$ a constant depending on $p$, $\sigma_p$, $s$, $r$ {and $\gamma$.}
\end{theorem}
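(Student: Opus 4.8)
The plan is to deduce all four cases from the oracle inequalities of Theorem~\ref{theo:oracleesp} (for $p\le 2$) and Theorem~\ref{theo:oracleesp2} (for $p>2$), applied to the global collection $\frak{M}=\bigcup_{a\in\{H,I,S\}}\M^a\times\{a\}$. Since $\tilde\theta$ minimises the penalized criterion over $\frak{M}$ and, for each strategy $a$, the penalty $\pen^a$ has exactly the form \eqref{pen} or \eqref{pen2} with the factors $x_{m_j}^a$, these theorems give
$$\E\big[\|\tilde\theta-\theta\|_p^p\big]\ \lesssim\ \inf_{a}\inf_{m\in\M^a}\Big\{B_p(m)+\e^p\sigma_p^p\sum_{j}\omega_j|m_j|+\pen^a(m)\Big\}\ +\ \text{(residuals)},$$
using $\E[\|\hat\theta^{(m)}-\theta\|_p^p]=B_p(m)+\e^p\sigma_p^p\sum_j\omega_j|m_j|$ and $\omega_j=2^{j(p/2-1)}$. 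The first step is to dispose of the residuals: $R(\M^H)<\infty$ is a geometric sum, while $R(\M^S)<\infty$ and $R(\M^I)<\infty$ follow from $\binom{2^j}{d}\le(e2^j/d)^d$ against $x_{m_j}^S=(p+1)|m_j|j$ and against $x_{m_j}^I=K|m_j|\big(1+\log(2^j/|m_j|)\big)$ for $K=K(p)$ large enough, the resulting exponential decay beating the factor $\omega_j$; one also needs $\|\theta\|_p^p\lesssim R^p$ on $\mathcal{B}_{r,\infty}^s(R)$ when $s>1/r$. Then, using $R\in[\e,\e^{-1}]$ and $(R/\e)^2\le N\le(R/\e)^\gamma$ — so $R/\e$ is bounded away from $1$, each announced rate is $\gtrsim\e^p$, and a positive power of $R/\e$ dominates any power of $\log(R/\e)\lesssim|\log\e|$ — all residuals (including, for $p>2$, $N^{1-q}\|\theta\|_p^p$ and $\e^p N^{(1-q)/2}\sum_{\lambda\in\Lambda^{(N)}}w_\lambda$ with $q=p+1$) are negligible in front of the announced rate. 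Note that, under $s>1/r$ and $r\ge 1$, the frontier and sparse zones $r\le p/(2s+1)$ are empty unless $p>2$. It then suffices, in each zone, to exhibit one model $m^\star$ in the relevant sub-collection and bound $B_p(m^\star)+\e^p\sigma_p^p\sum_j\omega_j|m^\star_j|+\pen^a(m^\star)$.

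\textbf{Homogeneous and intermediate zones.} For $r\ge p$, take $m^\star\in\M^H$ of level $L$ with $2^L\asymp(R/\e)^{2/(2s+1)}\ (\le N)$. Since $r\ge p$, $\|(\theta_{jk})_k\|_{\ell_p}\le 2^{j(1/p-1/r)}\|(\theta_{jk})_k\|_{\ell_r}$, hence $\omega_j\sum_k|\theta_{jk}|^p\le R^p2^{-jps}$ and $B_p(m^\star)\lesssim R^p2^{-Lps}$; also $\e^p\sum_{j\le L}\omega_j|K_j|\asymp\e^p2^{Lp/2}$, and since $x_{m_j}^H=\tfrac p2\log|m_j|$ one checks $\frak{p}_j(m_j,H)\lesssim|m_j|+(\log|m_j|)^{p/2}$, so $\pen^H(m^\star)\lesssim\e^p2^{Lp/2}$ \emph{with no logarithmic factor} (the $\min$ with $\frak{p}_j^\#$ is not needed). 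Optimising $R^p2^{-Lps}+\e^p2^{Lp/2}$ in $L$ gives $R^{p/(2s+1)}\e^{2ps/(2s+1)}$. For $p/(2s+1)<r<p$, take $m^\star\in\M^I$ of level $L$, keeping at each level $L+l$ the $|m^\star_{L+l}|=\lfloor 2^{L+l}2^{-lp/2}(l+1)^{-3}\rfloor$ largest coefficients (the infimum over $\M^I$ does this automatically). Then $\omega_{L+l}|m^\star_{L+l}|\asymp2^{Lp/2}(l+1)^{-3}$, so $\e^p\sum_j\omega_j|m^\star_j|\lesssim\e^p2^{Lp/2}$; a Stechkin-type bound (the $\ell_p$-norm of all but the $d$ largest entries of a vector is $\le C_{r,p}\,d^{1/p-1/r}$ times its $\ell_r$-norm, for $r<p$), applied level by level with $\big(\sum_k|\theta_{(L+l)k}|^r\big)^{1/r}\le R2^{-(L+l)(s+1/2-1/r)}$ and followed by a geometric summation in $l$ that converges exactly because $r(2s+1)>p$, gives $B_p(m^\star)\lesssim R^p2^{-Lps}$ (levels where $|m^\star_{L+l}|=0$ contribute $\omega_{L+l}\big(\sum_k|\theta_{(L+l)k}|^r\big)^{p/r}$, still $\lesssim R^p2^{-Lps}$ by $r>p/(2s+1)$). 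For $p>2$, since $x_{m_j}^I\asymp|m_j|$, the $\min$ in \eqref{pen2} selects the $\frak{p}_j^\#$ branch, so $\pen^I(m^\star)\lesssim\e^p(\log N)^{p/2-1}\sum_j\omega_j|m^\star_j|(1+l)\lesssim\e^p(\log N)^{p/2-1}2^{Lp/2}$; for $p\le 2$ there is no such factor. Optimising $R^p2^{-Lps}+\e^p(\log N)^{(p/2-1)_+}2^{Lp/2}$ in $L$, with $\log N\lesssim|\log\e|$, yields $R^{p/(2s+1)}\e^{2ps/(2s+1)}|\log\e|^{s(p-2)_+/(2s+1)}$.

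\textbf{Frontier and sparse zones ($p>2$).} Use $\M^S$ (the full collection) and the $\frak{p}_j^\#$ branch of \eqref{pen2}; since $x_{m_j}^S=(p+1)|m_j|j$, this acts as a level-dependent hard threshold $t_j$ with $t_j^p\asymp\e^p(\log N)^{p/2-1}j$. Taking $m^\star_j=\{(j,k):|\theta_{jk}|>t_j\}$, bounding the bias of the dropped coefficients at level $j$ by $\omega_jt_j^{p-r}\sum_k|\theta_{jk}|^r\le\omega_jt_j^{p-r}\big(R2^{-j(s+1/2-1/r)}\big)^r$, the penalty of the kept ones by (of order) $\omega_j|m^\star_j|t_j^p$ with $|m^\star_j|\le\min\big(2^j,(R2^{-j(s+1/2-1/r)}/t_j)^r\big)$ (Markov), and the truncation tail beyond $J$ by $\sum_{j>J}\omega_j\big(\sum_k|\theta_{jk}|^r\big)^{p/r}\lesssim R^p2^{J(p/r-1-ps)}$ with $2^J\ge(R/\e)^2$, the risk is of order $\sum_j\omega_j\min\big(2^jt_j^p,\ R^rt_j^{p-r}2^{-jr(s+1/2-1/r)}\big)$. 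This sum splits into a ``linear'' block over levels $j\le j_0$ (where $2^{j_0}\asymp(R/t)^{2/(2s+1)}$) and a ``sparse'' block over $j_0<j\le j_1$ (where $2^{j_1}\asymp(R/t)^{2/(2s+1-2/r)}$, beyond which nothing survives). In the frontier case $r=p/(2s+1)$ the sparse-block summand is constant in $j$, so its summation over the $\asymp\log N$ relevant levels produces the extra logarithm; together with $t\asymp\e\sqrt{\log N}$ at those levels and $\log N\lesssim|\log\e|$ this gives $R^{p/(2s+1)}\e^{2ps/(2s+1)}|\log\e|^{ps/(2s+1)+1}$. In the sparse case $r<p/(2s+1)$ the sparse-block summand increases in $j$, is dominated by its top level $j_1$, and — using $t_{j_1}^2\asymp\e^2|\log\e|$ — yields $R^{(p-2)/(2s+1-2/r)}\big(\e^2|\log\e|\big)^{p(s-1/r+1/p)/(2s+1-2/r)}$; the linear block is of the same order only up to a $\gamma$-dependent constant (it is comparable to $\e^p$ when $R/\e$ is bounded), which is why $C$ depends on $\gamma$. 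The exponents in both rates follow from elementary algebra in $p,r,s$, the zones being defined precisely so that the stated block dominates.

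\textbf{Main obstacle.} The homogeneous case is a direct computation; the real work is in the intermediate and frontier/sparse zones. For $\M^I$ the delicate points are the level-by-level Stechkin estimate coupled to the precise sizes $\lfloor 2^{L+l}2^{-lp/2}(l+1)^{-3}\rfloor$ (including the levels where they vanish and the truncation at $J$), together with the calibration of $x_{m_j}^I$: large enough in $K$ for $R(\M^I)<\infty$, yet only of order $|m_j|$ so that $\pen^I(m^\star)\lesssim\e^p2^{Lp/2}$ up to the unavoidable factor $(\log N)^{p/2-1}$. For $\M^S$ the obstacle is the standard but intricate Besov thresholding bookkeeping: extracting the right level-dependent threshold from $x_{m_j}^S$ and the $(2q\log N)^{p/2-1}$ prefactor, locating the dominant resolution levels $j_0,j_1$ in each zone, and pushing the geometric sums through to the announced exponents.
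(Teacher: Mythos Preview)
Your proposal is correct and follows essentially the same architecture as the paper: apply Theorems~\ref{theo:oracleesp}/\ref{theo:oracleesp2} to the global collection $\frak{M}$, check that the residuals are negligible, and then exhibit in each zone a good model in the relevant sub-collection $\M^H$, $\M^I$, $\M^S$. The homogeneous and intermediate cases match the paper almost line by line (your Stechkin-type bound is exactly the paper's Lemma~\ref{lemmeintermediaire}).

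Two minor points of divergence are worth noting. First, your claim $R(\M^I)<\infty$ is slightly off when $p<2$: the paper only gets $R(\M^I)\lesssim\log N$ in that range (the sum over $l$ runs all the way to $J-L$ with no decay), but since the residual is $\e^p R(\M)$ this is still negligible. Second, in the sparse/frontier zone the paper does not read the threshold off the penalty as you do; instead it builds an explicit three-piece oracle model $\check m$ (keep everything for $j<j_1$, hard-threshold at $\e\sqrt{j}$ for $j_1\le j\le j_0$, keep nothing for $j>j_0$) with carefully tuned cut-offs $j_0,j_1$, and then bounds $\pen^S(\check m)$ separately via the $\frak p_j^\#$ branch. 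Your approach with $t_j^p\asymp\e^p(\log N)^{p/2-1}j$ is a legitimate variant that leads to the same rates, but your assertion that ``the sparse-block summand is constant in $j$'' in the frontier case is not literally true with your $j$-dependent $t_j$ (it is true with the paper's $\e\sqrt{j}$ after the $2^{j(\cdot)}$ factors cancel, and the extra factor $j^{(p-r)/2}$ is absorbed since $j\lesssim|\log\e|$); you should track the $j$-dependence of $t_j$ through that sum to recover the announced power $\frac{ps}{2s+1}+1$.
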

We remark that the radius $R$ may decrease to 0 or increase to $+\infty$ when $\e\to 0$. The lower bound $R\geq \e$ allows to have $N\geq 1$. The upper bound $R\leq \e^{-1}$ can be replaced by any upper bound $R\leq \e^{-u}$, with $u> 0$, so we have $\log (N)\approx |\log(\e)|$. 
Note that if $R\in[R_0,R_1]$ with known constants $R_0$ and $R_1$ with $0<R_0<R_1<\infty$ then we can choose $N$ not depending on $R$.

The proof of Theorem~\ref{rateslp} is provided in Section \ref{proofupper}.
%
%
Theorem~\ref{rateslp} shows that our procedure achieves the optimal rate in the homogeneous case $r\geq p$, in the sparse case $r<p/(2s+1)$ and in the frontier case  $r=p/(2s+1)$. In the intermediate case $\frac{p}{2s+1}<r<p$, we have a logarithmic loss with exponent $\frac{s}{2s+1}(1-\frac2p)_+$ which is non-zero if and only if $p>2$. When $p\leq 2$, the rate is optimal.
To the best of our knowledge, all other adaptive  methods suffer from a logarithmic loss in at least one case, except the adaptive procedure proposed in \cite{Juditsky97} based on an involved combination of thresholding and Lepski-type procedures. In particular, \cite{DJKP2} have 
a logarithmic loss, both in homegeneous and intermediate cases, with power $\frac{s}{2s+1}$. It is also the case in  \cite{LepskiMammenSpokoiny} (for the study of the white noise model) or  \cite{KLP} (for the study of the multivariate white noise model) or \cite{GL14} (for the study of the density model).



\section{Nonparametric regression}\label{sec:regression}
In this section, we consider the following nonparametric regression model
\begin{equation}\label{model-NP}
X_i = f(t_i) +\sigma \eta_i,\qquad 1\leq i \leq n,
\end{equation}
with  $\eta_i$ some i.i.d. centered sub-Gaussian variables modelling the noise and $\sigma>0$ assumed to be known. In the previous expression, $f$ has support included into $[0,1]$ and the $t_i$'s are deterministic and equidistant : $t_i=i/n,$ $i=1,\ldots,n$. We assume in the sequel that $\log_2(n)$ is an integer. Our goal is to estimate the function $f$ by using observations $(X_i)_{i=1,\ldots,n}$. The risk of any estimate will be evaluated by using the $\L_p$-norm for some $1\leq p<\infty$. For this purpose, we shall use results of Section~\ref{sec:upperbounds} and in particular the weights \eqref{def-weights}  introduced in Section~\ref{sec:setting}.

In this setting, we consider a decomposition of the signal $f$, assumed to be squared-integrable, on a wavelet basis. The expansion of $f$ is then of the form:
\begin{equation}\label{decompowavelet}
f=\sum_k\langle f,\phi_k\rangle \phi_k+\sum_{j=0}^{+\infty}\sum_k\langle f,\psi_{jk}\rangle \psi_{jk},
\end{equation}
where $\phi_k$ is the translation of a father wavelet $\phi$ and $\psi_{jk}$ is the dilation and translation a mother wavelet $\psi$: for any $x$, we have:
$$\phi_k(x)=\phi(x-k),\quad  \psi_{jk}(x)=2^{j/2}\psi(2^jx-k).$$
The expansion~\eqref{decompowavelet} can be of course rewritten as
\begin{equation}\label{decompowavelet2}
f=\sum_{j=-1}^{+\infty}\sum_{k\in K_j}\langle f,\varphi_{jk}\rangle \varphi_{jk},
\end{equation}
with for any $(j,k)$, $\varphi_{jk}=\psi_{jk}$ if $j\geq 0$ and $\varphi_{jk}=\phi_k$ if $j=-1$. Considering the specific construction of Section~4 of \cite{CDV} to obtain an orthonormal basis of $\L^2[0,1]$, we can assume that the wavelets that generate the basis have a compact support which is then included into the interval $[A,B]$ for some $0<A<B<\infty$ and is $M+1$ times weakly differentiable,
where $M$ can be chosen by the practitioner. Finally, the construction of \cite{CDV} shows that  we can take
$$ K_j=\big\{0,1,2,\ldots, 2^j-1\big\},\quad j\geq 0$$
and $K_{-1}=\{0\}$. 

Now, as in Section~\ref{sec:minimax},  we consider $\Lambda^{(N)}$ of size $N=2^{J+1}$, for some $J\geq 0$, with
$$\Lambda^{(N)}=\bigcup_{-1\leq j\leq J}\Lambda_j,\quad \Lambda_j=\Big\{\{j\}\times K_j\Big\}$$
and we set for any $(j,k)\in\Lambda^{(N)}$,
$$Y_{jk}=\frac1n\sum_{i=1}^nX_i \varphi_{jk} (t_i), \quad \theta_{jk}=\frac1n\sum_{i=1}^nf(t_i) \varphi_{jk} (t_i),\quad \xi_{jk}=\frac1{\sqrt{n}}\sum_{i=1}^n \eta_i\varphi_{jk} (t_i).$$
Setting
$$\e=\frac{\sigma}{\sqrt{n}},$$
we have:
$$Y_{jk}=\theta_{jk}+\e\xi_{jk},\quad (j,k)\in\Lambda^{(N)},$$ 
where the $\xi_{jk}$'s are centered. We obtain the model of Section~\ref{sec:setting}, except that the $\xi_{jk}$'s are not independent
and not identically distributed. Furthermore, our target is $f$ and not $\theta=(\theta_{jk})_{(j,k)\in\Lambda^{(N)}}$. However, we consider $\tilde\theta$ the PCO estimate of Section~\ref{sec:upperbounds} (except that me modify the strategy $I$ by taking
$|m_{L+l}|=\lfloor 2^{L+l} 2^{-lp/2}(l+1)^{-3p/2} \rfloor$ when $p>2$) and we set
$$\tilde f=\sum_{(j,k)\in\Lambda^{(N)}}\tilde\theta_{jk}\varphi_{jk}$$ to estimate $f$. 
We study the upper bound of the $\Lp$-risk of $\tilde f$ on the class of Besov spaces. We refer the reader to Section~9.2 of \cite{hkpt} for the definition of Besov spaces in terms of modulus of continuity. In the sequel, we use the characterization of Besov spaces through wavelet coefficients (see Corollary~9.1 of \cite{hkpt}): Under additional mild conditions on the wavelet functions $\phi$ and $\psi$, for $1\leq r,q<\infty$  and $0<s<M+1$, 
a function $g\in \L_r$ belongs to ${\mathcal B}^s_{r,q}$ if and only if
$$\sum_{j\geq -1}2^{qj(s+\frac12-\frac1r)}\Big(\sum_{k\in K_j}\big|\langle g,\psi_{jk}\rangle\big|^r\Big)^{\frac{q}{r}}<\infty.$$
When $q=\infty$, this condition becomes
$$\sup_{j\geq -1}2^{j(s+\frac12-\frac1r)}\Big(\sum_{k\in K_j}\big|\langle g,\psi_{jk}\rangle\big|^r\Big)^{\frac{1}{r}}<\infty.$$
We shall use this sequential characterization to define the radius of a Besov ball, which allows us to use the setting and notations of Section~\ref{sec:minimax}. We obtain the following result. 
\begin{theorem}\label{ratesLp}
Let $1\leq p, r<\infty$ and $s>0$ such that $1/r<s< M+1$. We recall that $\e=\frac{\sigma}{\sqrt{n}}.$ Assume that $R$, the radius of the Besov ball, belongs to an interval $[R_0,R_1],$ with $0<R_0<R_1<+\infty$. We take the resolution level $N=2^{J+1}$ such that {$\e^{-2}\leq N \leq \e^{-\gamma},\gamma\geq 2$}. Then, we have:
$$\sup_{f\in\mathcal{B}_{r,\infty}^{s}(R)}\E\Big[\|\tilde f-f\|_{\Lp}^p\Big]\leq C\left\{
\begin{array}{ccl}
\e^{\frac{2ps}{1+2s}}  &   \text{if}  &r\geq p   \\
\e^{\frac{2ps}{1+2s}}|\log(\e)|^{\frac{s(p-2)_+}{(1+2s)}}  &  \text{if} &   \frac{p}{2s+1}<r<p\\
\e^{\frac{2ps}{2s+1}}|\log(\e)|^{\frac{ps }{2s+1}+1} &  \text{if} &   r=\frac{p}{2s+1}\\
 \big(\e^{2}|\log(\e)|\big)^{\frac{p(s-\frac{1}{r}+\frac{1}{p})}{2s+1-\frac2{r}}}  &  \text{if} &   r<\frac{p}{2s+1}
\end{array}
\right.$$
for $C$ a constant depending on $p$, $s$, $r$, $\gamma$, $R_0$, $R_1$ and the wavelet functions $\phi$ and $\psi$.
\end{theorem}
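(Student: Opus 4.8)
The proof of Theorem~\ref{ratesLp} should reduce the functional estimation problem to the sequence-model result of Theorem~\ref{rateslp}. The first step is to control the difference between the empirical coefficients $\theta_{jk}=\frac1n\sum_i f(t_i)\varphi_{jk}(t_i)$ and the true wavelet coefficients $\langle f,\varphi_{jk}\rangle$: a quadrature argument (using the smoothness and compact support of the wavelets, plus $s>1/r$, hence $s$-Hölder-type control of $f$) should show that $\theta=(\theta_{jk})$ still lies in a Besov ball $\mathcal{B}^s_{r,\infty}(R')$ with $R'$ of the same order as $R$, at least up to the resolution level $J$ with $N=2^{J+1}$, and that the truncation tail $\sum_{j>J}$ of the expansion of $f$ contributes only a negligible term thanks to $N\geq \e^{-2}$ and $s>0$. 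One must also pass from the $\ell_p(w)$-norm on coefficients, with the weights $w_{jk}=\omega_j=2^{j(p/2-1)}$ of \eqref{def-weights}, to the functional $\L_p$-norm: this is exactly where the weights were chosen, and the key inequality is a two-sided bound $\|\sum_{jk}\vartheta_{jk}\varphi_{jk}\|_{\Lp}^p\approx \sum_j 2^{j(p/2-1)}\sum_k|\vartheta_{jk}|^p = \|\vartheta\|_p^p$, valid for finitely-supported (level $\leq J$) sequences, which follows from standard properties of the CDV wavelet basis and the $p$-norm equivalence of wavelet series (see \cite{hkpt}).

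**Reduction and the noise issue.** Having identified $\|\tilde f - f\|_{\Lp}^p$ with $\|\tilde\theta-\theta\|_p^p$ up to a bias term from levels $>J$ and the quadrature error, I would then invoke Theorem~\ref{rateslp}. The obstruction is that Theorem~\ref{rateslp} was stated for i.i.d.\ (even Gaussian, for the lower bound, but only sub-Gaussian i.i.d.\ for the oracle inequalities feeding the upper bound) noise, whereas here $\xi_{jk}=\frac1{\sqrt n}\sum_i \eta_i\varphi_{jk}(t_i)$ is neither independent across $(j,k)$ nor identically distributed. However, the oracle inequalities of Theorems~\ref{theo:oracleesp} and \ref{theo:oracleesp2} ultimately rest only on the concentration inequality of Theorem~\ref{theo:concentration}, whose \emph{first} (non-i.i.d.) statement handles independent but not identically distributed variables via the Orlicz norms $b_\lambda$. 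Here the $\xi_{jk}$ are \emph{not} independent, so I would argue that (i) each $\xi_{jk}$ is still sub-Gaussian with an Orlicz norm $\|\xi_{jk}\|_{\psi_2}$ bounded by a universal constant times $\|\eta_1\|_{\psi_2}$ (since $\sum_i \varphi_{jk}(t_i)^2/n\approx 1$ by the quadrature approximation to $\|\varphi_{jk}\|_2^2=1$), and (ii) within a single resolution level $j$, the wavelets $\varphi_{jk}$ have disjoint supports (for $k$ far enough apart) so the vector $(\xi_{jk})_{k\in K_j}$ is a bounded-overlap family; more robustly, one bounds $\sum_{\lambda\in\mathcal I}|\xi_\lambda|^p$ by re-deriving the sub-Weibull concentration for this linear-in-$\eta$ structure. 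This is the technical heart of the section, and the paper signals it ("the $\xi_{jk}$'s are not independent and not identically distributed") — I expect the cleanest route is to bound $\E[\|\hat\theta^{(m)}-\theta\|_p^p]$ and the deviation terms directly using sub-Gaussianity of linear forms of $\eta$, re-running the proofs of Theorems~\ref{theo:oracleesp}/\ref{theo:oracleesp2} with the concentration input replaced by a bespoke version.

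**Assembling the rate and the small modification for $p>2$.** Once the oracle inequality is available in the regression setting, the choice of the triple collection $\frak{M}=\M^H\cup\M^I\cup\M^S$ and the corresponding factors $x^a_{m_j}$ gives, exactly as in the proof of Theorem~\ref{rateslp}, the bias-variance trade-off on $\mathcal{B}^s_{r,\infty}(R)$ in each of the four zones; the modified cardinalities $|m_{L+l}|=\lfloor 2^{L+l}2^{-lp/2}(l+1)^{-3p/2}\rfloor$ for strategy $I$ when $p>2$ are needed to make $R(\M^I)<\infty$ after the passage through the functional norm (the extra weight $2^{j(p/2-1)}$ inside $R(\M)$ shifts the summability threshold, which is why the exponent on $(l+1)$ changes from $3$ to $3p/2$). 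One then checks that, since $R\in[R_0,R_1]$ is bounded, the $R$-dependence in the rates of Theorem~\ref{rateslp} is absorbed into the constant $C$, yielding the stated bounds purely in terms of $\e=\sigma/\sqrt n$. Finally the residual terms $\varepsilon^p R^\#(\M)$ and $N^{1-q}\|\theta\|_p^p$ from Theorem~\ref{theo:oracleesp2}, together with the level-$>J$ truncation bias $\sum_{j>J}2^{j(p/2-1)}\sum_k|\langle f,\varphi_{jk}\rangle|^p\lesssim 2^{-Jsp}\lesssim \e^{2sp}$ (using $N=2^{J+1}\geq\e^{-2}$ and $s>0$, choosing $q=p+1$ and $\gamma$ so $N^{1-q}\|\theta\|_p^p$ is negligible), are all of smaller or equal order than the main rate, which closes the proof. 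The main obstacle, I reiterate, is step (ii) of the preceding paragraph: propagating the concentration machinery through the dependent, non-identically-distributed wavelet noise $\xi_{jk}$.
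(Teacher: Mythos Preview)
Your overall plan is broadly aligned with the paper's: decompose via \eqref{Decomp-LP}, control the approximation term (Lemma~\ref{lemma:approx}), show the empirical coefficients still obey a Besov-type decay (Lemma~\ref{lemma:thetabeta}), and replace the i.i.d.\ concentration by a version for the dependent $\xi_{jk}$'s. Your idea for the noise is exactly right and is what the paper does in Proposition~\ref{prop:noise}: within a level $j$, wavelets indexed by $k,k'$ with $|k-k'|>B-A$ have disjoint supports, so one partitions $K_j$ into $O(B-A)$ classes of mutually independent sub-Gaussian variables and applies the non-i.i.d.\ part of Theorem~\ref{theo:concentration} to each.

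There is, however, a genuine gap in your ``key inequality''. You assert a two-sided bound
\[
\Big\|\sum_{j,k}\vartheta_{jk}\varphi_{jk}\Big\|_{\Lp}^p\ \approx\ \sum_{j}2^{j(p/2-1)}\sum_{k}|\vartheta_{jk}|^p\ =\ \|\vartheta\|_p^p,
\]
but this is \emph{false} for $p>2$: one only has $\|g\|_{\Lp}\lesssim \|g\|_{{\mathcal B}^0_{p,\,p\wedge 2}}$ (Lemma~\ref{lp-besov}), and for $p>2$ the embedding goes the other way, $\Lp\hookrightarrow {\mathcal B}^0_{p,p}$, so the $\ell_p(w)$-norm does \emph{not} dominate the $\Lp$-norm. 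Consequently you cannot simply invoke Theorem~\ref{rateslp} on $\|\tilde\theta-\theta\|_p^p$ when $p>2$. The paper handles this by bounding instead the quantity
\[
\Bigg[\sum_{j=-1}^J\Big(\E\big[2^{j(\frac{p}{2}-1)}\sum_{k}|\tilde\theta_{jk}-\theta_{jk}|^p\big]\Big)^{2/p}\Bigg]^{p/2}
\]
coming from \eqref{lp-Lp>2} and the generalized Minkowski inequality, and then \emph{re-running} the oracle and rate analysis level by level (Section~\ref{sec:end}): since the criterion decouples across $j$, each $\hat m_j$ is optimal at its own level, and one redoes the bias/variance/penalty bounds with the outer sum over $j$ carrying exponent $2/p$ rather than $1$. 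This is precisely why strategy $I$ is modified to $|m_{L+l}|=\lfloor 2^{L+l}2^{-lp/2}(l+1)^{-3p/2}\rfloor$ for $p>2$: not to make $R(\M^I)$ finite (it already was), but to make the series $\sum_l(\ldots)^{2/p}$ converge in the new $\tilde B_p$, $\tilde V_p$, $\tilde P_p$ computations. Your proposal, as written, would close for $p\le 2$ but not for $p>2$.
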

Similarly to Theorem~\ref{rateslp}, Theorem~\ref{ratesLp} shows the optimality of our estimation procedure for the homogeneous case $r\geq p$, the frontier case    $r=\frac{p}{2s+1}$, the sparse case  $r<\frac{p}{2s+1}$ and the intermediate case $\frac{p}{2s+1}<r<p$ when $p\leq 2$.  We refer the reader to \cite{nemirovski} for corresponding lower bounds, see also \cite{DJ98} and Theorem 4 of \cite{DJKP2}. 
When $p>2$, for the intermediate case 
$\frac{p}{2s+1}<r<p$, we obtain the additional logarithmic term $|\log(\e)|^{\frac{s(p-2)}{(1+2s)}}$ similarly to Theorem~\ref{rateslp}.
 
To end this section, we give main arguments of the proof of Theorem~\ref{ratesLp}. The $\Lp$-risk of $\tilde f$ is deduced from the following control:
\begin{align}\label{Decomp-LP}
\E\Big[\|\tilde f-f\|_{\Lp}^p\Big]&\leq 2^{p-1}\left[\E\Bigg[\Big\|\sum_{(j,k)\in\Lambda^{(N)}}(\tilde\theta_{jk}-\theta_{jk})\varphi_{jk}\Big\|_{\Lp}^p\Bigg]+\Big\|\sum_{(j,k)\in\Lambda^{(N)}}\theta_{jk}\varphi_{jk}-f\Big\|_{\Lp}^p\right]
\end{align}
and the application of  Theorem~\ref{rateslp} to bound the first term. However, several additional technical arguments are needed and we have to tackle several problems:
\begin{enumerate}
\item Concentration inequalities on the $\xi_{jk}$'s, which are now not i.i.d., were essential in previous sections. So, the question is the following: Do the noise variables $\xi_{jk}$ have sufficiently nice concentration properties to apply Theorem~\ref{rateslp}? 
\item How can we control the second term of the right hand side, which corresponds to an approximation term?
\item Can we connect the first term of the right hand side of \eqref{Decomp-LP} to the $\ell_p(w)$-risk of $\tilde\theta$?
\end{enumerate}
To address the first issue, we establish in the next proposition, proved in Section~\ref{sec:proofnoiseok}, that the $\xi_{j,k}$'s satisfy a result similar to the result of 
Corollary \ref{coro:concentration}
. We use that the $\eta_i$'s are i.i.d. centered sub-Gaussian random variables.
\begin{proposition}\label{prop:noise}
For any $j$ and for any $\mathcal{I}_j\subset K_j$, we set
  $$Z_j:= \sum_{k\in \mathcal{I}_j} |\xi_{jk}|^p.$$
There exist positive constants  $c_\varphi,$ $\sigma_p$ and $\kappa'_p$ only depending on $p$ and the compactly father and mother wavelets $\phi$ and $\psi$ such that for any $x\geq 1$,
$$\P\Big(Z_j \geq  \frac{3}{2}\sigma_p^p|{\mathcal I}_{j}|+\kappa'_p|{\mathcal I}_{j}|^{\big(1-\frac{p}{2}\big)_+}x^{\frac{p}{2}}\Big)\leq {c_\varphi}e^{-x}.$$
\end{proposition}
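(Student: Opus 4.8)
The plan is to reduce the statement to a bounded number of applications of the concentration bound of Theorem~\ref{theo:concentration}, exploiting two features of the compactly supported (boundary‑corrected) wavelet basis: each $\xi_{jk}$ is uniformly sub‑Gaussian, and, at a fixed resolution level $j$, the family $(\xi_{jk})_{k\in K_j}$ is \emph{finitely dependent}. Concretely, I would prove: there is a constant $C$, depending only on $\phi,\psi$, with $\|\xi_{jk}\|_{\psi_2}\le C$ for all $(j,k)\in\Lambda^{(N)}$, and an integer $T$, again depending only on $\phi,\psi$, such that $\mathcal I_j$ can be split into $T$ pieces within each of which the $\xi_{jk}$'s are independent; combining the per‑piece tail bounds with a union bound then yields the claim with $c_\varphi=2T$.

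First I would control one coefficient. Writing $\xi_{jk}=\sum_{i=1}^n a_i\eta_i$ with $a_i=n^{-1/2}\varphi_{jk}(t_i)$, the variable $\xi_{jk}$ is centered and, as a linear combination of the independent sub‑Gaussian $\eta_i$'s, is sub‑Gaussian with $\|\xi_{jk}\|_{\psi_2}^2\lesssim\sum_i a_i^2=n^{-1}\sum_{i=1}^n\varphi_{jk}(t_i)^2$. Since $\varphi_{jk}$ is supported in an interval of length $\lesssim 2^{-j}$ and $\|\varphi_{jk}\|_\infty\lesssim 2^{j/2}$, the number of design points $t_i$ in its support is $\lesssim 2^{-j}n+1$, hence $n^{-1}\sum_i\varphi_{jk}(t_i)^2\lesssim 1+2^{j}/n$, which stays bounded over the scales $j$ entering the estimator (where $2^j\lesssim n$). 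This gives $\|\xi_{jk}\|_{\psi_2}\le C$, and therefore $b_{jk}:=\big\||\xi_{jk}|^p-\E|\xi_{jk}|^p\big\|_{\psi_{2/p}}\le C_p'$ and $\E|\xi_{jk}|^p\le\sigma_p^p$ for constants $C_p',\sigma_p$ depending only on $p,\phi,\psi$.

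Next I would use disjointness of supports. Because the wavelets have compact support, there is an integer $T$ (depending only on the supports of $\phi,\psi$, and enlarged if necessary to absorb the finitely many boundary wavelets per scale) such that, for every $j$, $\varphi_{jk}$ and $\varphi_{jk'}$ have disjoint supports whenever $k\equiv k'\!\!\pmod T$ and $k\ne k'$. Partitioning $\mathcal I_j$ into the residue classes $\mathcal I_j^{(1)},\dots,\mathcal I_j^{(T)}$, the variables $(\xi_{jk})_{k\in\mathcal I_j^{(t)}}$ are functions of disjoint blocks of the i.i.d.\ sequence $(\eta_i)$, hence independent. Applying the first (not necessarily identically distributed) bound of Theorem~\ref{theo:concentration} to $Z_j^{(t)}:=\sum_{k\in\mathcal I_j^{(t)}}|\xi_{jk}|^p$, with $\|b\|_{\ell_2}\lesssim|\mathcal I_j^{(t)}|^{1/2}$ and $\|b\|_{\ell_{1/(1-p/2)_+}}\lesssim|\mathcal I_j^{(t)}|^{(1-p/2)_+}$ from the uniform bound on $b_{jk}$, and then running the elementary argument from the proof of Corollary~\ref{coro:concentration} (splitting $\sqrt{|\mathcal I_j^{(t)}|x}$ by Young's inequality against $\E[Z_j^{(t)}]\le\sigma_p^p|\mathcal I_j^{(t)}|$ and using $x\le x^{p/2}$ when $p\ge2$, resp.\ $x\le|\mathcal I_j^{(t)}|^{1-p/2}x^{p/2}$ or $\sqrt{|\mathcal I_j^{(t)}|x}\le|\mathcal I_j^{(t)}|^{1-p/2}x^{p/2}$ when $p<2$), I would obtain, for every $x\ge1$, $\P\big(Z_j^{(t)}\ge\tfrac32\sigma_p^p|\mathcal I_j^{(t)}|+\kappa_p|\mathcal I_j^{(t)}|^{(1-p/2)_+}x^{p/2}\big)\le 2e^{-x}$.

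Finally I would recombine: since $Z_j=\sum_{t=1}^T Z_j^{(t)}$, $\sum_t|\mathcal I_j^{(t)}|=|\mathcal I_j|$ and $\sum_t|\mathcal I_j^{(t)}|^{(1-p/2)_+}\le T|\mathcal I_j|^{(1-p/2)_+}$, a union bound over $t\in\{1,\dots,T\}$ gives the statement with $\kappa_p'=T\kappa_p$ and $c_\varphi=2T$. The genuinely delicate points are not analytic but structural/bookkeeping: pinning down the integer $T$ and checking that the CDV boundary scaling functions and wavelets—being finitely many per scale and still supported on intervals of length $\lesssim2^{-j}$—neither spoil the uniform $\psi_2$‑bound nor force more than $O(1)$ overlaps; and recording that the scales used by the procedure satisfy $2^j\lesssim n$, so that $n^{-1}\sum_i\varphi_{jk}(t_i)^2$ remains bounded. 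Both follow from the construction recalled in Section~\ref{sec:regression}.
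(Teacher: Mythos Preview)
Your proposal is correct and follows essentially the same route as the paper: a uniform sub-Gaussian bound on each $\xi_{jk}$ via $n^{-1}\sum_i\varphi_{jk}(t_i)^2\lesssim 1$, a partition of $\mathcal I_j$ into a bounded number $T$ of pieces (the paper builds the partition so that any two indices in the same piece differ by more than $B-A$, which is exactly your residue-class construction), application of the non-i.i.d.\ form of Theorem~\ref{theo:concentration} together with the Corollary~\ref{coro:concentration} absorption of the $\sqrt{Dx}$ term (packaged in the paper as a separate lemma), and a union bound over the $T$ pieces. The only cosmetic difference is in the recombination: the paper uses concavity of $u\mapsto u^{(1-p/2)_+}$ to get the slightly sharper factor $T^{1-(1-p/2)_+}$ where you use $T$, but either suffices.
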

Regarding the approximation term, we can prove  the following result (see Section~\ref{sec:prooflemmaapprox} for the proof). 
\begin{lemma} \label{lemma:approx}
Assume that $f$ belongs to the Besov set ${\mathcal B}^s_{r,\infty}(R)$ with $1/r<s< M+1$. Let $\theta_{jk}=\frac1n\sum_{i=1}^nf(t_i) \varphi_{jk} (t_i)$.
 Then, if $N\geq \frac{\e^{-2}}{|\log(\e)|}$,
$$\Big\|\sum_{(j,k)\in\Lambda^{(N)}}\theta_{jk}\varphi_{jk}-f\Big\|_{\Lp}^p \leq C\left\{
\begin{array}{ccl}
 R^p\e^{\frac{2ps}{1+2s}}   &  \text{if} &   r\geq \frac{p}{2s+1},\\
   R^p \big(\e^{2}|\log(\e)|\big)^{\frac{p(s-\frac{1}{r}+\frac{1}{p})}{2s+1-\frac2{r}}} & \text{if} &   r<\frac{p}{2s+1},
\end{array}
\right.$$
for $C$ a constant depending on $\phi$, $\psi$, $s$, $r$ and $p$. 
\end{lemma}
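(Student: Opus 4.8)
The plan is to separate two effects: the bias from retaining only the first $N$ resolution levels, and the error from replacing each wavelet coefficient $\langle f,\varphi_{jk}\rangle$ by its discretized version $\theta_{jk}=\frac1n\sum_{i=1}^n f(t_i)\varphi_{jk}(t_i)$. Writing $P_Nf:=\sum_{(j,k)\in\Lambda^{(N)}}\langle f,\varphi_{jk}\rangle\varphi_{jk}$ for the $\L^2$-orthogonal projection of $f$ onto $V_{J+1}:=\mathrm{span}\{\varphi_{jk}:(j,k)\in\Lambda^{(N)}\}$, the triangle inequality in $\Lp$ gives
$$\Big\|\sum_{(j,k)\in\Lambda^{(N)}}\theta_{jk}\varphi_{jk}-f\Big\|_{\Lp}\ \leq\ \underbrace{\Big\|\sum_{(j,k)\in\Lambda^{(N)}}(\theta_{jk}-\langle f,\varphi_{jk}\rangle)\varphi_{jk}\Big\|_{\Lp}}_{\text{(quadrature error)}}\ +\ \underbrace{\big\|P_Nf-f\big\|_{\Lp}}_{\text{(truncation error)}},$$
and it suffices to bound each term, raised to the power $p$, by the right-hand side of the lemma.

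For the truncation term I would use the wavelet characterization of Besov balls recalled above (Corollary~9.1 of \cite{hkpt}), the elementary single-level $\Lp$-norm equivalence $\big\|\sum_{k\in K_j}c_{jk}\varphi_{jk}\big\|_{\Lp}^{p}\asymp \omega_j\sum_{k\in K_j}|c_{jk}|^{p}$ for compactly supported wavelets — which is exactly what makes the weights $\omega_j=2^{j(p/2-1)}$ of \eqref{def-weights} natural — and the triangle inequality summed over the levels $j>J$. Together with the Besov embedding $\mathcal{B}^{s}_{r,\infty}\hookrightarrow\mathcal{B}^{s-(1/r-1/p)_+}_{p,\infty}$ on $[0,1]$ (whose smoothness exponent is positive because $s>1/r$), this yields $\|P_Nf-f\|_{\Lp}\lesssim R\,2^{-J(s-(1/r-1/p)_+)}$. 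Inserting $N=2^{J+1}\gtrsim \e^{-2}/|\log\e|$, the $p$-th power is $\lesssim R^{p}(\e^{2}|\log\e|)^{p(s-(1/r-1/p)_+)}$, and a short comparison of exponents — again using $s>1/r$ — shows this is dominated by $R^{p}\e^{2ps/(1+2s)}$ when $r\geq p/(2s+1)$ (there $s-(1/r-1/p)_+>s/(1+2s)$, the strict inequality absorbing the logarithm) and by $R^{p}(\e^{2}|\log\e|)^{p(s-1/r+1/p)/(2s+1-2/r)}$ when $r< p/(2s+1)$ (there $2s+1-2/r>1$).

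The quadrature term is the technical heart. I would estimate $|\theta_{jk}-\langle f,\varphi_{jk}\rangle|$ level by level. At level $j$, $\varphi_{jk}(x)=2^{j/2}\psi(2^jx-k)$ is supported on an interval $I_{jk}$ of length $\asymp 2^{-j}$ carrying $\asymp 2^{-j}n$ design points — none when $2^{-j}n\lesssim1$, in which case $\theta_{jk}=0$ and $|\theta_{jk}-\langle f,\varphi_{jk}\rangle|=|\langle f,\varphi_{jk}\rangle|$ is already handled by the truncation estimate. Otherwise $\theta_{jk}-\langle f,\varphi_{jk}\rangle$ is a Riemann-sum error for $\int_{I_{jk}}f\varphi_{jk}$, which I would split by subtracting on $I_{jk}$ a local polynomial approximant $Q$ of $f$: the embedding $\mathcal{B}^{s}_{r,\infty}\hookrightarrow C^{s-1/r}$ (using $s>1/r$) gives $\|f-Q\|_{\infty,I_{jk}}\lesssim R\,2^{-j(s-1/r)}$, so the contribution of $(f-Q)\varphi_{jk}$ to both sum and integral is $\lesssim R\,2^{-j(s-1/r)}\|\varphi_{jk}\|_{1}\asymp R\,2^{-j(s-1/r+1/2)}$, while the contribution of the smooth function $Q\varphi_{jk}$ is a quadrature error of a $C^{1}$ function, of order $n^{-1}\|(Q\varphi_{jk})'\|_{L^{1}(I_{jk})}$, a quantity polynomial in $2^{j}$ with coefficients controlled by $R$. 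Raising to the power $p$, summing over $k\in K_j$ and then over $-1\le j\le J$ via the same single-level $\Lp$-equivalence, the quadrature term is bounded by a geometric sum in $j$ whose dominant (highest-level) contribution is, in the regime $N\asymp \e^{-2}$ (equivalently $2^{J}\asymp n$), of the same order as the truncation bound already controlled. Adding the two contributions gives the lemma.

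The main obstacle is this quadrature analysis, and within it the interplay between the finest retained level $J$ and the sample size $n$: when the supports of the top-level wavelets $\varphi_{Jk}$, of length $\asymp 2^{-J}$, become comparable to or smaller than the design spacing $1/n$, most of the corresponding empirical coefficients $\theta_{Jk}$ vanish and the remaining ones must be shown to be negligible rather than estimated naively as "$\asymp 2^{-J}n$ sampled values"; keeping track, across all levels and \emph{uniformly in }$p\ge1$ (the exponent $p/2-1$ in $\omega_j$ changing sign at $p=2$), of the balance between the local-polynomial term $R\,2^{-j(s-1/r+1/2)}$ and the smooth-quadrature term is what makes the bookkeeping delicate.
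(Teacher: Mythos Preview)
Your decomposition into truncation and quadrature is natural, and the truncation half is fine. The paper's own proof, however, does \emph{not} separate these effects: it invokes Proposition~2 of \cite{MR1426459} as a black box, which directly yields
\[
\Big\|\sum_{(j,k)\in\Lambda^{(N)}}\theta_{jk}\varphi_{jk}-f\Big\|_{\Lp}\ \lesssim\ R\,N^{-(s-1/r+1/p)}\qquad(p\geq r),
\]
and then performs exactly the exponent comparison you sketch (together with $\|\cdot\|_{\Lp}\lesssim\|\cdot\|_{\L_r}$ when $r\geq p$, since supports are compact). So the paper treats the quadrature issue as already solved in the literature.

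Your attempt to handle the quadrature from scratch has a genuine gap. With your splitting $f=(f-Q)+Q$ on $I_{jk}$, the $(f-Q)\varphi_{jk}$ piece gives, for each coefficient, only
\[
\big|\theta_{jk}-\langle f,\varphi_{jk}\rangle\big|\ \lesssim\ \|f-Q\|_{\infty,I_{jk}}\,\|\varphi_{jk}\|_1\ \asymp\ R\,2^{-j(s-1/r+1/2)},
\]
because you bound the difference by the sum of the two terms rather than by an honest quadrature remainder. Feeding this into your single-level $\Lp$-equivalence and summing over $j$ produces $R\sum_{j=-1}^{J}2^{-j(s-1/r)}$, a convergent series dominated by $j=0$, hence only $O(R)$ --- not $O(R\,N^{-(s-1/r+1/p)})$. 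The smooth piece fares no better: the Riemann-sum error $n^{-1}\|(Q\varphi_{jk})'\|_{1}\asymp R\,n^{-1}2^{j/2}$ gives $R\,n^{-1}\sum_{j}2^{j}\asymp R\,n^{-1}2^{J}$, which is again $O(R)$ when $2^J\asymp n$. So neither term is ``of the same order as the truncation bound already controlled''; your geometric sums are dominated at the wrong end.

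What is missing is the vanishing-moment structure of $\psi$: one needs that $\frac1n\sum_i Q(t_i)\psi_{jk}(t_i)$ is \emph{itself} small for polynomials $Q$ of degree $\leq M$ (not merely that its quadrature error is controlled), so that $\theta_{jk}$ inherits essentially the same decay as $\langle f,\psi_{jk}\rangle$. This is precisely the nontrivial content of the cited result, which also exploits that $\log_2 n$ is an integer so that the dyadic wavelet supports align with the design. Without this ingredient your bookkeeping cannot close, regardless of how carefully the two regimes $2^{-j}n\gtrsim1$ and $2^{-j}n\lesssim1$ are tracked.
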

Finally, to address the third issue, we wish to compare the $\Lp$-risk of $\tilde f$ (or rather the first terms of its decomposition) with the $\ell_p(w)$-risk of $\tilde\theta$ when weights are those of Section~\ref{sec:setting} (see \eqref{def-weights}). 
We first state the following lemma whose proof can be found in Section~\ref{sec:lp-besov}. 
\begin{lemma}\label{lp-besov}
Let $1\leq p<\infty$. For any function $g$, we have:
$$\|g\|_{\L_p}\leq C\|g\|_{{\mathcal B}^0_{p,p\wedge 2}},$$
for $C$ a constant and with
$$\|g\|_{{\mathcal B}^0_{p,p\wedge 2}}^{p\wedge 2}:=\sum_{j\geq -1}2^{j(p\wedge 2)(\frac12-\frac1p)}\Big(\sum_{k\in K_j}\big|\langle g,\varphi_{jk}\rangle\big|^p\Big)^{\frac{p\wedge 2}{p}}.$$
\end{lemma}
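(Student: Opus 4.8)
The plan is to derive the bound from the classical wavelet square‑function characterization of $\L_p$ and then to resolve the index $p\wedge 2$ by two elementary inequalities, using throughout the compact support of the father and mother wavelets. First I would fix notation: set $c_{jk}=\langle g,\varphi_{jk}\rangle$, recall the dyadic scaling $\|\varphi_{jk}\|_{\L_p}=2^{j(\frac12-\frac1p)}\|\psi\|_{\L_p}$ for $j\geq 0$ (the coarse level $j=-1$ contributing a single bounded term), and note that, the supports of $\phi$ and $\psi$ being compact, there is an integer $M_0$ depending only on $\phi$ and $\psi$ such that for every $x$ and every level $j$ at most $M_0$ of the numbers $\varphi_{jk}(x)$ are non‑zero. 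This finite‑overlap property is exactly what converts an $\L_p$‑norm at a fixed level into an $\ell_p$‑norm of the coefficients, up to a constant depending only on $M_0$, and thereby produces the weights $2^{j(\frac12-\frac1p)}$ appearing in $\|g\|_{{\mathcal B}^0_{p,p\wedge 2}}$.

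For $p=1$, where $p\wedge 2=1$, the inequality is immediate from the triangle inequality in $\L_1$: $\|g\|_{\L_1}\leq\sum_{j,k}|c_{jk}|\,\|\varphi_{jk}\|_{\L_1}=\|\psi\|_{\L_1}\,\|g\|_{{\mathcal B}^0_{1,1}}$, after adjusting the constant for the $j=-1$ term. For $1<p<\infty$ I would invoke the standard equivalence $\|g\|_{\L_p}\leq C\,\|Sg\|_{\L_p}$ with $Sg=\big(\sum_{j,k}|c_{jk}|^2|\varphi_{jk}|^2\big)^{1/2}$, valid for the regular compactly supported CDV basis used here (see Chapter~8 of \cite{hkpt}), and then estimate $\|Sg\|_{\L_p}$. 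If $p\leq 2$, so $p\wedge 2=p$, I would use $(\sum_i a_i)^{p/2}\leq\sum_i a_i^{p/2}$ with $a_i=|c_{jk}|^2|\varphi_{jk}(x)|^2$ and integrate, obtaining $\|Sg\|_{\L_p}^p\leq\sum_{j,k}|c_{jk}|^p\|\varphi_{jk}\|_{\L_p}^p=\|\psi\|_{\L_p}^p\,\|g\|_{{\mathcal B}^0_{p,p}}^p$. If $p\geq 2$, so $p\wedge 2=2$, I would write $Sg=\big(\sum_j (S_jg)^2\big)^{1/2}$ with $S_jg=\big(\sum_k|c_{jk}|^2|\varphi_{jk}|^2\big)^{1/2}$; the finite‑overlap property together with $\ell_2\hookrightarrow\ell_p$ on at most $M_0$ coordinates gives $\|S_jg\|_{\L_p}\leq C\,2^{j(\frac12-\frac1p)}\big(\sum_k|c_{jk}|^p\big)^{1/p}$, and then, since $p/2\geq 1$, Minkowski's inequality in $\L_{p/2}$ applied to $\sum_j (S_jg)^2$ yields $\|Sg\|_{\L_p}^2=\big\|\sum_j (S_jg)^2\big\|_{\L_{p/2}}\leq\sum_j\|S_jg\|_{\L_p}^2\leq C\,\|g\|_{{\mathcal B}^0_{p,2}}^2$. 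Combining the three cases gives the announced inequality.

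The only genuinely non‑elementary input is the square‑function characterization of $\L_p$ for $1<p<\infty$, which rests on the unconditionality of the wavelet basis and on Calderón–Zygmund theory; I would simply cite it rather than reprove it, and this is the one place where the hypotheses $p>1$ and the regularity of the wavelets are essential (the case $p=1$ being excluded from it, which is why it is treated separately above). The remaining points require care rather than ideas: tracking the constant $M_0$ and the dyadic scaling in the single‑level $\L_p$–$\ell_p$ norm equivalences, checking consistency at the boundary index $p=2$ (where both sub‑cases apply and the two formulas for $p\wedge 2$ coincide), and absorbing the level $j=-1$ into the overall constant.
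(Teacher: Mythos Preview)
Your proof is correct and follows essentially the same strategy as the paper: the triangle inequality for $p=1$, and for $1<p<\infty$ the square-function characterization of $\L_p$, then the subadditivity $(\sum a_i)^{p/2}\le\sum a_i^{p/2}$ when $p\le 2$ and a Minkowski-in-$\L_{p/2}$ argument on the level sums when $p\ge 2$. The one technical difference is in the $p\ge 2$ step: the paper invokes the alternative square function $\big(\sum_{j,k}|c_{jk}|^2 2^j\mathbf 1_{[2^{-j}k,2^{-j}(k+1)]}\big)^{1/2}$, whose indicators at a fixed level are \emph{disjoint}, so that $\big(\sum_k\cdots\big)^{p/2}=\sum_k(\cdots)^{p/2}$ with no extra work; you instead keep the wavelet square function and compensate with the finite-overlap bound $\|a\|_{\ell_2}\le M_0^{1/2-1/p}\|a\|_{\ell_p}$ on at most $M_0$ nonzero coordinates. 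Both routes are valid and yield the same estimate; the paper's device is slightly slicker in that it avoids tracking $M_0$, while yours stays with a single form of the square function throughout.
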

Now, we naturally distinguish two cases.
\begin{description}
\item[-] If $1\leq p\leq 2$, using Lemma~\ref{lp-besov}, we have:
\begin{eqnarray}\label{lp-Lp<2}
\hspace{-1.5cm}\E\Bigg[\Big\|\sum_{(j,k)\in\Lambda^{(N)}}(\tilde\theta_{jk}-\theta_{jk})\varphi_{jk}\Big\|_{\Lp}^p\Bigg]&
\lesssim&\E\Bigg[\Big\|\sum_{(j,k)\in\Lambda^{(N)}}(\tilde\theta_{jk}-\theta_{jk})\varphi_{jk}\Big\|_{{\mathcal B}^0_{p,p}}^p\Bigg]\nonumber\\
&\lesssim&\E\Big[ \sum_{j=-1}^J2^{j(\frac{p}{2}-1)}\sum_{k\in K_j}\big|\tilde\theta_{jk}-\theta_{jk}\big|^p\Big]\leq\E\|\tilde \theta-\theta\|_p^p,
\end{eqnarray}
with the $\|\cdot\|_p$-norm is defined in \eqref{def-normlp} with weights defined in \eqref{def-weights}.
Therefore, results of Section~\ref{sec:upperbounds} can be applied.
\item[-] If $2< p<\infty$, still using Lemma~\ref{lp-besov}, we have:
\begin{eqnarray}\label{lp-Lp>2}
\hspace{-1.5cm}\E\Bigg[\Big\|\sum_{(j,k)\in\Lambda^{(N)}}(\tilde\theta_{jk}-\theta_{jk})\varphi_{jk}\Big\|_{\Lp}^p\Bigg]&
\lesssim&\E\Bigg[\Big\|\sum_{(j,k)\in\Lambda^{(N)}}(\tilde\theta_{jk}-\theta_{jk})\varphi_{jk}\Big\|_{{\mathcal B}^0_{p,2}}^p\Bigg]\nonumber\\
&\lesssim&\E\left[\Bigg( \sum_{j= -1}^J2^{j(1-\frac{2}{p})}\Big(\sum_{k\in K_j}\big|\tilde\theta_{jk}-\theta_{jk}\big|^p\Big)^{\frac{2}{p}}\Bigg)^{\frac{p}{2}}\right]\nonumber\\
&\lesssim&\left[ \sum_{j= -1}^J\Bigg(\E\Big[2^{j(\frac{p}{2}-1)}\sum_{k\in K_j}\big|\tilde\theta_{jk}-\theta_{jk}\big|^p\Big]\Bigg)^{\frac{2}{p}}\right]^{\frac{p}{2}},
\end{eqnarray}
by using the generalized Minkowski inequality. Since we cannot insert the sum in $j$ before taking the power $2/p$, the control of the $\L_p$-risk by the  $\ell_p(w)$-one is not immediate. This last issue is addressed in Section~\ref{sec:proofTheoLp}.
\end{description}
These arguments and technical complements of Section~\ref{sec:proofTheoLp} allow to prove Theorem~\ref{ratesLp}.

%

\section{Proofs} \label{sec:proofs}
\subsection{Proofs of oracle results of Section~\ref{sec:oracle}}
\subsubsection{Technical lemmas}\label{technicallemmas}
\begin{lemma}\label{lemma:p1} Let $p\geq 1 $.  Let $K>0$ and $a,b$ two  reals such that $|a|\geq K |b| $. Then, for any $\alpha>0$, 
$$\left||a+b|^p - |a|^p-\alpha|b|^p\right|\leq C_{1p}(\alpha,K) |a|^p$$ 
and 
$$\left||a+b|^p - \alpha|a|^p-|b|^p\right|\leq C_{2p}(\alpha,K) |a|^p$$
where $C_{1p}(\alpha,K)$ and $C_{2p}(\alpha,K)$ are positive constants depending on $p,\alpha, K$ such that \\$\lim_{K\to \infty}C_{1p}(\alpha,K)=0$.
\end{lemma}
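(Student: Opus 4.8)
The plan is to reduce both inequalities to a statement about a single real variable on a compact interval, and then conclude by continuity. First, observe that if $a=0$ then the hypothesis $|a|\geq K|b|$ with $K>0$ forces $b=0$, so both left-hand sides vanish and there is nothing to prove. Hence one may assume $a\neq 0$ and divide each of the two inequalities by $|a|^p$. Setting $t=b/a$, the hypothesis reads $|t|\leq 1/K$, and the two claims become, for every such $t$,
$$\bigl|\,|1+t|^p-1-\alpha|t|^p\,\bigr|\leq C_{1p}(\alpha,K),\qquad \bigl|\,|1+t|^p-\alpha-|t|^p\,\bigr|\leq C_{2p}(\alpha,K).$$

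Introduce the functions $g(t)=|1+t|^p-1-\alpha|t|^p$ and $h(t)=|1+t|^p-\alpha-|t|^p$. Since $p\geq 1$, the map $x\mapsto|x|^p$ is continuous on $\R$, so $g$ and $h$ are continuous and therefore bounded on the compact interval $[-1/K,1/K]$. I would then simply define
$$C_{1p}(\alpha,K):=\sup_{|t|\leq 1/K}|g(t)|,\qquad C_{2p}(\alpha,K):=\sup_{|t|\leq 1/K}|h(t)|,$$
which are finite and depend only on $p,\alpha,K$; they are moreover positive, since neither $g$ nor $h$ is constant on the nondegenerate interval $[-1/K,1/K]$. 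Multiplying back by $|a|^p$ then yields the two asserted inequalities with these constants.

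It remains to verify that $C_{1p}(\alpha,K)\to 0$ as $K\to\infty$. For $K\geq 1$ one has $|t|\leq 1/K\leq 1$, hence $1+t\in[0,2]$; applying the mean value theorem to $x\mapsto|x|^p$ on $[0,2]$, where its derivative has modulus at most $p2^{p-1}$, gives $\bigl|\,|1+t|^p-1\,\bigr|\leq p2^{p-1}|t|$. Since also $\alpha|t|^p\leq\alpha|t|$ when $|t|\leq 1$, we obtain $|g(t)|\leq(p2^{p-1}+\alpha)|t|\leq(p2^{p-1}+\alpha)/K$ for all $|t|\leq 1/K$, so $C_{1p}(\alpha,K)\leq(p2^{p-1}+\alpha)/K$, which tends to $0$.

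The argument is elementary throughout; the only step needing any care is the last one, where making the dependence on $K$ quantitative requires the local Lipschitz bound for $x\mapsto|x|^p$ near $1$. This is the sole technical point, and it is immediate from the mean value theorem once attention is restricted to $|t|\leq 1/K$ with $K\geq 1$.
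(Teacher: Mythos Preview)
Your proof is correct and follows essentially the same route as the paper: reduce to $a\neq 0$, set $t=b/a$, and bound the continuous functions $g(t)=|1+t|^p-1-\alpha|t|^p$ and $h(t)=|1+t|^p-\alpha-|t|^p$ on the compact interval $[-1/K,1/K]$. The only (minor) difference is in the final step: the paper argues $C_{1p}(\alpha,K)\to 0$ directly from continuity of $g$ at $0$ together with $g(0)=0$, whereas you supply an explicit Lipschitz bound via the mean value theorem, yielding the quantitative estimate $C_{1p}(\alpha,K)\leq (p2^{p-1}+\alpha)/K$ for $K\geq 1$.
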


\begin{proof}   : 
The case $a=0$ is obvious, so we assume $a\neq 0$. Denoting $x=b/a$, it is sufficient to study the function 
$g_{\alpha}(x)=|1+x|^p-1-\alpha|x|^p$ on $[-K^{-1}, K^{-1}]$. Since it is a continuous function on a compact set,  it is bounded and we denote by $C_{1p}(\alpha,K)$ the maximum of $|g_{\alpha}|$ on $[-K^{-1}, K^{-1}]$.
Moreover $g_{\alpha}(0)=0$ so that $\lim_{K\to \infty}C_{1p}(\alpha,K)=0$. In the same way, the continuous function $|1+x|^p-\alpha-|x|^p$ is bounded on $[-K^{-1}, K^{-1}]$ and we denote by $C_{2p}(\alpha,K)$ its bound. 
\end{proof}

\begin{lemma}\label{lemma:p}
For any $p>1$, for any $\alpha>0$, there exists $C(\alpha,p)$ such that for any $x>0$ and $y>0$,
$$(x+y)^p\leq (1+\alpha)x^p+C(\alpha,p)y^p.$$
We have, when $\alpha\to 0$,
$$C(\alpha,p)\sim \Big(\frac{\alpha}{p-1}\Big)^{1-p}\to+\infty.$$
\end{lemma}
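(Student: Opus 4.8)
The plan is to reduce the two-variable inequality to a one-variable optimization by homogeneity. Since both sides are homogeneous of degree $p$ in $(x,y)$ and $y>0$, dividing through by $y^p$ and setting $t=x/y>0$, the claim is equivalent to
$$(1+t)^p\leq (1+\alpha)t^p+C(\alpha,p)\quad\text{for all }t>0,$$
so it suffices to take $C(\alpha,p)=\sup_{t\geq 0}h(t)$ where $h(t)=(1+t)^p-(1+\alpha)t^p$. This supremum is finite and positive: $h$ is continuous on $[0,\infty)$, $h(0)=1>0$, and $h(t)\to-\infty$ as $t\to\infty$ because $1+\alpha>1$; hence the maximum is attained at some interior point.

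Next I would locate the maximizer explicitly. Since $h'(t)=p\big[(1+t)^{p-1}-(1+\alpha)t^{p-1}\big]$ and the map $t\mapsto(1+1/t)^{p-1}$ is strictly decreasing from $+\infty$ to $1$ on $(0,\infty)$, the equation $h'(t)=0$ has a unique solution $t_*=\big((1+\alpha)^{1/(p-1)}-1\big)^{-1}$, with $h$ increasing on $(0,t_*)$ and decreasing on $(t_*,\infty)$, so $t_*$ is the global maximizer. Substituting $1+t_*=(1+\alpha)^{1/(p-1)}\big/\big((1+\alpha)^{1/(p-1)}-1\big)$ and simplifying the difference of the two power terms gives the closed form
$$C(\alpha,p)=h(t_*)=\frac{1+\alpha}{\big((1+\alpha)^{1/(p-1)}-1\big)^{p-1}}.$$

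Finally, for the asymptotics as $\alpha\to0$, I would use $(1+\alpha)^{1/(p-1)}-1\sim\alpha/(p-1)$, so that the denominator behaves like $\big(\alpha/(p-1)\big)^{p-1}$ while $1+\alpha\to1$; this yields $C(\alpha,p)\sim\big(\alpha/(p-1)\big)^{1-p}\to+\infty$, as announced. The only mildly delicate points are verifying that the unique critical point of $h$ is genuinely the global maximum (which follows from the monotonicity of $h'$ across its single zero) and keeping $y>0$ throughout so that the normalization $t=x/y$ is legitimate; both are routine, so no real obstacle is expected.
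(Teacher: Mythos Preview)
Your proof is correct and follows essentially the same approach as the paper: reduce by homogeneity to a one-variable problem and optimize $h(t)=(1+t)^p-(1+\alpha)t^p$. Your closed form $C(\alpha,p)=(1+\alpha)\big((1+\alpha)^{1/(p-1)}-1\big)^{1-p}$ agrees with the paper's expression $\big(1-(1+\alpha)^{-1/(p-1)}\big)^{1-p}$ after a trivial rewrite, and your asymptotic derivation is likewise correct.
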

\begin{proof}
We prove that
$$C(\alpha,p)=\frac{1}{\left(1-(1+\alpha)^{-\frac{1}{p-1}}\right)^{p-1}}$$
by studying the function
$t\longmapsto C(\alpha,p)+(1+\alpha)t^p-(t+1)^p.$
\end{proof}
\subsubsection{Proof of Theorem~\ref{theo:oracle1}}
\label{prooforacle}
Let us denote 
$$J(m)=\sum_{\la \in \Lambda\setminus m} w_\lambda|\theta_{\la}|^p +\sum_{\la \in \Lambda^{(N)}\setminus m} w_\lambda\big(\alpha\e^p|\xi_{\la}|^p-|Y_{\la}|^p\big).$$
Setting
$$C_1=\e^p\sum_{\la\in \Lambda^{(N)}} w_\lambda|\xi_{\la}|^p\quad\mbox{and}\quad C_2=\sum_{\la\in \Lambda^{(N)}} w_\lambda|Y_{\la}|^p,$$
we can write
\begin{align*}
J(m)&= B_p(m)+\alpha(C_1-V_p(m))-(C_2-\sum_{\la \in m}w_\lambda |Y_{\la}|^p)\\
&= \|\hat \theta^{(m)} - \theta \|_p^p-(1+\alpha)V_p(m)+\sum_{\la \in m} w_\lambda|Y_{\la}|^p+\alpha C_1-C_2.
\end{align*}
But the definition of $\hat m$ gives
$$-\sum_{\la \in \hat m}w_\lambda|Y_{\la}|^p+\pen(\hat m)\leq -\sum_{\la \in m}w_\lambda|Y_{\la}|^p+\pen(m).$$
Then, since $C_1$ and $C_2$ do not depend on $m$,
$$\|\hat \theta^{(\hat m)} - \theta \|_p^p-(1+\alpha)V_p(\hat m)-J(\hat m)+\pen(\hat m)\leq \|\hat \theta^{(m)} - \theta \|_p^p-(1+\alpha)V_p(m)-J(m)+\pen( m).$$
Thus $$\|\hat \theta^{(\hat m)} - \theta \|_p^p\leq \|\hat \theta^{(m)} - \theta \|_p^p+
\Big[(1+\alpha)V_p(\hat m)-\pen(\hat m)\Big]
- \Big[(1+\alpha)V_p(m)-\pen(m)\Big]+J(\hat m)-J(m)$$
and it is sufficient to control $J(\hat m)-J(m)$.
Let us denote 
$$S_{\la}=w_\lambda\big(|\theta_{\la}|^p+\alpha\e^p|\xi_{\la}|^p-|Y_{\la}|^p\big)$$
so that, with $m^c=\Lambda^{(N)}\setminus m$ and $\hat m^c=\Lambda^{(N)}\setminus \hat m$,
\begin{align*}
J(\hat m)-J(m)&=\sum_{\la \in \hat m^c} S_{\la}-\sum_{\la \in  m^c} S_{\la}\\
&=\left(\sum_{\la \in \hat m^c\cap m } S_{\la}+\sum_{\la \in \hat m^c\cap m^c } S_{\la}\right)
-\left(\sum_{\la \in  m^c \cap \hat m^c} S_{\la}+\sum_{\la \in  m^c \cap \hat m} S_{\la}\right)\\
&=\sum_{\la \in \hat m^c\cap m } S_{\la}-\sum_{\la \in  m^c \cap \hat m} S_{\la}.
\end{align*}
\paragraph{Case $p>1$.} We first deal with the second term.
We have:
\begin{align*}
-\sum_{\la \in  m^c \cap \hat m} S_{\la}&=\sum_{\la \in  m^c \cap \hat m} w_\lambda\Big[|Y_{\la}|^p-|\theta_{\la}|^p-\alpha\e^p|\xi_{\la}|^p\Big]\\
&=\sum_{\la \in  m^c \cap \hat m} w_\lambda\Big[|\theta_{\la}+\e\xi_{\la}|^p-|\theta_{\la}|^p-\alpha\e^p|\xi_{\la}|^p\Big]\\
&\leq \sum_{\la \in  m^c \cap \hat m} w_\lambda\Big[\Big(1+\frac{\alpha}{2}\Big)|\e\xi_{\la}|^p+C(\alpha/2,p)|\theta_{\la}|^p-|\theta_{\la}|^p-\alpha\e^p|\xi_{\la}|^p\Big],
\end{align*}
by using notations of Lemma~\ref{lemma:p}.
Finally, 
\begin{align*}
-\sum_{\la \in  m^c \cap \hat m} S_{\la}&\leq\Big(1-\frac{\alpha}{2}\Big)\sum_{\la \in  \hat m}w_\lambda|\e\xi_{\la}|^p+\Big(C(\alpha/2,p)-1\Big)\sum_{\la \in  m^c}w_\lambda|\theta_{\la}|^p\\
&\leq\Big(1-\frac{\alpha}{2}\Big)V_p(\hat m)+\Big(C(\alpha/2,p)-1\Big)B_p(m).
\end{align*}
We now deal with the first term, namely $\sum_{\la \in \hat m^c\cap m } S_{\la}$.  {Let $K>0$.}
\begin{enumerate}
\item We assume that $|\theta_{\la}|\geq K\e|\xi_{\la}|$. Then, applying Lemma~\ref{lemma:p1}, we have
\begin{align*}
|S_{\la}|&=w_\lambda\Big||\theta_{\la}|^p +\alpha\e^p|\xi_{\la}|^p-|Y_{\la}|^p\Big|\\
&\leq {C_{1p}(\alpha,K)} w_\lambda|\theta_{\la}|^p\\
&\leq\Big(1-\frac{\alpha}{2}\Big)w_\lambda|\theta_{\la}|^p,
\end{align*}
choosing $K\equiv K_{p,\alpha}$ large enough. 
\item We assume that $|\theta_{\la}|< K\e|\xi_{\la}|$. Then
\begin{align*}
|S_{\la}|&\leq w_\lambda\Big||\theta_{\la}|^p +\alpha\e^p|\xi_{\la}|^p-|Y_{\la}|^p\Big|\\
&\leq {C_{2p}(\alpha,K^{-1})}
w_\lambda|\e\xi_{\la}|^p.
\end{align*}
using again Lemma~\ref{lemma:p1} with $|\e\xi_{\la}|\geq K^{-1}|\theta_{\la}|$. 
\end{enumerate}
Finally,
\begin{align*}
\sum_{\la \in \hat m^c\cap m } S_{\la}&\leq\Big(1-\frac{\alpha}{2}\Big)\sum_{\la \in  \hat m^c}w_\lambda|\theta_{\la}|^p+{C_{2p}(\alpha,K_{p,\alpha}^{-1})}\sum_{\la \in  m}w_\lambda|\e\xi_{\la}|^p\\
&\leq\Big(1-\frac{\alpha}{2}\Big)B_p(\hat m)+{C_{2p}(\alpha,K_{p,\alpha}^{-1})}V_p(m).
\end{align*}
We obtain
\begin{align*}
J(\hat m)-J(m)&\leq\Big(1-\frac{\alpha}{2}\Big) \|\hat \theta^{(\hat m)} - \theta \|_p^p+C'(\alpha,p) \|\hat \theta^{(m)} - \theta \|_p^p
\end{align*}
with $C'(\alpha,p)=\max\left(C\left(\frac{\alpha}{2},p\right)-1 , {C_{2p}(\alpha,K_{p,\alpha}^{-1})}\right)$.
Thus
 $$\frac{\alpha}{2}\|\hat \theta^{(\hat m)} - \theta \|_p^p\leq (1+C'(\alpha,p))\|\hat \theta^{(m)} - \theta \|_p^p+
\left[(1+\alpha)V_p(\hat m)-\pen(\hat m)\right]
- \left[(1+\alpha)V_p(m)-\pen(m)\right]$$
and
$$\|\hat \theta^{(\hat m)} - \theta \|_p^p\leq \frac2{\alpha}(1+C'(\alpha,p))\|\hat \theta^{(m)} - \theta \|_p^p+
\frac2{\alpha}\left[(1+\alpha)V_p(\hat m)-\pen(\hat m)\right]
- \frac2{\alpha}\left[(1+\alpha)V_p(m)-\pen(m)\right].$$
Thus the result is proved with 
\begin{equation}\label{valeurdeM}
M_{p,\alpha}=\frac{2}{\alpha}\left(1+\max\left(C\left(\frac{\alpha}{2},p\right)-1 , {C_{2p}(\alpha,K_{p,\alpha}^{-1})}\right)\right)
\end{equation}
where $K_{p,\alpha}$ is such that $C_{1p}(\alpha,K_{p,\alpha})\leq 1-\alpha/2$ and 
where $C_{1p}, C_{2p}, C(.,p)$  are defined in Lemmas \ref{lemma:p1} and \ref{lemma:p}.

%
%
%

\paragraph{Case $p=1$.}  In this case 
$$S_{\la}=w_\lambda \big(|\theta_{\la}| +\alpha\e|\xi_{\la}|-|Y_{\la}|\big)\geq (\alpha -1) \e w_\lambda |\xi_{\la}| .$$
%
Note also that 
$$S_{\la}=w_\lambda \big( |\theta_{\la}|+\alpha\e|\xi_{\la}|-|\theta_{\la}+\e\xi_{\la}|\big)
\leq w_\lambda (\alpha+1)\e|\xi_{\la}| .$$
Then, if $\alpha\geq 1$,
\begin{align*}
J(\hat m)-J(m)&=\sum_{\la \in \hat m^c} S_{\la}-\sum_{\la \in  m^c} S_{\la}
 \leq \sum_{\la \in \Lambda^{(N)}} S_{\la}-\sum_{\la \in  m^c} S_{\la}=\sum_{\la \in m} S_{\la}\\
 &\leq \sum_{\la \in m}  (\alpha + 1)  w_{\la}\e|\xi_{\la}| \leq  (\alpha +1) \|\hat \theta^{(m)} - \theta \|_1
\end{align*}
and then 
$$\|\tilde \theta-\theta\|_p^p\leq (2+\alpha)\|\hat \theta^{(m)}-\theta\|_p^p +
\Big[(1+\alpha)V_p(\hat m)-\pen(\hat m)\Big]- \Big[(1+\alpha)V_p(m)-\pen(m)\Big].$$
Now, if $0<\alpha<1$,
\begin{align*}
J(\hat m)-J(m)&=\sum_{\la \in \hat m^c\cap m } S_{\la}-\sum_{\la \in  m^c \cap \hat m} S_{\la}\\
&\leq \sum_{\la \in \hat m^c\cap m }  (\alpha + 1)  w_{\la}\e|\xi_{\la}|
+\sum_{\la \in  m^c \cap \hat m}   (1-\alpha )  w_{\la}\e|\xi_{\la}|\\
& \leq  (\alpha +1) \|\hat \theta^{(m)} - \theta \|_1+(1-\alpha)\|\hat \theta^{(\hat m)} - \theta \|_1
\end{align*}
and then 
$$\alpha\|\tilde \theta-\theta\|_p^p\leq (2+\alpha)\|\hat \theta^{(m)}-\theta\|_p^p +
\Big[(1+\alpha)V_p(\hat m)-\pen(\hat m)\Big]- \Big[(1+\alpha)V_p(m)-\pen(m)\Big].$$
Here $M_{p,\alpha}=M_{1,\alpha}=\max(2+\alpha,1+2/\alpha)$.



\subsubsection{Proof of Theorem~\ref{theo:oracleesp}}
\label{preuveoraclesp2}
We only consider the case $p>1$, the case $p=1$ is similar.\\
Starting from 
Theorem~\ref{theo:oracle1}, we have:
\begin{align*}
\|\tilde \theta-\theta\|_p^p&\leq  M_{p} \|\hat \theta^{(m)}-\theta\|_p^p +2
\Big[2V_p(\hat m)-\pen(\hat m)\Big]
- 2\Big[2V_p(m)-\pen(m)\Big]\\
&\leq M_{p}  \|\hat \theta^{(m)}-\theta\|_p^p +2\pen(m)
+2\Big[2V_p(\hat m)-\pen(\hat m)\Big]\\
&\leq  M_{p} \|\hat \theta^{(m)}-\theta\|_p^p +2\pen(m)
+4\varepsilon^p\sum_{j\in\J}\omega_j\sum_{m_j\in \M_j}\Big[Z( m_j)-\p_j(m_j)\Big]_+,
\end{align*}
where $Z(m_j)=\sum_{\la\in m_j }|\xi_{\la}|^p$, and 
$$\p_j(m_j)=\frac32|m_j|\sigma_p^p+\kappa_p2^{\frac{(p-2)_+}2}|m_j|^{\big(1-\frac{p}{2}\big)_+}x_{m_j}^{p/2}.
$$
Note that  $[Z(\emptyset)-\p_j(\emptyset)]_+=[-\p_j(\emptyset)]_+=0$,  so we have:
\begin{align*}\|\tilde \theta-\theta\|_p^p
&\leq   M_{p}\|\hat \theta^{(m)}-\theta\|_p^p +2\pen(m)
+4\varepsilon^p\sum_{j\in\J}\omega_j\sum_{m_j\in \M_j,m_j\neq \emptyset}\Big[Z( m_j)-\p_j(m_j)\Big]_+.
\end{align*}
Taking the expectation yields
\begin{align*}
\E\|\tilde \theta-\theta\|_p^p
&\leq  M_{p}\E \|\hat \theta^{(m)}-\theta\|_p^p +2\pen(m)
+4\varepsilon^p R,
\end{align*}
with 
\begin{align*}
R=\sum_{j\in\J}\omega_j\sum_{\substack{m_j\in \M_j\\m_j\neq\emptyset}}\E\Big[Z( m_j)-\p_j( m_j)\Big]_{+}=\sum_{j\in\J}\omega_j\sum_{\substack{m_j\in \M_j\\m_j\neq\emptyset}}\int_{0}^\infty\P\Big(Z( m_j)-\p_j( m_j)>u\Big)du.
\end{align*}
It remains to control the term $R$.  For the next computation, we denote $C_{j}=\kappa_p|m_j|^{\left(1-p/2\right)_+}$.
Using the change of variable 
$u=C_{j}2^{\frac{(p-2)_+}2}v^{p/2}$, we have:
\begin{align*}
&\hspace{-1cm}\int_{0}^\infty\P\Big(Z( m_j)-\p_j( m_j)>u\Big)du\\
=&\int_{0}^\infty\P\Big(Z( m_j)-\frac32\sigma_p^p|m_j|-C_{j}2^{\frac{(p-2)_+}2}x_{m_
j}^{\frac{p}{2}}>u\Big) du
\\
=&\int_{0}^\infty\P\Big(Z( m_j)-\frac32\sigma_p^p|m_j|-C_{j}2^{\frac{(p-2)_+}2}x_{m_
j}^{\frac{p}{2}}>C_{j}2^{\frac{(p-2)_+}2}v^{\frac{p}{2}}\Big)
\left[C_{j}2^{\frac{(p-2)_+}2}\frac{p}2 v^{\frac{p}{2}-1}\right]dv
\\
\leq &\int_{0}^\infty\P\Big(Z( m_j)>\frac32\sigma_p^p|m_j|+ C_{j}(x_{m_j}+v)^{\frac{p}{2}}\Big)
\left[C_{j}2^{\frac{(p-2)_+}2}\frac{p}2 v^{\frac{p}{2}-1}\right]dv
\end{align*}
since  $2^{\frac{(p-2)_+}2}(a^{p/2}+b^{p/2})\geq (a+b)^{p/2}.$
Corollary~\ref{coro:concentration} gives
\begin{align*}
\int_{0}^\infty\P\Big(Z( m_j)-\p_j(m_j)>u\Big)du
\leq  & 2\int_{0}^\infty e^{-(x_{m_j}+v)}
\left[C_{j}2^{\frac{(p-2)_+}2}\frac{p}2 v^{\frac{p}{2}-1}\right]dv\\
\leq & C(p)C_{j}e^{-x_{m_j}}=C(p)\kappa_p|m_j|^{\left(1-p/2\right)_+}e^{-x_{m_j}},
\end{align*}
for $C(p)$ a constant only depending on $p$. Finally, 
$$R\leq C(p)\kappa_p\sum_{j\in\J}\omega_j\sum_{m_j\in \M_j,m_j\neq \emptyset}|m_j|^{\left(1-p/2\right)_+}e^{-x_{m_j}}.$$
\subsubsection{Proof of Theorem~\ref{theo:oracleesp2}}
\label{preuveoraclesp3}
We only consider the case $p>1$, the case $p=1$ is similar.\\
We denote for $q> 1$,
$$\Omega(N,q):=\bigcap_{\lambda\in\Lambda^{(N)}}\left\{|\xi_\lambda|\leq\sqrt{2q\log N}\right\}.$$
Recall that we assume $\P(|\xi_\lambda|>t)\leq 2e^{-t^2/2}$ so
we have $\P(\Omega(N,q))\geq 1-2N^{1-q}.$ 
Now,
\begin{align*}
\E\Big[\|\tilde \theta-\theta\|_p^p\Big]&=\E\Big[\|\tilde \theta-\theta\|_p^p\1_{\Omega(N,q)}\Big]+\E\Big[\|\tilde \theta-\theta\|_p^p\1_{\Omega(N,q)^c}\Big].
\end{align*}
First,
\begin{align*}
\E\Big[\|\tilde \theta-\theta\|_p^p\1_{\Omega(N,q)^c}\Big]&=\E\Big[\sum_{\la \notin \hat m}w_\lambda | \theta_{\la}|^p\1_{\Omega(N,q)^c}\Big]+\e^p\E\Big[\sum_{\la \in \hat m}w_\lambda |\xi_{\la}|^p\1_{\Omega(N,q)^c}\Big]\\
&\leq 2N^{1-q}\|\theta\|_p^p+ \e^p\sqrt{2}N^{(1-q)/2}\sigma_{2p}^p\sum_{\lambda\in\Lambda}w_\lambda.
\end{align*}
Then, starting from  Theorem~\ref{theo:oracle1}, we first have on $\Omega(N,q)$, for any $m\in\M$,
\begin{align*}
\|\tilde \theta-\theta\|_p^p&\leq M_p\|\hat \theta^{(m)}-\theta\|_p^p +2
\Big[2V_p(\hat m)-\pen(\hat m)\Big]
- 2\Big[2V_p(m)-\pen(m)\Big]\\
&\leq  M_p \|\hat \theta^{(m)}-\theta\|_p^p +2\pen(m)
+2\Big[2V_p(\hat m)-\pen(\hat m)\Big]\\
&\leq   M_p\|\hat \theta^{(m)}-\theta\|_p^p +2\pen(m)
+4\varepsilon^p\sum_{j\in\J}\omega_j\sum_{m_j\in \M_j}\Big[Z( m_j)-P_j( m_j)\Big]_+,
\end{align*}
where $Z(m_j)=\sum_{\la\in m_j }|\xi_{\la}|^p$, and 
$$P_j(m_j)=\min\Big(\p_j^1(m_j);\p_j^2(m_j)\Big)$$
with 
$$\p_j^1(m_j)=\p_j(m_j),\quad \p_j^2(m_j)=(2q\log N)^{\frac{p}{2}-1}\p_j^\#(m_j).$$
Then, taking the expectation 
\begin{align*}
\E\Big[\|\tilde \theta-\theta\|_p^p\1_{\Omega(N,q)}\Big]
&\leq M_{p} \E \Big[\|\hat \theta^{(m)}-\theta\|_p^p\1_{\Omega(N,q)}\Big] +2\pen(m)
+4\varepsilon^p R
\end{align*}
with 
$$R=\sum_{j\in\J}\omega_j\sum_{\substack{m_j\in \M_j\\m_j\neq\emptyset}}\E\Big[\big(Z( m_j)-P_j( m_j)\big)_{+}\1_{\Omega(N,q)}\Big].$$
It remains to control the term $R$. We have:
\begin{align*}
R&=\sum_{j\in\J}\omega_j\sum_{\substack{m_j\in \M_j\\m_j\neq\emptyset}}\int_{0}^\infty\P\Big(Z( m_j)\1_{\Omega(N,q)}-\min\big(\p_j^1(m_j);\p_j^2(m_j)\big)\1_{\Omega(N,q)}>u\Big)du\\
&\leq\sum_{j\in\J}\omega_j\sum_{\substack{m_j\in \M_j\\m_j\neq\emptyset}}\Bigg[\int_{0}^\infty\P\Big(Z( m_j)-\p_j^1(m_j)>u\Big)du\\
&\hspace{4cm}+\int_{0}^\infty\P\Big(\big\{Z( m_j)-\p_j^2(m_j)>u\big\}\cap \Omega(N,q)\Big)du\Bigg].
\end{align*}
With the same computation as in the proof of Theorem~\ref{theo:oracleesp}, we have 
\begin{align*}
\int_{0}^\infty\P\Big(Z( m_j)-\p_j^1(m_j)>u\Big)du
\leq & C(p)|m_j|^{\left(1-p/2\right)_+}e^{-x_{m_j}}.
\end{align*}
On $\Omega(N,q)$, for $p\geq 2$,
\begin{align*}
Z( m_j)-\p_j^2(m_j)&=\sum_{\la\in m_j }|\xi_{\la}|^p-(2q\log N)^{\frac{p}{2}-1}\p_j^\#(m_j)\\
&\leq \big(\sqrt{2q\log N}\big)^{p-2}\sum_{\la\in m_j }\xi_{\la}^2-(2q\log N)^{\frac{p}{2}-1}\Big(\frac32\sigma_2^2|m_j|+\kappa_2x_{m_j}\Big)\\
&\leq (2q\log N)^{\frac{p}{2}-1}\Big(\sum_{\la\in m_j }\xi_{\la}^2-\Big(\frac32\sigma_2^2|m_j|+\kappa_2x_{m_j}\Big)\Big).
\end{align*}
Therefore,
\begin{align*}
\int_{0}^\infty\P\Big(\big\{Z( m_j)-\p_j^2(m_j)>u\big\}\cap \Omega(N,q)\Big)du\
\leq  & (2q\log N)^{\frac{p}{2}-1}C(2)e^{-x_{m_j}}.
\end{align*}
We obtain, for $p\geq 2$, 
$$R\leq \max\big(C(p);C(2)\big)\sum_{j\in\J}\omega_j\sum_{\substack{m_j\in \M_j\\m_j\neq\emptyset}}\Big[1+(2q\log N)^{\frac{p}{2}-1}\Big]e^{-x_{m_j}}.$$
Finally, for $\breve M_{p,q}$ a constant only depending on $p$ and $q$, we have:
\begin{align*}
\E\Big[\|\tilde \theta-\theta\|_p^p\Big]&\leq  M_{p}\E \Big[\|\hat \theta^{(m)}-\theta\|_p^p\Big] +2\pen(m)+\breve M_{p,q}\Big(N^{1-q}\|\theta\|_p^p +\varepsilon^p R(\M)\Big),
\end{align*}
with
$$R(\M)=N^{(1-q)/2}\sum_{\lambda\in\Lambda}w_\lambda+ (\log N)^{\frac{p}{2}-1}\sum_{j\in\J}\omega_j\sum_{\substack{m_j\in \M_j\\m_j\neq\emptyset}}e^{-x_{m_j}}.$$

\subsection{Proof of Theorem~\ref{rateslp}}\label{proofupper}
We first prove that $\|\theta\|_p<\infty$.
\begin{lemma}\label{normetheta}
Assume that $\theta\in\mathcal{B}_{r,\infty}^{s}(R)$ with $s>\frac{1}{r}$. Then, there exists $C$, only depending on $s$, $r$ and $p$, such that
\[
\|\theta\|_p^p\leq CR^p.
\]
\end{lemma}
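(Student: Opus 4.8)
The plan is to decompose the weighted norm into dyadic levels and estimate each level directly from the Besov constraint. By \eqref{def-normlp} and the choice of weights \eqref{def-weights}, $\|\theta\|_p^p = \sum_{j\geq -1} 2^{j(p/2-1)}\sum_{k\in K_j}|\theta_{jk}|^p$, while membership $\theta\in\mathcal{B}^s_{r,\infty}(R)$ gives, for every $j\geq -1$, the bound $\big(\sum_{k\in K_j}|\theta_{jk}|^r\big)^{1/r}\leq R\,2^{-j(s+1/2-1/r)}$. So the whole matter reduces to controlling the $\ell_p$-content of a level by its $\ell_r$-content and then summing a geometric series in $j$.

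I would split into two regimes. If $p\geq r$, the sequence embedding $\ell_r\hookrightarrow\ell_p$ on each (finite) level gives $\sum_{k\in K_j}|\theta_{jk}|^p\leq\big(\sum_{k\in K_j}|\theta_{jk}|^r\big)^{p/r}$, so that the $j$-th term of $\|\theta\|_p^p$ is at most $R^p\,2^{-j(1+p(s-1/r))}$, which decays geometrically because $s>1/r$. If $p<r$, Hölder's inequality with conjugate exponents $r/p$ and $r/(r-p)$ over $K_j$ (of cardinality $2^j$ for $j\geq 0$) yields $\sum_{k\in K_j}|\theta_{jk}|^p\leq 2^{j(1-p/r)}\big(\sum_{k\in K_j}|\theta_{jk}|^r\big)^{p/r}$, and after inserting the weight $2^{j(p/2-1)}$ and the Besov bound the $j$-th term becomes at most $R^p\,2^{-jps}$, which again decays geometrically since $s>0$. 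In both cases the exponent of $2^j$ is strictly negative, hence $\sum_{j\geq 0}$ of these terms is bounded by $C R^p$ with $C$ depending only on $p,s,r$; the single coarsest level $j=-1$, where $|K_{-1}|=1$, contributes the separate finite term $2^{1-p/2}|\theta_{-1,0}|^p\leq 2^{1-p/2}R^p\,2^{p(s+1/2-1/r)}$. Summing these two contributions gives $\|\theta\|_p^p\leq CR^p$.

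This is a short and essentially routine argument; the only points requiring care are the bookkeeping of the exponents of $2^j$—checking that they are negative is exactly where the hypothesis $s>1/r$ enters (in the regime $p\geq r$)—and the coarsest level $j=-1$, which must be handled as an isolated finite term because there $|K_{-1}|=1$ rather than $2^{j}$, so the Hölder cardinality bound does not apply verbatim.
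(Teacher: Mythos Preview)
Your proof is correct and follows essentially the same approach as the paper's own proof: split into the cases $p\geq r$ (using $\ell_r\hookrightarrow\ell_p$) and $p<r$ (using H\"older with the cardinality of $K_j$), then sum the resulting geometric series in $j$. You are in fact slightly more careful than the paper in singling out the $j=-1$ level, where $|K_{-1}|=1$ rather than $2^j$.
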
 
\begin{proof}
Let $j\geq -1$ be fixed. Assume first that $p\geq r$. Then
\begin{align*}
\Bigg(\sum_{k\in K_j}|\theta_{jk}|^p \Bigg)^{\frac1p}&\leq \Bigg(\sum_{k\in K_j}|\theta_{jk}|^r \Bigg)^{\frac1r}\leq R2^{-j(s+\frac12-\frac1r)}.
\end{align*}
Therefore,
\begin{align*}
2^{j\left(\frac{p}{2}-1\right)}\sum_{k\in K_j}|\theta_{jk}|^p&\leq R^p2^{j(\frac{p}{2}-1-ps-\frac{p}{2}+\frac{p}{r})}\leq R^p2^{j(-1-ps+\frac{p}{r})}\leq R^p2^{-j},
\end{align*}
since $s>\frac{1}{r}$. Now, assume that $p< r$. H\"{o}lder's inequality implies that
\[
\sum_{k\in K_j}|\theta_{jk}|^p\leq \Bigg(\sum_{k\in K_j}|\theta_{jk}|^r \Bigg)^{\frac{p}{r}}2^{j\left(1-\frac{p}{r}\right)}
\]
and
\begin{align*}
2^{j\left(\frac{p}{2}-1\right)}\sum_{k\in K_j}|\theta_{jk}|^p&\leq R^p2^{j(\frac{p}{2}-1-ps-\frac{p}{2}+\frac{p}{r}+1-\frac{p}{r})}\leq R^p2^{-jsp}.
\end{align*}
Finally, in both cases,
\[
\|\theta\|_p^p=\sum_{j=-1}^{+\infty}2^{j\left(\frac{p}{2}-1\right)}\sum_{k\in K_j} |\theta_{jk}|^p\leq CR^p,
\]
with $C$ only depending on $s$, $r$ and $p$.
\end{proof}

We distinguish the cases $p\leq 2$ and $p> 2$. When
 $p\leq 2$, 
we apply Theorem~\ref{theo:oracleesp} to the collection $\frak{M}$ and penalty $\frak{pen}$ and we obtain
\begin{equation}\label{RMM}
\E\|\hat \theta^{(\hat m^{\hat a})}-\theta\|_p^p\leq 
\tilde M_{p}
\inf_{(m,a)\in{\mathfrak M}}\left\{\E\|\hat \theta^{(m)}-\theta\|_p^p +\frak{pen}(m,a)\right\}+
\breve M_{p}\varepsilon^p R(\frak{M}).
\end{equation}
 When $p> 2$, we apply Theorem~\ref{theo:oracleesp2} to the collection $\frak{M}$ and penalty $\frak{pen}$. This gives
\begin{equation*}
\E\|\hat \theta^{(\hat m^{\hat a})}-\theta\|_p^p\leq 
\tilde M_{p}
\inf_{(m,a)\in{\mathfrak M}}\left\{\E\|\hat \theta^{(m)}-\theta\|_p^p +\frak{pen}(m,a)\right\}+
\breve M_{p,q}
\Big(N^{1-q}\|\theta\|_p^p +\varepsilon^p R^\#(\frak{M})\Big),
\end{equation*}
with
$R^\#(\mathfrak{M})=N^{(1-q)/2}\sum_{\lambda\in\Lambda}w_\lambda+ (\log N)^{\frac{p}{2}-1}R(\mathfrak{M}).$\\

Let us first analyse the remaining term $N^{1-q}\|\theta\|_p^p +\varepsilon^p R^\#(\frak{M})$.
In both cases observe that 
$$R(\frak{M})=\sum_{j=-1}^J\omega_j\sum_{\substack{(m_j,a)\in \M_j\times\{H,I,S\}\\ m_j\neq\emptyset}}|m_j|^{\left(1-\frac{p}{2}\right)_+}e^{-x^a_{m_j}}=R(\M^H)+R(\M^I)+R(\M^S).$$
We will prove in the following Sections~\ref{proofupperhomogeneous}, \ref{proofupperintermediate} , \ref{proofuppersparse}   that for each $a\in\{H,I,S\}$, 
$R(\M^a)$ is bounded by a constant, except in the intermediate case  for $p<2$ where the bound is $\log N$ up to a constant. Note that $$\sum_{\lambda} w_{\lambda}=\sum_{j=-1}^J\omega_j 2^j
\lesssim 2^{Jp/2} \lesssim N^{p/2}$$ and, using Lemma~\ref{normetheta},
 the remaining term is bounded as follows  (for $q\geq p+1$):
\begin{align*}
N^{1-q}\|\theta\|_p^p +\varepsilon^p R^\#(\frak{M})&\lesssim N^{1-q}R^p+N^{\frac{1+p-q}{2}}\e^p+\e^p(\log N)^{\frac{p}{2}-1}R(\mathfrak{M})\\
&\lesssim \left(\frac{R}{\e}\right)^{2(1-q)}R^p+\left(\frac{R}{\e}\right)^{1+p-q}\e^p+\e^p(\log N)^{\frac{p}{2}-1}R(\mathfrak{M})\\
&\lesssim \left(\frac{R}{\e}\right)^{2-2q+p}\e^p+\left(\frac{R}{\e}\right)^{1+p-q}\e^p+\e^p(\log N)^{\frac{p}{2}-1}R(\mathfrak{M}).
\end{align*}
Since $R\geq \e$, taking $q=p+1$ allows to show that this term is negligible. Indeed, we have that
$$\e^p(\log N)^{\frac{p}{2}}\lesssim R^{\frac{p}{2s+1}}\e^{\frac{2ps}{2s+1}}  \iff (\log N)^{\frac{p}{2}}\lesssim\left(\frac{R}{\e}\right)^{\frac{p}{2s+1}},$$
which is true. Thus the remaining term is negligible compared to the faster rate given in Theorem~\ref{rateslp}, and \textit{a fortiori} to the other rates.

%
%
%

Now, we consider the main term, i.e.
$$\inf_{(m,a)\in{\mathfrak M}}\left\{\E\|\hat \theta^{(m)}-\theta\|_p^p +\frak{pen}(m,a)\right\}
= \min_{a\in\{H,I,S\}}\inf_{m\in \M^a}\left\{\E\|\hat \theta^{(m)}-\theta\|_p^p +\pen^a(m)\right\}.$$
In the following Sections \ref{proofupperhomogeneous} (homogeneous case), \ref{proofupperintermediate} (intermediate case), \ref{proofuppersparse} (sparse and frontier cases),  we bound for each $a\in\{H,I,S\}$ the quantities $R(\M^a)$ and we prove that 
$$\inf_{m\in \M^a}\left\{\E\|\hat \theta^{(m)}-\theta\|_p^p +\pen^a(m)\right\}\leq
\begin{cases}
 v_H(\e) \text{ if $a=H$ and } r\geq p\\
 v_I(\e) \text{ if $a=I$ and } \frac{p}{2s+1}<r< p\\
 v_S(\e) \text{ if $a=S$ and } r\leq \frac{p}{2s+1}
 \end{cases}$$
 where the $v_a(\e)$'s are the rates given in Theorem~\ref{rateslp}. 
This completes the proof.
As each subsection deals with a different case, from now we drop the upperscript $a$ for ease of notation.

\subsubsection{
Proof of Theorem~\ref{rateslp}: homogeneous case }
\label{proofupperhomogeneous}
In this section we assume that $r\geq p$}. Let us recall our sub-collection of models. A model $m=\cup_{j=-1}^{J} m_j\in\M=\M^H$ if for some $0\leq L\leq J$
\begin{eqnarray*}
\forall  j\leq L, \quad m_j&=&\Lambda_j=\{j\}\times K_j, \\
\forall  j>L, \quad m_j&=&\emptyset.
\end{eqnarray*}
Note that for any $m\in \M$, 
$$\E[V_p(m)]=\e^p\sigma_p^p\sum_{j\geq -1} \omega_j|m_j|
= \e^p\sigma_p^p\sum_{j=-1}^{L} \omega_j2^j
\leq C(p, \sigma_p) \e^p 2^{Lp/2},$$
with $C(p, \sigma_p)$ a constant only depending on $p$ and  $\sigma_p$. If $\theta$ belongs to $\mathcal{B}_{r,\infty}^s(R)$ we can prove that 
$B_p(m)\lesssim R^p2^{-Lps} $. 
Indeed the bias  verifies
$$B_p(m)=\sum_{(j,k)\notin m}\omega_j|\theta_{jk}|^p
=\sum_{j>L,k\in  K_j}2^{j\left(\frac{p}2-1\right)}|\theta_{jk}|^p.$$
Since $r\geq p$, H\"{o}lder's inequality gives for any set $E$, 
$$\sum_{k\in E}|\theta_{jk}|^p\leq |E|^{(1-p/r)}\left(\sum_k|\theta_{jk}|^r\right)^{p/r}.$$
When $\theta \in \mathcal{B}_{r,\infty}^{s}(R)$, that yields 
$\sum_{k\in E}|\theta_{jk}|^p\leq |E|^{(1-p/r)}R^p2^{-pj(s+\frac12-\frac1r)}.$
Then, if $E=K_j$ with cardinal $2^j$, it gives
$\sum_{k\in K_j}|\theta_{jk}|^p\leq R^p 2^{j(-ps+1-\frac{p}{2})}$,
and 
$$B_p(m)=\sum_{j>L}2^{j\left(\frac{p}2-1\right)}\sum_{k\in  K_j}|\theta_{jk}|^p\leq C(p,s)R^p 2^{-p Ls},$$
with $C(p, s)$ a constant only depending on $p$ and $s$. 
For our PCO procedure we have chosen $x_{m_j}=K\log|m_j|$ (with $\log 0 =0$ and $K=p/2$) so that 
$$\pen^H(m)=\pen(m)=
\begin{cases}
2\e^p\sum_{j=-1}^J\omega_j \p_j(m_j)
& \text{ if } p\leq 2\\
2\e^p\sum_{j=-1}^J\omega_j \min\Big(\p_j(m_j),(2q\log N)^{\frac{p}{2}-1}\p_j^{\#}(m_j)\Big)
& \text{ if } p> 2
\end{cases}$$
with 
$$
\p_j(m_j)=\frac32\sigma_p^p|m_j|+\kappa_p2^{\frac{(p-2)_+}2}|m_j|^{\big(1-\frac{p}{2}\big)_+}(K\log|m_j|)^{\frac{p}{2}}$$
and
$$\p_j^{\#}(m_j)=\frac32\sigma_2^2|m_j|+\kappa_2 K\log|m_j|.
$$
Thus, $\p_j(m_j)=0$ if $j>L$ ; and for $j\leq L$:
$$\p_j(m_j)=\frac32\sigma_p^p2^j+\kappa_p2^{\frac{(p-2)_+}2}(2^j)^{\big(1-\frac{p}{2}\big)_+}(Kj\log2)^{\frac{p}{2}}\leq C(p,\sigma_p) 2^j,$$
with $C(p, \sigma_p)$ a constant only depending on $p$ and  $\sigma_p$.  Then
\begin{eqnarray*}
\E\|\hat \theta^{(m)}-\theta\|_p^p +\pen(m) &\leq & \E[V_p(m)]+B_p(m) +\e^p\sum_{j=-1}^{L}\omega_j C(p,\sigma_p) 2^j\\
&\lesssim & \varepsilon^p2^{Lp/2}+R^p2^{-Lp s} +\e^p\sum_{j=-1}^{L}2^{j\frac{p}{2}}\\
&\lesssim & \varepsilon^p2^{Lp/2}+R^p2^{-Lps} 
\end{eqnarray*}
which provides
$$
\inf_{m\in \M}\left\{\E\|\hat \theta^{(m)}-\theta\|_p^p +\pen(m)\right\}\leq   C R^{\frac{ p}{2s+1}} \varepsilon^{\frac{2sp}{2s+1}},$$ for $C$ a constant, choosing $L$ such that $2^{L}\approx (R/\e)^{2/(2s+1)}$
(possible since $2^J=N/2\gtrsim (R/\e)^2$).
Moreover, we compute
\begin{eqnarray*}
R(\M)&=&
\sum_{j=-1}^J\omega_j\sum_{m_j\in \M_j,{m_j\neq \emptyset}}|m_j|^{\left(1-p/2\right)_+}e^{-K\log|m_j|}\\
&=&
\sum_{j=-1}^J2^{j\left(\frac{p}2-1\right)}\sum_{m_j\in \M_j}\1_{j\leq L}2^{j\left(1-\frac{p}2\right)_+}2^{-jK}\\
&\leq &
\sum_{j\geq -1}2^{j\left(\frac{p}2-1\right)}2^{j[\left(1-\frac{p}2\right)_+-K]}
=\sum_{j\geq -1}2^{j[\left(\frac{p}2-1\right)_+-K]}<\infty
\end{eqnarray*}
as soon as $K>(\frac{p}{2}-1)_+$.

\subsubsection{
Proof of Theorem~\ref{rateslp}: intermediate case} 
\label{proofupperintermediate}

In this section, we assume that $\frac{p}{2s+1}<r<p$.
Let us now consider the following model, inspired from \cite{Massart2007}:
$m$ belongs to $\M(L)$ if 
$$m_j=\begin{cases}
 \Lambda_j & \text{ if } -1\leq j\leq {L-1}\\
m_{L+l} \subset \Lambda_{L+l}& \text{ if } l=j-L\geq 0 \text{ with }|m_{L+l}|=\lfloor 2^{L+l} A(l)\rfloor
\end{cases}$$
with $A(l):=2^{-lp/2}(l+1)^{-3}$.
At the end $\M=\bigcup_{L= 0} ^{J}\M(L)$. 

Note that the cardinal $\lfloor 2^{L+l} A(l)\rfloor$ is equal to 0 when 
$2^{l(p/2-1)}(l+1)^{3}>2^{L}$. Then
if $p\geq 2$,  $m_j=\emptyset$ as soon as $j>L+l_{\max}$, with $l_{\max}$ such that $$2^{L}2^{-l_{\max}(p/2-1)}(l_{\max}+1)^{-3}\approx 1.$$
Therefore, $l_{\max}$ is of order $L/(p/2-1)$ when $p>2$ and of order $2^{L/3}$ if $p=2$. When $p<2$, we only have $l\leq J-L$.

To apply our model selection strategy, we set
\begin{equation}\label{def-weight}
x_{m_j}:=K|m_j|\bigg(1+\log\Big(\frac{2^j}{|m_j|}\Big)\bigg)
\end{equation}
with $K$ large enough (see later).
Observe that at each level $j$,  we consider two types of models. Either the model $m_j$ is the whole slice 
$\{j\}\times K_j$, 
or it is a strict subset of this slice and in this case, it means that there exists $L\leq j$ such that 
$$|m_j|=\lfloor 2^j A(j-L)\rfloor=\lfloor 2^j 2^{-(j-L)p/2}(j-L+1)^{-3}\rfloor.$$
Our choice of the factor $x_{m_j}$ automatically adapts to both types of models:
\begin{equation}\label{def:xmj}
\left\{
\begin{array}{llc}
x_{m_j}=K|m_j|  &\textrm{for the first type,}   &  \\
x_{m_j}\approx K|m_j|\bigg(1-\log A(j-L)\bigg)   & \textrm{for the second type.}   &   
\end{array}
\right.
\end{equation}
In particular, for the first type $x_{m_j}=K2^j$ and for the second type $x_{m_j}$ is of the same order as $K'|m_j|\times(j-L)$. 

As explained in Section~\ref{proofupper}, we have to bound 
$R(\M)$, that is to
 show that the term $\sum_{j=-1}^J\omega_j\sum_{m_j\in\M_j,{m_j\neq \emptyset}} |m_j|^{\left(1-\frac{p}{2}\right)_+}e^{-x_{m_j}}$ is bounded.
 In the sequel,  for the sake of simplicity, we set
$$b:=\frac{p}{2}-1.$$ Considering the two types of models, we have, with ${\mathcal M}_j(L)=\big\{m_j:\ m\in\M(L)\big\},$
\begin{align*}
R(\M)
 &\leq \sum_{j=-1}^J\omega_j(2^j)^{\left(-b\right)_+}e^{-K2^j}+ \sum_{L= 0}^{J}\sum_{j=-1}^J\omega_j|\M_j(L)||m_j|^{\left(-b\right)_+}e^{-K|m_j|\big(1+\log\big(2^{(j-L)p/2}(j-L+1)^3\big)\big) }\1_{j\geq L}\\
 &=: T_1+T_2.
\end{align*}
Since $\omega_j=2^{j(\frac{p}{2}-1)}=2^{jb}$,
\begin{align*}
T_1
&\leq \sum_{j=-1}^{+\infty}2^{jb}(2^j)^{(-b)_+}e^{-K2^j}<\infty,
\end{align*}
for $K>0$. Furthermore, 
\begin{align*}
T_2&\leq\sum_{L= 0}^{J}\sum_{j=L}^J2^{jb}|\M_j(L)|(2^j)^{(-b)_+}e^{-K|m_j|\big(1+\log\big(2^{(j-L)p/2}(j-L+1)^3\big)\big)}\\
&\leq\sum_{L= 0}^{J}\sum_{l=0}^{J-L}2^{(L+l)b_+}|\M_{L+l}(L)|e^{-K 2^L2^{-lb}(l+1)^{-3}\big(1+\log\big(2^{lp/2}(l+1)^3\big)\big)}.
\end{align*}
For $j\geq L$,  the complexity of the collection at level $j=L+l$  is
\begin{eqnarray*}
\log |\M_j(L)|& \leq & \log \binom{2^{L+l}}{|m_{L+l}|}
\leq 
|m_{L+l}|\log \left(\frac{e2^{L+l}}{|m_{L+l}|}\right)
\end{eqnarray*}
where we have used the bound
$\log \binom{c}{d}
\leq 
d\log \left(\frac{ec}{d}\right)$.
Then
\begin{eqnarray*}
\log |\M_j(L)|
&\leq &
2^{L}2^{-lb}(l+1)^{-3}\log \left({e}2^{L+l}\lfloor 2^{L+l} A(l)\rfloor^{-1}\right)\\
&\leq & 2^{L}
2^{-lb}(l+1)^{-3}\log \left(\frac{e}{ 2^{-lp/2}(l+1)^{-3}-2^{-(L+l)}}\right)\\
&\leq &  C 2^{L}
2^{-lb}(l+1)^{-2},
\end{eqnarray*}
with $C$ a constant only depending on $p$.
Then we have
\begin{align*}
T_2&\leq\sum_{L= 0}^{J}\sum_{l=0}^{J-L}2^{(L+l)b_+}\exp\Big((C-Kp\log(2)/2) 2^L2^{-lb}(l+1)^{-2}\Big).
\end{align*}
Here we distinguish two cases.
\begin{description}
\item[Case $p\geq 2$:]
Recall that, if $l\geq 0$
and $p\geq 2$, $m_{L+l}=\emptyset$ if $l>l_{\max}$. Then we have, for $K$ large enough such that $C-Kp\log(2)/2<0$,
 \begin{align*}
T_2&\leq \sum_{L= 0}^{J}\sum_{l=0}^{l_{\max}}2^{(L+l)b}\exp\Big((C-Kp\log(2)/2)2^{L}
2^{-lb}(l+1)^{-2}\Big)\\
&\leq \sum_{L= 0}^{J}\sum_{l=0}^{l_{\max}}2^{(L+l)b}\exp\Big((C-Kp\log(2)/2)2^{L}
2^{- l_{\max}b}( l_{\max}+1)^{-2}\Big)\\
&\leq \sum_{L= 0}^{J}\sum_{l=0}^{l_{\max}}2^{(L+l)b}\exp\Big((C-Kp\log(2)/2)( l_{\max}+1)\Big).
\end{align*}
We have used that
$$2^{L}2^{-l_{\max}b}(l_{\max}+1)^{-3}\approx 1.$$
For $p>2$, $l_{\max}\approx L/b$, and for $K$ constant large enough 
 \begin{align*}
T_2&\leq \sum_{L= 0}^{J}2^{Lb}\exp(-\tilde K L)
\end{align*}
with $\tilde K$ as large as desired
and $T_2<\infty$.\\
For $p=2$, $l_{\max}\approx 2^{L/3}$, and for $K$ constant large enough $T_2<\infty$.
\item [Case $p<2$:]
The function $l\mapsto2^{-l(p/2-1)}(l+1)^{-2}$ is increasing except on a compact interval. Therefore, for $K$ constant large enough,
\begin{align*}
T_2&\leq\sum_{L= 0}^{J}\sum_{l=0}^{J-L}\exp\Big((C-Kp\log(2)/2) 2^L2^{-lb}(l+1)^{-2}\Big)\\
&\leq\sum_{L= 0}^{J}\sum_{l=0}^{J-L}\exp(-K'2^L)\lesssim J=\log_2(N/2),
\end{align*}
\end{description}
with $K'$ a positive constant. Finally, we have proved that that $R(\M)$ is bounded  by $\log N$ up to a constant. It means that in \eqref{RMM}, the last term
$\breve M_{p}\varepsilon^p R(\frak{M})$ is bounded by $\e^p |\log(\e)|$, which is negligible when compared to the rate.

It remains to bound $\inf_{m\in \M}\left\{\E\|\hat \theta^{(m)}-\theta\|_p^p +\pen(m)\right\},$
with 
$\pen(m)=
2\e^p\sum_{j}\omega_j\p_j(m_j)$ 
and, with a slight abuse of notation, 
$$\p_j(m_j)=\begin{cases}
\frac32\sigma_p^p|m_j|+\kappa_p |m_j|^{1-\frac{p}2}x_{m_j}^{\frac{p}2}
& \text{ if }p\leq 2 \\
(2q\log N)^{\frac{p}2-1}
\left(
\frac32\sigma_2^2|m_j|+\kappa_2 x_{m_j}
\right)
&\text{ if }p>2
\end{cases}$$
where $q=p+1$.
Since $\M=\bigcup_{L= 0} ^{J}\M(L)$, we can write 
\begin{equation}\label{tradeoff}
\inf_{m\in \M} \Big\{\E\|\hat \theta^{(m)}-\theta\|_p^p+\pen(m)\Big\}\leq\inf_{L}\left\{\sup_{m\in \M(L)}\E[V_p(m)]+\inf_{m\in \M(L)}B_p(m)+\sup_{m\in \M(L)}\pen(m)\right\}.
\end{equation}
Let us study the three terms in the right hand side.
Since $\omega_j=2^{j(p/2-1)}$, we have for any  $m\in \M(L)$, 
\begin{eqnarray*}
\E[V_p(m)]&\leq&\varepsilon^p\sigma_p^p\left(\sum_{j< L}2^{j(p/2-1)}2^j
+\sum_{l\geq 0}2^{(L+l)(\frac{p}{2}-1)}2^{L}2^{-l(p/2-1)}(l+1)^{-3}\right)\\
&\lesssim & \varepsilon^p\sigma_p^p\left(2^{Lp/2}
+2^{L(p/2-1+1)}\right)\lesssim \varepsilon^p\sigma_p^p2^{Lp/2}
\end{eqnarray*}
Therefore
$$\sup_{m\in \M(L)}\E[V_p(m)]
\lesssim\varepsilon^p\sigma_p^p2^{Lp/2}.$$
Moreover we can prove the following lemma (see Section~\ref{sec:proofbiaisBesov})
\begin{lemma}\label{biaisBesov}
If $\theta\in \mathcal{B}_{r,\infty}^{s}(R)$ 
and $\frac{p}{2s+1}<r$
$$\inf_{m\in \M(L)}B_p(m)\lesssim R^p2^{-s p L}.$$
\end{lemma}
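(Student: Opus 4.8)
The plan is, for each fixed $L$, to construct one explicit model in $\M(L)$ whose bias $B_p(m)=\sum_{(j,k)\notin m}\omega_j|\theta_{jk}|^p$ is already $\lesssim R^p2^{-spL}$; the infimum over $\M(L)$ is then no larger. Since every $m\in\M(L)$ contains $\Lambda_j$ entirely for $j\leq L-1$, those levels contribute nothing, and at each remaining level $j=L+l$, $l\geq0$, the only freedom is which $|m_j|=\lfloor 2^{L+l}A(l)\rfloor$ coefficients of $K_j$ to retain; I would retain those carrying the $|m_j|$ largest values $|\theta_{jk}|$. Thus $B_p(m)$ splits into a sum over levels $L\leq j\leq J$ of ``tails of $p$-th powers'' plus the full contribution of the levels $j>J$, which are absent from every model in $\M(L)$.

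For a single level $j=L+l$, write $(\theta^{*}_{jk})_{k}$ for the nonincreasing rearrangement of $(|\theta_{jk}|)_{k\in K_j}$. Membership $\theta\in\mathcal{B}^{s}_{r,\infty}(R)$ yields $\sum_{k\in K_j}|\theta_{jk}|^{r}\leq R^{r}2^{-rj(s+\frac12-\frac1r)}=:S_j$, hence $k(\theta^{*}_{jk})^{r}\leq S_j$ and $(\theta^{*}_{jk})^{p}\leq(S_j/k)^{p/r}$. Since $r<p$ (we are in the intermediate case, so $p/r>1$),
\[
\sum_{k\notin m_j}|\theta_{jk}|^{p}=\sum_{k>|m_j|}(\theta^{*}_{jk})^{p}\leq S_j^{p/r}\sum_{k>|m_j|}k^{-p/r}\lesssim S_j^{p/r}\,\max(|m_j|,1)^{1-p/r}.
\]
The elementary point is that $\max(|m_j|,1)\geq\frac12\,2^{L+l}A(l)$ for every $l\geq0$: if $\lfloor 2^{L+l}A(l)\rfloor\geq1$ it is at least half its argument, and if that floor is $0$ the argument is below $1<2$. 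Inserting this bound, together with $\omega_j=2^{j(\frac{p}{2}-1)}$, $A(l)=2^{-lp/2}(l+1)^{-3}$, and the value of $S_j$, and then collecting powers of $2$, one checks that the contribution of level $j=L+l$ is at most a constant multiple of
\[
R^{p}\,2^{-spL}\,2^{-p\gamma l}(l+1)^{3(p/r-1)},\qquad \gamma:=s+\tfrac12-\tfrac{p}{2r}.
\]
The exponent of $2^{L}$ collapses to exactly $-sp$, and the hypothesis $r>p/(2s+1)$ is precisely the statement $\gamma>0$. Because $3(p/r-1)$ is a fixed positive power, $\sum_{l\geq0}2^{-p\gamma l}(l+1)^{3(p/r-1)}<\infty$, so the sum of the level contributions over $L\leq j\leq J$ is $\lesssim R^{p}2^{-spL}$.

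For the levels $j>J$ I would use the crude embedding bound $\sum_{k\in K_j}|\theta_{jk}|^{p}\leq S_j^{p/r}$, so that their total contribution is $\sum_{j>J}\omega_j\sum_{k}|\theta_{jk}|^{p}\leq R^{p}\sum_{j>J}2^{j\delta}\lesssim R^{p}2^{J\delta}$ with $\delta:=\frac{p}{r}-1-ps$. The standing assumption $s>\frac1r$ makes $\delta<-1$, and since $2^{J+1}=N\geq(R/\e)^{2}$ this is $\lesssim R^{p}2^{-J}\lesssim R^{p-2}\e^{2}$, which over the range of $L$ used in the proof of Theorem~\ref{rateslp} is of the same order as, or smaller than, $R^p2^{-spL}$ (and when $p\geq2$ with $s(p-2)>1$ these top levels are already empty in the construction, hence already accounted for above). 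Adding the two parts proves the lemma. The step I expect to be the real work is the power-of-$2$ bookkeeping: verifying that the $2^{L}$-exponent is exactly $-sp$ and that the residual decay in $l$ is governed by $\gamma=s+\frac12-\frac{p}{2r}$, whose change of sign at $r=p/(2s+1)$ is the crux of the statement; keeping track of the frozen top levels $j>J$ uniformly is a minor additional chore.
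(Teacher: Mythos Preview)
Your proof is correct and follows essentially the same route as the paper's. The paper packages your pointwise rearrangement bound $(\theta^{*}_{jk})^{p}\leq (S_j/k)^{p/r}$ into a one-line inequality (their Lemma~\ref{lemmeintermediaire}: $\sum_{k>n}a_{(k)}^{p}\leq(\sum_i a_i^{r})^{p/r}(n+1)^{1-p/r}$), but the subsequent exponent bookkeeping is identical, with the same pivotal observation that the residual decay in $l$ is governed by $\gamma=s+\tfrac12-\tfrac{p}{2r}$, positive exactly when $r>p/(2s+1)$.

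The one genuine difference is your explicit treatment of the levels $j>J$. The paper simply lets the sum over $l$ run to infinity, in effect pretending the cardinality prescription $|m_j|=\lfloor 2^{L+l}A(l)\rfloor$ continues beyond $\Lambda^{(N)}$; this silently underestimates the bias at those levels when the floor is still positive. You separate out that tail, bound it by $R^{p-2}\e^{2}$, and note that it is dominated by $R^{p}2^{-spL}$ for the values of $L$ actually used in Theorem~\ref{rateslp}. That is the honest way to close this small gap: as literally stated, the lemma need not hold uniformly for every $L\leq J$ in the intermediate regime $r<p$, only for the relevant range of $L$, and you correctly flag this rather than sweeping it under the rug.
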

For the last term $\sup_{m\in \M(L)}\pen(m)$, we distinguish two cases.
\begin{description}
\item[case $p> 2$:]
We have
$$\p_j(m_j)\leq(2q\log N)^{\frac{p}{2}-1}\big(
\frac32\sigma_2^2|m_j|+\kappa_2 x_{m_j}
\big)$$
and $\sum_{j=1}^J\omega_j\p_j(m_j)$ is bounded by (up to a constant): 
\begin{align*}
& (\log N)^{\frac{p}{2}-1}\left[\sum_{j<L}2^{jb}|m_j|+\sum_{j\geq L}2^{jb}2^{(1+b)L}2^{-jb}(j-L+1)^{-3}\big((j-L+1)+\log(j-L+1)\big)\right]\\
&\lesssim (\log N)^{\frac{p}{2}-1}2^{Lp/2}.
\end{align*}
Finally equation 
\eqref{tradeoff} provides in the case $p> 2$
\begin{align*}
\inf_{m\in \M} \Big\{\E\|\hat \theta^{(m)}-\theta\|_p^p+\pen(m)\Big\}&\lesssim
\inf_{L}\left\{\e^p2^{Lp/2}+R^p2^{-s p L}+\e^p(\log N)^{\frac{p}{2}-1}2^{Lp/2}\right\}
\end{align*}
Using that $\log(N)\lesssim |\log(\e)|$, and considering an $L$ such that $2^L\approx\left(R^2\e^{-2}|\log(\e)|^{\frac{2}{p}-1}\right)^{\frac{1}{2s +1}}$ (possible since $2^J\gtrsim R^2\e^{-2}$), we obtain
\begin{align*}
\inf_{m\in \M} \Big\{\E\|\hat \theta^{(m)}-\theta\|_p^p+\pen(m)\Big\}
\lesssim
R^{\frac{ p}{2s +1}}(\varepsilon^{2p}|\log(\e)|^{p-2})^{\frac{s }{2s +1}}.
\end{align*}
\item[case $p\leq 2$:]
We have
$$\p_j(m_j)=\frac32\sigma_p^p|m_j|+\kappa_p|m_j|^{\big(1-\frac{p}{2}\big)}x_{m_j}^{\frac{p}{2}}.$$
We just have to deal with the following term:
\begin{align*}
\sum_{j=1}^J\omega_j|m_j|^{\big(1-\frac{p}{2}\big)}x_{m_j}^{\frac{p}{2}}&\lesssim\sum_{j<L}2^{jb}|m_j|+\sum_{j\geq L}w_j|m_j|\big((j-L+1)+\log(j-L+1)\big)^{p/2}\\
&\lesssim 2^{Lp/2}+\sum_{j\geq L}2^{jb}2^{(1+b)L}2^{-jb}(j-L+1)^{-3}\big((j-L+1)+\log(j-L+1)\big)^{p/2}\\
&\lesssim 2^{Lp/2},
\end{align*}
since $3-p/2>1$. 
Finally equation\eqref{tradeoff} provides in the case $p\leq 2$
$$\inf_{m\in \M} \Big\{\E\|\hat \theta^{(m)}-\theta\|_p^p+\pen(m)\Big\}\lesssim
\inf_{L}\left\{\e^p2^{Lp/2}+R^p2^{-s p L}+\e^p2^{Lp/2}\right\}
\lesssim R^{\frac{ p}{2s +1}}\varepsilon^{\frac{2s p}{2s +1}}.$$
\end{description}

\subsubsection{
Proof of Theorem~\ref{rateslp}: sparse {and frontier} case}
\label{proofuppersparse}



In this section we assume that $r\leq \frac{p}{2s+1}$. Since $s>1/r$, it only occurs when $p>2$
(since $p\geq r(2s+1)\Rightarrow  p>2+r>2$). 

Recall that our model collection is defined by  $\M_j=\{\{j\}\times E, \: E\in \mathcal{P}(K_j)
\}$ for any $j\geq -1$.
For this collection we choose $x_{m_j}=K{|m_j| j}$ with $K=p+1> 2+(\frac{p}{2}-1)\log(2)$. 
%
%
As required in Section~\ref{proofupper} let us bound 
$R(\M)=\sum_{j=-1}^J \omega_j \sum_{m_j\in \M_j,{m_j\neq \emptyset}}e^{-x_{m_j}}$. 
We can write 
\begin{eqnarray*}
R(\M)&=&\sum_{j=-1}^J\omega_j \sum_{d=1}^{2^j} \sum_{m_j\in \M_j, |m_j|=d}e^{-x_{m_j}}\\
&=&\sum_{j=-1}^J\omega_j \sum_{d=1}^{2^j} {\rm card}\{m_j\in \M_j, |m_j|=d\}e^{-K{dj}}.
\end{eqnarray*}
Now we use that 
$$\log \binom{2^j}{d}\leq d\left(1+\log \left(\frac{2^j}{d}\right)\right)\leq 2 d j$$
to state
\begin{eqnarray*}
R(\M)&\leq &\sum_{j=-1}^J\omega_j \sum_{d=1}^{2^j} e^{2dj}e^{-Kdj}
\leq \sum_{j=-1}^J2^{j\left(\frac{p}{2}-1\right)} \sum_{d=1}^{\infty} e^{(2-K)dj}\\
&\leq & 
\sum_{j=-1}^J\omega_j  \frac{{e^{(2-K)j}}}{ 1- e^{(2-K)j}}
\lesssim\sum_{j=-1}^J e^{\left(2+(\frac{p}{2}-1)\log(2)-K\right)j}<\infty. 
\end{eqnarray*}
The following remark will be useful in Section~\ref{sec:end}.
\begin{remark} \label{RemarkResteSparse}
We also have:
\begin{eqnarray*}
R_2^{2/p}
&:=&\sum_{j=-1}^J \left(\omega_j \sum_{m_j\in \M_j,{m_j\neq \emptyset}}e^{-x_{m_j}}\right)^{2/p}
=\sum_{j=-1}^J\left(\omega_j \sum_{d=1}^{2^j} \sum_{m_j\in \M_j, |m_j|=d}e^{-Kdj}\right)^{2/p}\\
&\lesssim&
\sum_{j=-1}^J\left(e^{\left(2+(\frac{p}{2}-1)\log(2)-K\right)j} \right)^{2/p}<\infty
\end{eqnarray*}
\end{remark}

It remains to control the term $\inf_{m\in \M}\left\{\E\|\hat \theta^{(m)}-\theta\|_p^p +\pen(m)\right\}.$
Taking inspiration from the various works of Donoho, Johnstone, Kerkyacharian and Picard, we now define 
$$\check m_j=
\begin{cases} 
\{j\}\times K_j & \text{ if } j < j_1\\
\{j\}\times \Big\{k\in K_j,\: |\theta_{jk}|> \e \sqrt{j}\Big\} &\text{ if } j_1\leq j \leq  j_0\\
\emptyset &\text{ if } j >  j_0
\end{cases}$$
where $j_1=j_1(\e)$ and $j_0=j_0(\e)$ are defined by 
$$2^{j_1}\approx(R^{-1}\e |\log \e|^{\frac12})^{4\beta-2},
\quad 2^{j_0}\approx(R^{-1}\e |\log \e|^{\frac12})^{-2\beta/s'}$$
with
$$s'=s-\frac{1}{r}+\frac{1}{p},\quad
\beta=\frac{s'}{2s+1-\frac2{r}}$$
so that
$$p-r+\frac{2\beta}{s'}r\left(s+\frac12-\frac{p}{2r}\right)= 2p\beta.$$
Observe that, since $p>2$ and $s>1/r,$ $$0<\beta<\min\Big(\frac12,s'\Big).$$ Moreover, since $R\e^{-1}\geq 1$, 
$$2^{j_0}\ll
(R^{-1}\e )^{-2\beta/s'}=(R\e^{-1} )^{2\beta/s'}\leq(R\e^{-1} )^2\lesssim N/2= 2^J.$$
Then, the model $\check m=\cup_j \check m_j$ belongs to $\M$ (even if $\check m$ depends on $\theta$). 
It satisfies the following property, proved in Section~\ref{sec:proofvitessesparse}.
\begin{proposition} 
\label{vitessesparse2} Assume that $r<p/(2s +1)$.
There exists a positive constant $C$ (depending on $s, r , p$) such that 
$$\sup_{\theta\in \mathcal{B}_{r,\infty}^s(R)}\E\|\hat \theta^{(\check m)}-\theta\|_p^p
\leq  CR^{p(1-2\beta)} |\log \varepsilon|^{p\beta}\: \e^{2p\beta}$$
Moreover, if $r=p/(2s +1)$, $$\sup_{\theta\in \mathcal{B}_{r,\infty}^s(R)}\E\|\hat \theta^{(\check m)}-\theta\|_p^p
\leq  CR^{p(1-2\beta)} |\log \varepsilon|^{p\beta+1}\: \e^{2p\beta}.$$
\end{proposition}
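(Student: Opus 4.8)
The plan is to control the oracle risk $\E\|\hat\theta^{(\check m)}-\theta\|_p^p=B_p(\check m)+\E\big[V_p(\check m)\big]$ uniformly over $\theta\in\mathcal{B}_{r,\infty}^{s}(R)$ by splitting both the $\ell_p(w)$-bias and the $\ell_p(w)$-variance according to the three resolution ranges built into $\check m$: the \emph{low} range $-1\le j<j_1$, where all coefficients are kept so the bias vanishes; the \emph{middle} range $j_1\le j\le j_0$, where the hard threshold $\varepsilon\sqrt{j}$ is applied coefficient-wise; and the \emph{high} range $j>j_0$, where all coefficients are discarded so the variance vanishes. Every estimate below uses only the constraint $\sum_{k\in K_j}|\theta_{jk}|^r\le R^r2^{-rj(s+\frac12-\frac1r)}$, hence is automatically uniform in $\theta$. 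Throughout I write $\omega_j=2^{j(p/2-1)}$, recall $s'=s-\tfrac1r+\tfrac1p>0$ and $\beta=\tfrac{s'}{2s+1-2/r}\in(0,\tfrac12)$, and use that under the hypotheses ($R\le\varepsilon^{-1}$, $2^J=N/2\le\varepsilon^{-\gamma}$) one has $j_0\lesssim|\log\varepsilon|$.

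For the low range, $B_p$ vanishes and, since $p>2$, $\E[V_p]=\varepsilon^p\sigma_p^p\sum_{j<j_1}2^{jp/2}\lesssim\varepsilon^p2^{j_1p/2}$; substituting $2^{j_1}\approx(R^{-1}\varepsilon|\log\varepsilon|^{1/2})^{4\beta-2}$ and simplifying gives a term of order $R^{p(1-2\beta)}\varepsilon^{2p\beta}|\log\varepsilon|^{(2\beta-1)p/2}$, which is $\lesssim R^{p(1-2\beta)}\varepsilon^{2p\beta}|\log\varepsilon|^{p\beta}$ because $\beta<\tfrac12$. For the high range, $V_p$ vanishes and, since $r<p$, the embedding $\ell_r\hookrightarrow\ell_p$ yields $\omega_j\sum_k|\theta_{jk}|^p\le R^p2^{-jps'}$; as $s>\tfrac1r$ forces $ps'>0$, the geometric series gives $B_p\lesssim R^p2^{-j_0ps'}$, and $2^{j_0}\approx(R^{-1}\varepsilon|\log\varepsilon|^{1/2})^{-2\beta/s'}$ turns this into exactly $R^{p(1-2\beta)}\varepsilon^{2p\beta}|\log\varepsilon|^{p\beta}$. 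So the two boundary ranges already match the target rate.

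The heart of the argument is the middle range. At a level $j\in[j_1,j_0]$ the bias equals $\omega_j\sum_{|\theta_{jk}|\le\varepsilon\sqrt j}|\theta_{jk}|^p\le\omega_j(\varepsilon\sqrt j)^{p-r}\sum_k|\theta_{jk}|^r$ (using $p>r$), while the variance is $\varepsilon^p\sigma_p^p\,\omega_j\,|\{k:|\theta_{jk}|>\varepsilon\sqrt j\}|\le\varepsilon^p\sigma_p^p\,\omega_j\min\!\big(2^j,(\varepsilon\sqrt j)^{-r}\sum_k|\theta_{jk}|^r\big)$. Bounding $\sum_k|\theta_{jk}|^r$ by the Besov constraint, the contribution coming from the second term of the $\min$ is, up to constants and a polynomial factor in $j$, of order $\varepsilon^{p-r}R^r\,\omega_j2^{-rj(s+\frac12-\frac1r)}=\varepsilon^{p-r}R^r2^{j[(p-r)/2-rs]}$. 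In the sparse case $r<p/(2s+1)$ the exponent $(p-r)/2-rs$ is strictly positive, so the sum over $j_1\le j\le j_0$ is dominated by $j=j_0$; invoking the identity $\tfrac{2\beta}{s'}\big(\tfrac{p-r}{2}-rs\big)=p-r-2p\beta$ (equivalently the displayed relation $p-r+\tfrac{2\beta}{s'}r(s+\tfrac12-\tfrac{p}{2r})=2p\beta$) together with $j_0\lesssim|\log\varepsilon|$ collapses it to $R^{p(1-2\beta)}\varepsilon^{2p\beta}|\log\varepsilon|^{p\beta}$; the bias terms are controlled identically, with $(\varepsilon\sqrt j)^{p-r}$ in place of $\varepsilon^p(\varepsilon\sqrt j)^{-r}$. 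The contribution of the first term $2^j$ of the $\min$ occurs only below the crossover level $\bar j$ defined by $2^{\bar j(s+1/2)}\bar j^{1/2}\approx R/\varepsilon$, and there $\varepsilon^p\sum_{j\le\bar j\wedge j_0}\omega_j2^j\lesssim\varepsilon^p2^{(\bar j\wedge j_0)p/2}$, which is bounded by the dense rate $R^{p/(2s+1)}\varepsilon^{2ps/(2s+1)}$; since $r\le p/(2s+1)$ implies $2p\beta\le 2ps/(2s+1)$, this is no larger than the target. Adding the three ranges proves the sparse bound. In the frontier case $r=p/(2s+1)$ the exponent $(p-r)/2-rs$ vanishes and $\beta=(p-r)/(2p)$, so the middle-range sum is no longer geometric but instead picks up the extra factor $\sum_{j_1\le j\le j_0}j^{(p-r)/2}\lesssim j_0^{(p-r)/2+1}\lesssim|\log\varepsilon|^{p\beta+1}$, which produces the claimed $|\log\varepsilon|^{p\beta+1}$; the boundary ranges still contribute only $|\log\varepsilon|^{p\beta}$ and are absorbed.

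The main obstacle is precisely this middle-range bias–variance trade-off: one must simultaneously track the polynomial-in-$j$ factors (which, after $j_0\lesssim|\log\varepsilon|$, are exactly what supplies the logarithmic powers), the crossover between the two bounds for $|\check m_j|$ (which dictates the choice of $j_1$ and must be checked not to degrade the rate), and the exact arithmetic linking $\beta$, $s'$, $j_0$ and $j_1$ that makes the low-, middle- and high-frequency contributions collapse onto the same power of $R$, $\varepsilon$ and $|\log\varepsilon|$. No new idea is needed beyond carrying out this bookkeeping.
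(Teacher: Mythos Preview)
Your proposal is correct and follows essentially the same decomposition and bounds as the paper's proof (high-range bias via $\ell_r\hookrightarrow\ell_p$, low-range variance via the geometric sum, middle range via the Markov-type replacements $|\theta_{jk}|^p\1_{|\theta_{jk}|\le \varepsilon\sqrt j}\le(\varepsilon\sqrt j)^{p-r}|\theta_{jk}|^r$ and $\1_{|\theta_{jk}|>\varepsilon\sqrt j}\le(\varepsilon\sqrt j)^{-r}|\theta_{jk}|^r$, then the algebraic identity for $\beta$). The one cosmetic difference is your use of $|\check m_j|\le\min\big(2^j,(\varepsilon\sqrt j)^{-r}\sum_k|\theta_{jk}|^r\big)$ together with a crossover level $\bar j$: the paper simply takes the second term of this minimum throughout $[j_1,j_0]$, which already gives the right rate, so your crossover analysis and the comparison with the dense rate $R^{p/(2s+1)}\varepsilon^{2ps/(2s+1)}$ are correct but unnecessary.
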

The following remark will be useful in Section~\ref{sec:end}.
\begin{remark}\label{RemarkBiaisVarSparse}
We also have,  for $r<p/(2s +1)$,
$$\sup_{\theta\in \mathcal{B}_{r,\infty}^s(R)}\sum_j\left(2^{j(\frac{p}{2}-1)}\sum_k\E|(\hat \theta^{(\check m)}-\theta)_{jk}|^p\right)^{\frac{2}{p}}\leq C  R^{2(1-2\beta)}|\log \varepsilon|^{2\beta}\: \e^{4\beta},$$
and for $r=p/(2s +1)$, the right hand side  is replaced by 
 $C  R^{2(1-2\beta)}|\log \varepsilon|^{2\beta+1}\: \e^{4\beta}$.
\end{remark}
Let us now bound, for $m=\check{m}$, the term
 $$\pen(m)=
2\e^p\sum_{j=-1}^J\omega_j \min\Big(\p_j(m_j),(2q\log N)^{\frac{p}{2}-1}\p_j^\#(m_j)\Big)$$
with 
$$\begin{cases}
\p_j(m_j)=\frac32\sigma_p^p|m_j|+\kappa_p2^{\frac{(p-2)}2}(Kj|m_j|)^{\frac{p}{2}}\\
\p_j^\#(m_j)=\frac32\sigma_2^2|m_j|+\kappa_2Kj|m_j|.
\end{cases}$$
Note that for $j>j_0$, $|m_j|=0$
so that $\pen(m)\leq
2\e^p(2q\log N)^{\frac{p}{2}-1}\sum_{j=-1}^{j_0}\omega_j \p_j^\#(m_j)$.
 For $j<j_1$, $|m_j|=2^j$ so that, since $|\log(R^{-1}\e)|\leq 2  |\log\e|$,
$$\e^p\sum_{j=-1}^{j_1-1}\omega_j\p_j^\#(\check m_j)\lesssim\e^p\sum_{j=-1}^{j_1-1}\omega_j|\check m_j|j
\leq \e^p\sum_{j=-1}^{j_1-1}j2^{jp/2}\lesssim \e^p 2^{j_1p/2} j_1
\lesssim  R^{p(1-2\beta)}\e^{2\beta p}|\log\e|^{p\beta-p/2+1}.$$
Then, since $\log(N)\lesssim |\log(\e)|$,
 $$(\log N)^{\frac{p}{2}-1}\e^p\sum_{j=-1}^{j_1-1}\omega_j\p_j^\#(\check m_j)
\lesssim R^{p(1-2\beta )} \e^{2\beta p}|\log\e|^{p\beta}.$$
\\
For $j_1\leq j \leq j_0$,  we write 
$|\check{m}_j|=\sum_{k\in  K_j} \1_{|\theta_{jk}|>\e\sqrt{j}}$, thus 
$$\e^p\sum_{j=j_1}^{j_0}\omega_j|\check m_j|j=\e^p\sum_{j=j_1}^{j_0}\omega_j j \sum_{k\in  K_j} \1_{|\theta_{jk}|>\e\sqrt{j}}$$
To control this term we use the same method and bounds as used for term $A_{31}$ in the proof of Proposition~\ref{vitessesparse2}. This gives 
\begin{eqnarray*}
\e^p\sum_{j=j_1}^{j_0}\omega_j|\check m_j|j
& \lesssim & R^r\e ^{p-r}  j_0^{1-r/2}2^{-j_0r(s+\frac12-\frac{p}{2r})}\\
& \lesssim & R^r\e ^{p-r}  |\log \e|^{1-r/2}(R^{-1}\e|\log \e|^{\frac12})^{\frac{2\beta}{s'}r(s+\frac12-\frac{p}{2r})}\\
& \lesssim & R^{p-2p\beta}|\log \e|^{1-\frac{r}2+\frac{\beta}{s'}r(s+\frac12-\frac{p}{2r})}\e^{2p\beta}
\end{eqnarray*}
Then, with $\log(N)\lesssim |\log(\e)|$, 
$$(\log N)^{\frac{p}{2}-1}\e^p\sum_{j=j_1}^{j_0}\omega_j\p_j^\#(\check m_j)
\lesssim  R^{p-2p\beta}\e^{2\beta p}|\log\e|^{\frac{p}{2}-\frac{r}2+\frac{\beta}{s'}r(s+\frac12-\frac{p}{2r})}\lesssim  R^{p(1-2\beta)}\e^{2\beta p}|\log\e|^{p\beta}.$$
(In the frontier case $r=p/(2s +1)$, we obtain the bound
$$(\log N)^{\frac{p}{2}-1}\e^p\sum_{j=j_1}^{j_0}\omega_j\p_j^\#(\check m_j)\lesssim
R^{p(1-2\beta)}\e^{2\beta p}|\log\e|^{p/2-1+2-r/2}.$$)
Finally, reminding Proposition~\ref{vitessesparse2}, we obtain 
$$\inf_{m\in \M}\left\{\E\|\hat \theta^{(m)}-\theta\|_p^p +\pen(m)\right\}\leq \E\|\hat \theta^{(\check m)}-\theta\|_p^p +\pen(\check m)
\lesssim  R^{p(1-2\beta)} (\e\sqrt{|\log\e|})^{2\beta p}.$$
(In the frontier case $r=p/(2s +1)$, we obtain the bound $R^{p(1-2\beta)}\e^{2\beta p}|\log\e|^{1+p\beta}$.)
 \subsubsection{Proof of Lemma~\ref{biaisBesov}}\label{sec:proofbiaisBesov}
  We sort the $\theta_{jk}$'s in the following way: for any 
 $j$ we denote
$$|\theta_{j,(1)}|\geq |\theta_{j,(2)}|\geq \dots \geq $$
Then
$$\inf_{m\in \M(L)}B_p(m)=\inf_{m\in \M(L)}\sum_{(j,k)\notin m}\omega_j|\theta_{jk}|^p
=\sum_{l\geq 0}2^{(L+l)(\frac{p}{2}-1)}\sum_{k>\lfloor  2^{L+l}A(l)\rfloor }|\theta_{L+l,(k)}|^p.$$
To bound the last term we use the following lemma.
\begin{lemma}\label{lemmeintermediaire}
If $a_{(1)}\geq \dots \geq a_{(N)}\geq 0$, then for any $0<r\leq p$ and any
$0\leq n \leq N-1$, we have
$$\sum_{k=n+1}^N a_{(k)}^p \leq \left(\sum_{i=1}^N a_i ^r\right)^{p/r} (n+1)^{1-p/r}.$$
\end{lemma}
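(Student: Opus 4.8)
The plan is to rely solely on the monotonicity of the non-increasing rearrangement, which reduces the whole estimate to a pointwise bound on the $(n+1)$-th largest term. I would first observe that rearranging leaves $\sum_{i=1}^N a_i^r$ unchanged, so I may write $S := \sum_{i=1}^N a_i^r = \sum_{k=1}^N a_{(k)}^r$ and argue with the sorted sequence throughout.

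The key step is the following level bound: since $a_{(1)} \geq \cdots \geq a_{(n+1)}$, each of the first $n+1$ ordered terms is at least $a_{(n+1)}$, so $(n+1)\,a_{(n+1)}^r \leq \sum_{k=1}^{n+1} a_{(k)}^r \leq S$, i.e. $a_{(n+1)}^r \leq S/(n+1)$. Then, for every index $k \geq n+1$ one has $a_{(k)} \leq a_{(n+1)}$, and since $p \geq r$ the function $x \mapsto x^{p-r}$ is nondecreasing on $[0,\infty)$, whence $a_{(k)}^p = a_{(k)}^{p-r}a_{(k)}^r \leq a_{(n+1)}^{p-r}a_{(k)}^r$. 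Summing over $k = n+1,\dots,N$ and bounding $\sum_{k=n+1}^N a_{(k)}^r \leq S$ gives $\sum_{k=n+1}^N a_{(k)}^p \leq a_{(n+1)}^{p-r}\,S = (a_{(n+1)}^r)^{(p-r)/r}S$.

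Plugging in $a_{(n+1)}^r \leq S/(n+1)$ and using $\tfrac{p-r}{r} + 1 = \tfrac{p}{r}$ together with $-\tfrac{p-r}{r} = 1 - \tfrac{p}{r}$ yields $\sum_{k=n+1}^N a_{(k)}^p \leq (S/(n+1))^{(p-r)/r}S = S^{p/r}(n+1)^{1-p/r}$, which is exactly the asserted inequality; the degenerate case $S = 0$ (all $a_i = 0$) is immediate. There is no genuine obstacle here: the only thing to be careful about is the arithmetic of the exponents $p/r$ and $(p-r)/r$ and the placement of the hypothesis $p \geq r$, which is used precisely to make $x \mapsto x^{p-r}$ monotone on $[0,\infty)$.
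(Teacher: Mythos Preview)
Your proof is correct and follows essentially the same argument as the paper: both bound $a_{(n+1)}^r$ by $S/(n+1)$ via the $n+1$ largest terms, then split $a_{(k)}^p = a_{(k)}^{p-r}a_{(k)}^r$ and use $a_{(k)} \leq a_{(n+1)}$ for $k \geq n+1$ to pull out the factor $a_{(n+1)}^{p-r}$. Your write-up is slightly more careful in flagging where $p \geq r$ is used and handling the degenerate case $S=0$, but the route is the same.
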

\begin{proof}
Let $a=a_{(n+1)}.$ Then $a\leq a_{(j)}$ for any $j\leq n+1$ and then
$(n+1)a^r\leq \sum_{i}a_{i}^r$. Therefore,
\begin{eqnarray*}
\sum_{k=n+1}^N a_{(k)}^p =\sum_{k=n+1}^N a_{(k)}^{p-r} a_{(k)}^{r} 
\leq a^{p-r}\sum_{k=n+1}^N  a_{(k)}^{r}
\leq \left(\frac{\sum_{i=1}^N a_{i}^r}{n+1}\right)^{p/r-1}\sum_{i=1}^N a_{i}^r. \end{eqnarray*}
\end{proof}
Using Lemma~\ref{lemmeintermediaire}, we can write 
\begin{eqnarray*}
\inf_{m\in \M(L)}B_p(m)&\leq &\sum_{l\geq 0}2^{(L+l)(\frac{p}{2}-1)}(\sum_k |\theta_{L+l,k}| ^r)^{\frac{p}{r}} (\lfloor 2^{L+l} A(l)\rfloor+1)^{1-\frac{p}{r}}\\
&\leq &\sum_{l\geq 0}2^{(L+l)(\frac{p}{2}-1)}(\sum_k |\theta_{L+l,k}| ^r)^{\frac{p}{r}} \big(2^{L+l}2^{-lp/2}(l+1)^{-3}\big)^{1-\frac{p}r}\\
&\leq & 2^{L(\frac{p}{2}-\frac{p}{r})}\sum_{l\geq 0}2^{l(\frac{p}{2}-1)}
(\sum_k |\theta_{L+l,k}| ^r)^{\frac{p}{r}} 
 \big(2^{l(1-p/2)}(l+1)^{-3}\big)^{1-\frac{p}r}
\end{eqnarray*}
Since $\theta\in \mathcal{B}_{r,\infty}^{s}(R)$,  for all $l$
$$\left(\sum_{k}|\theta_{L+l,k}|^r \right)^{1/r}\leq R  2^{-(s+\frac12-\frac1r)(L+l)}\Rightarrow
\left(\sum_{k}|\theta_{L+l,k}|^r \right)^{p/r}\leq R^p  2^{-(s p+\frac{p}2-\frac{p}r)(L+l)}.
$$
Finally
\begin{eqnarray*}
\inf_{m\in \M(L)}B_p(m)&\leq & R^p 2^{-s p L}\sum_{l\geq 0}
2^{lp(\frac{p}{2r}-\frac12-s )}(l+1)^{3\frac{p}r-3} 
\leq C R^p2^{-s p L}.
\end{eqnarray*}
The series converges because $\frac{p}{2r}-\frac{1}2-s <0\Leftrightarrow \frac{p}{2s+1}<r$. Lemma~\ref{biaisBesov} is proved.
\subsubsection{Proof of Proposition~\ref{vitessesparse2} }
\label{sec:proofvitessesparse}
We denote by $\hat \theta$ the estimator $\hat \theta^{(\check m)}$. First observe that 
$$\|\hat \theta-\theta\|_p^p=
\sum_{j}\sum_{k\in K_j}\omega_j |\hat \theta_{jk} - \theta_{jk}|^p
=A_1+A_2+A_3,$$
with
\begin{align*}
A_1:=\sum_{j>j_0}\sum_{k\in K_j}\omega_j|\hat \theta_{jk}-\theta_{jk}|^p,\quad
A_2:=\sum_{j<j_1}\sum_{k\in K_j}\omega_j|\hat \theta_{jk}-\theta_{jk}|^p\quad
A_3:=\sum_{j=j_1}^{j_0}\sum_{k\in K_j}\omega_j|\hat \theta_{jk}-\theta_{jk}|^p.
\end{align*}
We bound each sum.

\medskip

\noindent Since $\hat\theta_{jk}=0$ for $j>j_0$,
\begin{eqnarray*}
A_1=\sum_{j>j_0}\sum_{k\in K_j}\omega_j|\hat \theta_{jk}-\theta_{jk}|^p=
\sum_{j>j_0}\omega_j\sum_{k\in K_j}|\theta_{jk}|^p.
\end{eqnarray*}
Since $r\leq p $, we have $\sum_{k}|\theta_{jk}|^p\leq \left(\sum_{k}|\theta_{jk}|^r\right)^{p/r}.$
Then,  if $\theta\in \mathcal{B}_{r,\infty}^{s}(R)$
$$A_1\leq  R^p \sum_{j>j_0} 2^{j(\frac p2-1)}2^{-jp(s+\frac12-\frac1r)}\leq  R^p \sum_{j>j_0} 2^{-jp(s+\frac1p-\frac1r)}\leq R^p 2^{-j_0p s'}.$$
With the value of $j_0$ this gives
$$A_1\leq   R^{p(1-2\beta)} (\e |\log \e|^{\frac12})^{2p\beta}.$$
Let us compute
\begin{eqnarray*}
A_2&=&\sum_{j<j_1}\sum_{k\in K_j}\omega_j|\hat \theta_{jk}-\theta_{jk}|^p=
\sum_{j<j_1}\omega_j\sum_{k\in K_j}|Y_{jk}-\theta_{jk}|^p
=\e^p \sum_{j<j_1}2^{j(p/2-1)}\sum_k|\xi_{jk}|^p\\
\end{eqnarray*}
Then 
$\E\big[A_2\big]=\e^p \sum_{j<j_1}2^{jp/2}\sigma_p^p \lesssim \e^p 2^{j_1p/2}.$
With the value of $j_1$ this gives, since $\beta<1/2$,
$$\E\big[A_2\big]\lesssim R^{p(1-2\beta)}\e^{2p\beta}|\log\e|^{p\beta-p/2}\lesssim R^{p(1-2\beta)} \e^{2p\beta}. $$
 We can split the last term in the following way:
\begin{eqnarray*}
A_3&=&\sum_{j=j_1}^{j_0}\sum_{k\in K_j}\omega_j|\hat \theta_{jk}-\theta_{jk}|^p\\
&=&\underbrace{\sum_{j=j_1}^{j_0}\sum_k\omega_j|Y_{jk}-\theta_{jk}|^p\1_{|\theta_{jk}|>\e \sqrt{j}}}_{A_{31}}+\underbrace{\sum_{j=j_1}^{j_0}\sum_k\omega_j|\theta_{jk}|^p\1_{|\theta_{jk}|\leq \e \sqrt{j}}}_{A_{32}}
\end{eqnarray*}

Let us bound the expectation of the term $A_{31}$:
\begin{eqnarray*}
\E\big[A_{31}\big] &=& 
\sum_{j=j_1}^{j_0}\sum_k\omega_j\E|Y_{jk}-\theta_{jk}|^p\1_{|\theta_{jk}|>\e \sqrt{j}}\\
&=& 
\sum_{j=j_1}^{j_0}\sum_k\omega_j\e^p \sigma_p^p\1_{|\theta_{jk}|>\e \sqrt{j}}\\
&\leq & 
\e^p \sigma_p^p\sum_{j=j_1}^{j_0}\sum_k\omega_j(\e \sqrt{j})^{-r}|\theta_{jk}|^r
\end{eqnarray*}
using that $\1_{|\theta_{jk}|>\e \sqrt{j}}\leq (|\theta_{jk}|/\e \sqrt{j})^r$. 
Now recall that  $\theta$ belongs to $\mathcal{B}_{r,\infty}^s(R)$. Then 
\begin{eqnarray*}
\E\big[A_{31}\big]
&\leq & 
\sigma_p^p\e ^{p-r} \sum_{j=j_1}^{j_0}\omega_j j^{-r/2}\sum_k|\theta_{jk}|^r\\
&\leq & R^r\sigma_p^p\e ^{p-r} \sum_{j=j_1}^{j_0}2^{j(p/2-1)} j^{-r/2}2^{-jr(s+\frac12-\frac1r)}\\
& \lesssim &R^r\e ^{p-r}  j_1^{-r/2} \sum_{j=j_1}^{j_0}2^{-jr(s+\frac12-\frac{p}{2r})}.
\end{eqnarray*}
In the sparse case, we have $s+\frac12-\frac{p}{2r}<0$.
Thus, with the definition of $j_0$:
\begin{eqnarray*}
\E\big[A_{31}\big]
& \lesssim &R^r\e ^{p-r}  j_1^{-r/2}2^{-j_0r(s+\frac12-\frac{p}{2r})}\\
& \lesssim &R^r\e ^{p-r}  |\log \e|^{-r/2}\big(R^{-1}\e|\log \e|^{\frac12}\big)^{\frac{2\beta}{s'}r(s+\frac12-\frac{p}{2r})}.
\end{eqnarray*}
%
Note that in the frontier case $s+\frac12-\frac{p}{2r}=0$, we obtain
\begin{eqnarray*}
\E\big[A_{31}\big]
& \lesssim & R^r\e ^{p-r}  j_1^{-r/2}j_0
 \lesssim  R^r\e ^{p-r} |\log(\e)|^{1-r/2}.
\end{eqnarray*} 
It remains to control the  term $A_{32}$, which is deterministic.
Since $p-r>0$
\begin{eqnarray*}
A_{32} &:=& \sum_{j=j_1}^{j_0}\sum_k\omega_j|\theta_{jk}|^p\1_{|\theta_{jk}|\leq  \e \sqrt{j}}\\
&\leq & \sum_{j=j_1}^{j_0}\sum_k\omega_j|\theta_{jk}|^p(\e \sqrt{j}/|\theta_{jk}|)^{p-r}.
\end{eqnarray*}
Now recall  that $\theta$ belongs to $\mathcal{B}_{r,\infty}^s(R)$. Then 
\begin{eqnarray*}
A_{32}& \lesssim & \e^{p-r} \sum_{j=j_1}^{j_0}j^{(p-r)/2}\omega_j\sum_k|\theta_{jk}|^r\\
& \lesssim &R^r \e^{p-r}j_0^{(p-r)/2} \sum_{j=j_1}^{j_0}2^{-jr(s+\frac12-\frac{p}{2r})}.
\end{eqnarray*}
In the sparse case, we have $s+\frac12-\frac{p}{2r}<0$.
Thus, with the definition of $j_0$:
\begin{eqnarray*}
A_{32}& \lesssim &R^r \e^{p-r}j_0^{(p-r)/2}2^{-j_0r(s+\frac12-\frac{p}{2r})}\\ 
& \lesssim &R^r\e^{p-r} |\log \e|^{\frac{p-r}{2}}(R^{-1}\e|\log \e|^{\frac12})^{\frac{2\beta}{s'}r(s+\frac12-\frac{p}{2r})}.
\end{eqnarray*}
But remember that
$$p-r+\frac{2\beta}{s'}r\left(s+\frac12-\frac{p}{2r}\right)= 2p\beta.$$
Then $$\E\big[A_{31}+A_{32}\big]\lesssim R^{p-2p\beta}(\e|\log \e|^{\frac12})^{2\beta p},$$ which concludes the proof for $r<p/(2s+1)$.

In the frontier case $s+\frac12-\frac{p}{2r}=0$, we obtain
\begin{eqnarray*}
A_{32}
& \lesssim &R^r\e ^{p-r}  j_0^{(p-r)/2}j_0
 \lesssim R^r\e ^{p-r} |\log(\e)|^{1+(p-r)/2}
\end{eqnarray*} 
and then, using $p-r=2p\beta$,   $$\E\big[A_{31}+A_{32}\big]\lesssim R^{p-2p\beta}\e^{2\beta p}|\log \e|^{1+\beta p}.$$
\subsection{Proofs of results of Section~\ref{sec:regression}}\label{sec:proofTheoLp}
This section is devoted to the proofs of results of Section~\ref{sec:regression} and in particular to the proof of Theorem~\ref{ratesLp}. We first give the proof of intermediary technical results stated in Section~\ref{sec:regression}. 
\subsubsection{Proof of Proposition~\ref{prop:noise} }\label{sec:proofnoiseok}
The proof of Proposition~\ref{prop:noise} needs following lemmas. The first one recalls classical facts about Orlicz norms.
\begin{lemma} \label{lem:orlicz}Let $\xi$ be a sub-Gaussian random variable.   
\begin{enumerate}
\item $\left(\E|\xi|^p\right)^{1/p}\leq 2\sqrt{p}\|\xi\|_{\psi_2}.$
\item $\||\xi|^p\|_{\psi_{2/p}} = \|\xi\|_{\psi_2}^p.$
\item Let $X=|\xi|^p-\E|\xi|^p.$ There exists  $C_1$ a positive constant only depending on $p$ such that $\|X\|_{\psi_{2/p}} \leq C_1 \|\xi\|_{\psi_2}^p$.
\end{enumerate}
\end{lemma}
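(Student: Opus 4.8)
The plan is to prove the three items in the order (2), (1), (3). Item (2) is a one-line computation from the definition of the Orlicz norms, item (1) is the classical moment bound for sub-Gaussian variables, and item (3) combines the two. The only point needing care is that for $p>2$ the index $2/p$ is less than $1$, so $\|\cdot\|_{\psi_{2/p}}$ is merely a quasi-norm; I would get around this with the quasi-triangle inequality
\[
\|U+V\|_{\psi_r}\le 2^{1/r}\big(\|U\|_{\psi_r}+\|V\|_{\psi_r}\big),
\]
valid for every $r>0$: it is the ordinary triangle inequality when $r\ge 1$, and for $0<r<1$ it follows from the subadditivity of $t\mapsto t^r$ (so that $\exp((|U+V|/\eta)^r)\le\exp(|U|^r/\eta^r)\exp(|V|^r/\eta^r)$) together with the Cauchy--Schwarz inequality.

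For item (2): since $(|\xi|^p/\eta)^{2/p}=|\xi|^2/\eta^{2/p}$, the condition $\E[\exp((|\xi|^p/\eta)^{2/p})]\le 2$ is exactly $\E[\exp(|\xi|^2/\eta^{2/p})]\le 2$, which holds if and only if $\eta^{2/p}\ge\|\xi\|_{\psi_2}^2$; taking the infimum over admissible $\eta$ gives $\||\xi|^p\|_{\psi_{2/p}}=\|\xi\|_{\psi_2}^p$. For item (1): Markov's inequality applied to $\exp((|\xi|/\|\xi\|_{\psi_2})^2)$ yields $\P(|\xi|\ge t)\le 2\exp(-t^2/\|\xi\|_{\psi_2}^2)$ for $t\ge 0$, whence, after the substitution $s=\|\xi\|_{\psi_2}^p u^{p/2}$,
\[
\E|\xi|^p=\int_0^\infty\P(|\xi|\ge s^{1/p})\,ds\le 2\int_0^\infty\exp\!\big(-s^{2/p}/\|\xi\|_{\psi_2}^2\big)\,ds=p\,\Gamma(p/2)\,\|\xi\|_{\psi_2}^p.
\]
It then remains to see that $p\,\Gamma(p/2)=2\,\Gamma(p/2+1)\le(2\sqrt p)^p$ for all $p\ge 1$: for $1\le p\le 2$, log-convexity of $\Gamma$ and $\Gamma(1)=\Gamma(2)=1$ give $\Gamma\le 1$ on $[1,2]\supset[3/2,2]$, so $2\Gamma(p/2+1)\le 2\le(2\sqrt p)^p$; for $p\ge 2$ the elementary bound $\Gamma(x+1)\le x^x$ ($x\ge 1$) gives $2\Gamma(p/2+1)\le 2(p/2)^{p/2}=2^{1-p/2}p^{p/2}\le p^{p/2}\le(2\sqrt p)^p$. (Item (1) may alternatively be quoted from Proposition~2.5.2 of \cite{vershynin2018}.)

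For item (3): set $X=|\xi|^p-\E|\xi|^p$. The Orlicz norm of the deterministic number $\E|\xi|^p$ equals $\|\E|\xi|^p\|_{\psi_{2/p}}=\E|\xi|^p/(\ln 2)^{p/2}$, which by item (1) is at most $\big(2\sqrt p/\sqrt{\ln 2}\big)^p\|\xi\|_{\psi_2}^p$. Combining this with item (2) and the quasi-triangle inequality above taken with $r=2/p$,
\[
\|X\|_{\psi_{2/p}}\le 2^{p/2}\Big(\big\||\xi|^p\big\|_{\psi_{2/p}}+\big\|\,\E|\xi|^p\,\big\|_{\psi_{2/p}}\Big)\le 2^{p/2}\Big(1+\big(2\sqrt p/\sqrt{\ln 2}\big)^p\Big)\|\xi\|_{\psi_2}^p,
\]
so the assertion holds with $C_1=2^{p/2}\big(1+(2\sqrt p/\sqrt{\ln 2})^p\big)$, a constant depending only on $p$. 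I do not foresee any real obstacle: everything is elementary, the only mildly delicate step being the use of the quasi-triangle inequality to absorb the non-subadditivity of $\|\cdot\|_{\psi_{2/p}}$ when $p>2$, which is dealt with in the first paragraph.
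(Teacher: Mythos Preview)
Your proof is correct and follows essentially the same route as the paper: item~(2) from the definition, item~(1) as the standard sub-Gaussian moment bound (the paper simply cites Vershynin's Proposition~2.5.2), and item~(3) via the centering argument ``$\|Y-\E Y\|_{\psi_{2/p}}\le C_1\|Y\|_{\psi_{2/p}}$'' applied to $Y=|\xi|^p$, exactly as the paper sketches by analogy with Vershynin's Lemma~2.6.6. If anything, your version is more careful than the paper's, since you explicitly justify the quasi-triangle inequality for $\|\cdot\|_{\psi_{2/p}}$ when $p>2$ (where $2/p<1$), a point the paper glosses over by simply writing ``triangular inequality''.
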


\begin{proof}[Proof of Lemma~\ref{lem:orlicz}]

\begin{enumerate}
\item See \cite{vershynin2018} Proposition 2.5.2 (sub-Gaussian properties) and its proof, as well as Definition 2.5.5.
\item This comes directly from the definitions of the Orlicz norms. 
\item We can prove that for any variable $Y$ we have $\|Y-\E Y\|_{\psi_{2/p}}\leq C_1\|Y\|_{\psi_{2/p}}$ similarly as  Lemma 2.6.6  in \cite{vershynin2018} (it uses the triangular inequality and the fact that $\E|Y|\leq C(p)\|Y\|_{\psi_{2/p}}$). Then we take $Y=|\xi|^p$ and we use the previous point.
\end{enumerate}
\end{proof}
In Section~\ref{sec:regression}, we are faced with non identically distributed variables. In this case, we use the following result, derived from Theorem~\ref{theo:concentration}.
\begin{lemma}\label{coro:concentrationpasid}
Let $p\geq 1$. Assume that the $\xi_{\lambda}$'s are centered independent sub-Gaussian variables.  
We assume that there exists a positive constant  $\tau$ such that for any $\lambda \in \mathcal{I},\; \|\xi_{\lambda}\|_{\psi_2} \leq \tau$. Then,  for all $\lambda \in I$, $\left(\E|\xi_{\lambda}|^p\right)^{1/p}\leq 2\sqrt{p}\tau$.
Moreover, denoting $$Z:= \sum_{\lambda \in \mathcal{I}} |\xi_\lambda|^p,$$ we have $$\E(Z)\leq \sigma_p^{p} D,$$ with $\sigma_p:= 2 \sqrt{p}\tau$ and $D=\mbox{card}(\mathcal{I})$. Furthermore, 
for any $x\geq 1$,
with probability larger than $1-2\exp(-x)$,
\begin{equation*}
\sum_{\lambda \in \mathcal{I}} |\xi_\lambda|^p < \frac{3}{2}\sigma_p^pD+\kappa_pD^{\big(1-\frac{p}{2}\big)_+}x^{\frac{p}{2}},
\end{equation*}
where $\kappa_p$ is a positive constant only depending on $p$ 
and $\sigma_p$.
\end{lemma}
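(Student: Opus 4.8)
The plan is to derive the three claims successively from Lemma~\ref{lem:orlicz} and Theorem~\ref{theo:concentration}, the last one being essentially a repetition of the argument already used to deduce Corollary~\ref{coro:concentration}, with the single care that the Orlicz bounds on $|\xi_\lambda|^p-\E|\xi_\lambda|^p$ are made uniform in $\lambda$.

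First I would observe that the bound $\left(\E|\xi_\lambda|^p\right)^{1/p}\leq 2\sqrt{p}\tau$ is nothing but the first point of Lemma~\ref{lem:orlicz} combined with $\|\xi_\lambda\|_{\psi_2}\leq\tau$; raising to the power $p$ and summing over $\lambda\in\mathcal{I}$ gives $\E(Z)=\sum_{\lambda\in\mathcal{I}}\E|\xi_\lambda|^p\leq D(2\sqrt{p}\tau)^p=\sigma_p^pD$, which is the second claim.

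For the deviation bound, set $b_\lambda=\big\||\xi_\lambda|^p-\E|\xi_\lambda|^p\big\|_{\psi_{2/p}}$ as in Theorem~\ref{theo:concentration}. By the third point of Lemma~\ref{lem:orlicz} and $\|\xi_\lambda\|_{\psi_2}\leq\tau$ one has $b_\lambda\leq C_1\tau^p$ for every $\lambda\in\mathcal{I}$, hence $\|b\|_{\ell_2}\leq C_1\tau^p\sqrt{D}$; moreover, distinguishing $p\geq 2$ (where $(1-p/2)_+=0$, so the exponent in Theorem~\ref{theo:concentration} corresponds to the $\ell_\infty$-norm) from $p<2$ (where it corresponds to $\ell_{2/(2-p)}$), one gets in both cases $\|b\|_{\ell_{1/(1-p/2)_+}}\leq C_1\tau^pD^{(1-p/2)_+}$. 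Plugging these two bounds into the first inequality of Theorem~\ref{theo:concentration} and using $\E(Z)\leq\sigma_p^pD$ yields, for every $x>0$, with probability at least $1-2e^{-x}$,
$$Z<\sigma_p^pD+c_{1,p}\sqrt{Dx}+c_{2,p}D^{(1-p/2)_+}x^{p/2},$$
for constants $c_{1,p},c_{2,p}$ depending only on $p$ and $\sigma_p$ (recall $\tau=\sigma_p/(2\sqrt{p})$ and that the constants $d_{1,p},d_{2,p}$ of Theorem~\ref{theo:concentration} are explicit).

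Finally, exactly as in the proof of Corollary~\ref{coro:concentration}, I would absorb the middle term into $\tfrac12\sigma_p^pD$: writing $c_{1,p}\sqrt{Dx}\leq\tfrac12\sigma_p^pD+\tfrac{c_{1,p}^2}{2\sigma_p^p}x$ and then bounding $x\leq x^{p/2}$ when $p\geq 2$, or $x\leq D^{1-p/2}x^{p/2}$ when $p<2$ and $x\leq D$, or directly $\sqrt{Dx}\leq D^{1-p/2}x^{p/2}$ when $p<2$ and $x>D$ — all licit since $x\geq 1$ — one obtains $Z<\tfrac32\sigma_p^pD+\kappa_pD^{(1-p/2)_+}x^{p/2}$ with $\kappa_p$ depending only on $p$ and $\sigma_p$. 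The only point requiring a little attention is this elbow case analysis around $p=2$, but it is strictly parallel to Corollary~\ref{coro:concentration}, so I expect no genuine obstacle here.
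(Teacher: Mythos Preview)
Your proposal is correct and follows essentially the same approach as the paper: use Lemma~\ref{lem:orlicz} to obtain the moment bound and the uniform control $b_\lambda\leq C_1\tau^p$, plug this into the first inequality of Theorem~\ref{theo:concentration}, and then reproduce verbatim the absorption argument of Corollary~\ref{coro:concentration}. The paper's own proof is slightly terser (it writes the $\ell_q$-bound for generic $q$ rather than distinguishing $p\lessgtr 2$) but the content is identical.
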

\begin{proof}[Proof of Lemma~\ref{coro:concentrationpasid}]
Using the first point of Lemma~\ref{lem:orlicz} and our assumption on uniform subgaussianity, we have:
$$\E(Z)=\sum_{\lambda \in \mathcal{I}}\E(|\xi_{\lambda}|^p)
\leq \sum_{k \in \mathcal{I}}(2 \sqrt{p}\|\xi_{\lambda}\|_{\psi_2})^{p}
\leq (2 \sqrt{p}\tau)^{p} |\mathcal{I}|=\sigma_p^pD.
$$
Now we apply Theorem~\ref{theo:concentration}. Recall that $b_\lambda=\|X_{\lambda}\|_{\psi_{2/p}}$ with 
$X_{\lambda}=|\xi_{\lambda}|^p-\E|\xi_{\lambda}|^p$.
The third point of Lemma~\ref{lem:orlicz} gives:
$$\|b\|_{\ell_q}^q=\sum_{\lambda\in \mathcal{I}}\|X_{\lambda}\|_{\psi_{2/p}}^q\leq \sum_{\lambda \in \mathcal{I}} (C_1 \|\xi_{\lambda}\|_{\psi_2}^p)^q\leq (C_1 \tau^p)^q |\mathcal{I}|$$
so that $\|b\|_{\ell_q}\leq C_1\tau^p D^{1/q}$.
Theorem~\ref{theo:concentration} gives that with probability larger than $1-2e^{-x}$,
\begin{eqnarray*}
|Z-\E(Z)|&\leq &d_{1,p}\|b\|_{\ell_2}\sqrt{x}+d_{2,p}\|b\|_{\ell_{1/(1-p/2)_+}}x^{p/2}
\\
&\leq &d_{1,p}C_1\tau^p\sqrt{Dx}+d_{2,p}C_1\tau^p
D^{(1-p/2)_+}x^{p/2}\\
&\leq &d'_{1,p}\sigma_p^p\sqrt{Dx}+d'_{2,p}\sigma_p^p
D^{(1-p/2)_+}x^{p/2}
\end{eqnarray*}
with $d'_{ip}=d_{ip} C_1/(2\sqrt{p})^p$.
Recalling that 
$\E(Z)\leq \sigma_p^{p} D$, we can obtain
with probability larger than $1-2\exp(-x)$,
\begin{equation*}
Z\leq \frac{3}{2}\sigma_p^pD+\kappa_pD^{\big(1-\frac{p}{2}\big)_+}x^{\frac{p}{2}},
\end{equation*}
with the same proof as the one of Corollary 2.4.
\end{proof}
We now prove Proposition~\ref{prop:noise}.
Let us fix $j\geq -1$ and $k\in K_j$. Remember that 
 $$\xi_{jk}=\frac1{\sqrt{n}}\sum_{i=1}^n \eta_i\varphi_{jk} (t_i).$$
The $\xi_{jk}$'s are centered sub-Gaussian random variables. Indeed, since  the $\eta_i$'s are i.i.d. centered sub-Gaussian random variables, there exists a constant $c$ such that for any $t\in\R$,
$$\E[\exp(t\eta_i)]\leq\exp (ct^2), \quad i=1,\ldots,n$$
(see Proposition~2.5.2 of \cite{vershynin2018}, actually $c=2\|\eta_1\|_{\psi_2}$) and
\begin{align*}
\E[\exp(t\xi_{jk})]\leq \exp \Bigg(ct^2\frac{\sum_{i=1}^n \varphi_{jk}^2 (t_i)}{n}\Bigg),\quad t\in\R.
\end{align*}
It remains to prove that $\frac{\sum_{i=1}^n \varphi_{jk}^2 (t_i)}{n}$ is bounded by a constant independent of $n$ and $(j,k)$.

In the sequel, we assume that $j\geq 0$ so that $\varphi_{jk}=\psi_{jk}$. Remember that the father and mother wavelets $\phi$ and $\psi$ are assumed to be supported by the compact interval $[A,B]$. Therefore, if $\psi_{jk}(t_i)\not=0$ then
$$A\leq 2^j \frac{i}{n}-k\leq B$$
and the size of the set of $i$'s such that $\psi_{jk}(t_i)\not=0$ is not larger than $n2^{-j}$ up to a contant  only depending on $\psi$. This yields that
$$\sum_{i=1}^n \frac{\psi_{jk}^2 (t_i)}{n}=\frac{2^j}{n}\sum_{i=1}^n\psi^2 \Big( 2^j \frac{i}{n}-k\Big)$$
is bounded by a constant only depending on $\psi$. The proof for the case $j=-1$ is similar.  This shows that the $\xi_{jk}$'s are sub-Gaussian variables. Moreover, using property 4 of Proposition~2.5.2 of \cite{vershynin2018}, this ensures the existence of a positive constant $\tau$ only depending on $\phi$, $\psi$ and $\|\eta_1\|_{\psi_2}$ such that for all $j\geq -1$ and for all $k\in K_j$,
$\|\xi_{jk}\|_{\psi_2}\leq \tau$.
\label{tau}

We wish to apply Theorem~\ref{theo:concentration}. However, the $\xi_{jk}$'s  are not independent. 
But, still assuming that $j\geq 0$, we have:
$$\xi_{j,k}=\frac1{\sqrt{n}}\sum_{i=1}^n \eta_i\psi_{j,k} (t_i)=\frac1{\sqrt{n}}\sum_{i\in M_{jk}} \eta_i2^{j/2}\psi(2^jt_i-k),$$
where
$$M_{jk}=\big\{i:\ A\leq 2^jt_i-k\leq B\big\}=\big\{i:\  2^jt_i-B\leq k\leq  2^jt_i-A\big\}.$$
Finally, if $\xi_{j,k}$ depends on $(t_i,\eta_i)$, it means that $i\in M_{jk}$.
Now, we take $k'>k+B-A$. If $i\in M_{jk}$ then $k\geq 2^jt_i-B$, which yields that $k'>2^jt_i-A$ meaning that $i\not\in  M_{jk'}$. Therefore $\xi_{j,k'}$ does not depend on $(t_i,\eta_i)$. We conclude that if $k'>k+B-A$ then $\xi_{j,k}$ and $\xi_{j,k'}$ are independent. Now, we build a deterministic partition of ${\mathcal I}_j$
$${\mathcal I}_j={\mathcal I}_{j1}\cup\cdots\cup {\mathcal I}_{jK},$$
where the set $({\mathcal I}_{j\ell})_{\ell}$'s are built so that if for some given $\ell$, if we take two distinct elements $k$ and $k'$ of ${\mathcal I}_{j\ell}$, then $|k-k'|>B-A$. Therefore $\xi_{j,k}$ and $\xi_{j,k'}$ are independent. In particular, we can take $K$, the size of the partition, of order $B-A$ and the size of ${\mathcal I}_{j\ell}$ is smaller than~$|{\mathcal I}_{j}|$.
\\
Now, we apply Lemma~\ref{coro:concentrationpasid}, with $\tau=\sup_{jk}\|\xi_{jk}\|_{\psi_2}$. Let $x\geq 1$.
For any $1\leq \ell \leq K$
\begin{equation*}
\P\left(\sum_{k \in \mathcal{I}_{j\ell}} |\xi_{jk}|^p\geq  \frac{3}{2}\sigma_p^p|{\mathcal I}_{j\ell}|+\kappa_p|{\mathcal I}_{j\ell}|^{\big(1-\frac{p}{2}\big)_+}x^{\frac{p}{2}}\right)\leq 2 e^{-x}.
\end{equation*}
Then, with probability larger that $1-2Ke^{-x}$
\begin{align*}
\sum_{k \in \mathcal{I}_{j}} |\xi_{jk}|^p=\sum_{\ell=1}^K\sum_{k \in \mathcal{I}_{j\ell}} |\xi_{jk}|^p < \frac{3}{2}\sigma_p^p\sum_{\ell=1}^K|{\mathcal I}_{j\ell}|+\kappa_p\sum_{\ell=1}^K|{\mathcal I}_{j\ell}|^{\big(1-\frac{p}{2}\big)_+}x^{\frac{p}{2}}\\
<   \frac{3}{2}\sigma_p^p|{\mathcal I}_{j}|+\kappa_pK^{1-\big(1-\frac{p}{2}\big)_+}\left(\sum_{\ell=1}^K|{\mathcal I}_{j\ell}|\right)^{\big(1-\frac{p}{2}\big)_+}x^{\frac{p}{2}}
\end{align*}
using the concavity of $x\mapsto x^{\big(1-\frac{p}{2}\big)_+}$. This gives
\begin{equation*}
\P\left(\sum_{k \in \mathcal{I}_{j}} |\xi_{jk}|^p\geq  \frac{3}{2}\sigma_p^p|{\mathcal I}_{j}|+\kappa_pK^{1-\big(1-\frac{p}{2}\big)_+}|{\mathcal I}_{j}|^{\big(1-\frac{p}{2}\big)_+}x^{\frac{p}{2}}\right)\leq 2 Ke^{-x}.
\end{equation*}
The proof for the case $j=-1$ is similar.
\subsubsection{Proof of Lemma~\ref{lemma:approx}}
\label{sec:prooflemmaapprox}
Observe that, since $\phi$ and $\psi$ are assumed to be $C^{M+1}$ and compactly supported, then Corollary~5.5.2 of \cite{MR1162107} ensures that $\psi$ is orthogonal to polynomials of degree less or equal to~$M$. Therefore Assumption~1 of  \cite{MR1426459} is satisfied. We then use the following result.
\begin{proposition}[Proposition 2 of \cite{MR1426459}]\label{PropRemTerm} We assume that $\phi$ and $\psi$ are $C^{M+1}$ and that $f\in{\mathcal B}^s_{r,\infty}(R)$, with $1/r<s< M+1$. Then, if $p\geq r$,
$$\Big\|\sum_{\lambda\in\Lambda^{(N)}}\theta_{jk}\varphi_{jk}-f\Big\|_{\Lp}\lesssim RN^{-(s-1/r+1/p)},$$
where $N$ is the cardinal of $\Lambda^{(N)}$.
\end{proposition}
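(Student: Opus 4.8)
The plan is to interpose the genuine truncated wavelet expansion $P_Jf:=\sum_{\lambda\in\Lambda^{(N)}}\langle f,\varphi_{jk}\rangle\varphi_{jk}$ between $f$ and the quadrature–based sum $\tilde f_N:=\sum_{\lambda\in\Lambda^{(N)}}\theta_{jk}\varphi_{jk}$, and to bound
\[
\big\|\tilde f_N-f\big\|_{\Lp}\leq\big\|P_Jf-f\big\|_{\Lp}+\big\|\tilde f_N-P_Jf\big\|_{\Lp}
\]
by handling an approximation term and a quadrature term. Throughout set $s':=s-\tfrac1r+\tfrac1p$, which is $>0$ since $s>1/r$, and recall $N=2^{J+1}$. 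For the first term I would apply Lemma~\ref{lp-besov}: since $P_Jf-f$ has wavelet coefficients $\langle f,\varphi_{jk}\rangle$ for $j>J$ and $0$ otherwise,
\[
\big\|P_Jf-f\big\|_{\Lp}^{p\wedge2}\lesssim\sum_{j>J}2^{j(p\wedge2)(\frac12-\frac1p)}\Big(\sum_{k\in K_j}|\langle f,\varphi_{jk}\rangle|^{p}\Big)^{\frac{p\wedge2}{p}}.
\]
Using $\|\cdot\|_{\ell_p}\leq\|\cdot\|_{\ell_r}$ (valid because $p\geq r$) together with the Besov bound $(\sum_k|\langle f,\varphi_{jk}\rangle|^{r})^{1/r}\leq R\,2^{-j(s+\frac12-\frac1r)}$, each summand becomes $R^{p\wedge2}2^{-j(p\wedge2)s'}$, and summing the geometric series over $j>J$ gives $\|P_Jf-f\|_{\Lp}\lesssim R\,N^{-s'}$. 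This half is routine nonlinear approximation theory.

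For the quadrature term write $\tilde f_N-P_Jf=\sum_{\lambda\in\Lambda^{(N)}}(\theta_{jk}-\langle f,\varphi_{jk}\rangle)\varphi_{jk}$, where $\theta_{jk}-\langle f,\varphi_{jk}\rangle=\tfrac1n\sum_{i=1}^{n}(f\varphi_{jk})(t_i)-\int_0^1 f\varphi_{jk}$ is the left–rectangle–rule error for $f\varphi_{jk}$. The key ingredient is the $M+1$ vanishing moments of $\psi$ recalled above (Corollary~5.5.2 of \cite{MR1162107}, so that Assumption~1 of \cite{MR1426459} holds). On the support of each $\varphi_{jk}$ ($j\geq 0$) I would replace $f$ by a polynomial $\pi_{jk}$ of degree $\leq M$ approximating it locally: the product $\pi_{jk}\varphi_{jk}$ is $C^{M+1}$, compactly supported, with all derivatives up to order $M+1$ vanishing at the endpoints of its support, so Euler--Maclaurin annihilates every boundary correction term and its rectangle–rule error is a high-order remainder $\lesssim n^{-(M+1)}\int|(\pi_{jk}\varphi_{jk})^{(M+1)}|$, controlled by Leibniz and Bernstein-type bounds, while $\int\pi_{jk}\varphi_{jk}=0$ by the vanishing moments. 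The contribution of $f-\pi_{jk}$, to both the Riemann sum and the integral, is then bounded by the local polynomial–approximation error of $f$, which for $f\in\mathcal B^s_{r,\infty}(R)$ decays at the right rate in the scale $2^{-j}$; collecting these across $k$ through the $\L_p$-stability of compactly supported wavelets one gets a per-level contribution $\lesssim R\,2^{-js'}$, and summing over $-1\leq j\leq J$ (the coarsest level $j=-1$ being treated by a direct rectangle–rule estimate, and the relation between the mesh $1/n$ and the finest scale $2^{-J}=N^{-1}$ being used to absorb the Euler--Maclaurin remainders, with $s<M+1$ ensuring $\varphi$ is smooth enough) yields $\|\tilde f_N-P_Jf\|_{\Lp}\lesssim R\,N^{-s'}$. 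Adding the two estimates gives the claim.

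The main obstacle is precisely this quadrature step: the rectangle rule is only first order, so no coefficient-by-coefficient bound suffices, and the sharp exponent is recovered only by combining (i) the hidden higher-order accuracy that Euler--Maclaurin grants smooth, compactly supported, vanishing-moment wavelets, (ii) the Besov-type control of the local polynomial-approximation error of the non-smooth factor $f$, and (iii) careful tracking of the ratio between the grid mesh and the finest scale. A further technical nuisance is that the CDV construction uses boundary-adapted wavelets near $0$ and $1$, whose supports touch the interval endpoints, so the vanishing of derivatives at the support edges and the associated Euler--Maclaurin bookkeeping must be adapted there; the approximation-error half of the argument, by contrast, is entirely standard.
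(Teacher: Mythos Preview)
The paper does not prove this proposition at all: it is quoted verbatim as Proposition~2 of \cite{MR1426459}, after a one-line check (via Corollary~5.5.2 of \cite{MR1162107}) that the vanishing-moment hypothesis of that reference is met. So your attempt is not a variant of the paper's argument but a reconstruction of an external result that the authors deliberately treat as a black box.

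Your approximation half is clean and correct: Lemma~\ref{lp-besov} together with $\|\cdot\|_{\ell_p}\leq\|\cdot\|_{\ell_r}$ and the Besov decay of the true wavelet coefficients gives $\|P_Jf-f\|_{\Lp}\lesssim R\,N^{-s'}$ exactly as you write.

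The quadrature half, however, has a genuine gap in the final accounting. You assert that the $(f-\pi_{jk})$ contribution yields a per-level bound $\lesssim R\,2^{-js'}$ and that ``summing over $-1\leq j\leq J$ \dots\ yields $\|\tilde f_N-P_Jf\|_{\Lp}\lesssim R\,N^{-s'}$''. But $\sum_{j=-1}^{J}2^{-js'}$ is a convergent geometric series dominated by its first term, so this sums to $O(R)$, not $R\,N^{-s'}=R\,2^{-(J+1)s'}$. In the regime of the paper the finest resolution $2^{J}$ is of the same order as the sample size $n$, so at the top levels each wavelet support contains only $O(1)$ design points and the quadrature error per coefficient is \emph{not} small; the sharp exponent in \cite{MR1426459} comes from a careful level-dependent balance between (i) the Euler--Maclaurin remainder, which behaves like $(2^{j}/n)^{M+1}$ and is large near $j=J$, and (ii) the Besov-controlled local oscillation of $f$, which is large for small $j$. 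Your sketch names both ingredients but does not carry out this balance, and the stated per-level rate together with the stated summation cannot both be right. If you want a self-contained proof you will need to track explicitly how the quadrature error at level $j$ depends on the ratio $2^{j}/n$, and show that the worst level (near $j=J$, where $2^{J}\asymp n\asymp N$) produces exactly $R\,N^{-s'}$; alternatively, do as the paper does and cite \cite{MR1426459}.
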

Now we consider the three cases:\\
- If $r<\frac{p}{2s+1}$, since $s>1/r$, 
\begin{align*}
\Big\|\sum_{\lambda\in\Lambda^{(N)}}\theta_{jk}\varphi_{jk}-f\Big\|_{\Lp}^p&\lesssim R^pN^{-p(s-\frac{1}{r}+\frac{1}{p})}
\\& 
\lesssim R^p\big(|\log(\e)|\e^{2}\big)^{p(s-\frac{1}{r}+\frac{1}{p})}\\
&\lesssim R^p\big(|\log(\e)|\e^{2}\big)^{p\frac{s-\frac{1}{r}+\frac{1}{p}}{2s+1-\frac2{r}}}. 
\end{align*}
- If $\frac{p}{2s+1}\leq r\leq p$, observe that
\begin{align*}
r\geq\frac{p}{2s+1}&\iff p\leq r(1+2s)
\iff -1/p\leq -1/(r(1+2s))\\
&\iff 1/r-1/p\leq 1/r(1-1/(1+2s))\\
&\iff 1/r-1/p\leq 1/r\times 2s/(1+2s).
\end{align*}
Therefore, using $s>1/r$, we have:
$$1/r-1/p<2s^2/(1+2s),$$
which means that
$$s-1/r+1/p>s/(1+2s)$$
and
\begin{align}\label{randp}
\Big\|\sum_{\lambda\in\Lambda^{(N)}}\theta_{jk}\varphi_{jk}-f\Big\|_{\Lp}^p&\lesssim R^p\big(|\log(\e)|\e^{2}\big)^{p(s-\frac{1}{r}+\frac{1}{p})}\nonumber\\
&\lesssim R^p\e^{\frac{2ps}{1+2s}} .
\end{align}
- If $r\geq p$, we have, since functions are compactly supported,
\begin{align*}
\Big\|\sum_{\lambda\in\Lambda^{(N)}}\theta_{jk}\varphi_{jk}-f\Big\|_{\Lp}&\lesssim \Big\|\sum_{\lambda\in\Lambda^{(N)}}\theta_{jk}\varphi_{jk}-f\Big\|_{{\mathbb L}_r}\lesssim R\e^{\frac{2s}{1+2s}},
\end{align*}
using \eqref{randp}.
\subsubsection{Proof of Lemma~\ref{lp-besov}}
\label{sec:lp-besov}
In the sequel, we assume that
$\|g\|_{{\mathcal B}^0_{p,p\wedge 2}}^{p\wedge 2}<\infty$.\\[0.5cm]
Section 9.2 of Daubechies (1992) shows that for $1< p<\infty$
\begin{align}
g\in\L_p&\iff \Big[\sum_{j\geq -1}\sum_{k\in K_j}\big|\langle g,\varphi_{jk}\rangle\big|^2\varphi_{jk}^2(\cdot)\Big]^\frac12\in\L_p\label{Daub1}\\
&\iff\Big[\sum_{j\geq -1}\sum_{k\in K_j}\big|\langle g,\varphi_{jk}\rangle\big|^22^j1_{[2^{-j}k,2^{-j}(k+1)]}(\cdot)\Big]^\frac12\in\L_p\label{Daub2}
\end{align}
\textbf{Case $1<p\leq 2$.} We use \eqref{Daub1}. Since $\|\cdot\|_{\ell_2}\leq \|\cdot\|_{\ell_p},$
\begin{align*}
\Big[\sum_{j\geq -1}\sum_{k\in K_j}\big|\langle g,\varphi_{jk}\rangle\big|^2\varphi_{jk}^2\Big]^\frac{p}{2}\leq\sum_{j\geq -1}\sum_{k\in K_j}\big|\langle g,\varphi_{jk}\rangle\big|^p\big|\varphi_{jk}\big|^p
\end{align*}
and
\begin{align*}
\|g\|_{\L_p}^p&\lesssim
\int\Big[\sum_{j\geq -1}\sum_{k\in K_j}\big|\langle g,\varphi_{jk}\rangle\big|^2\varphi_{jk}^2(x)\Big]^\frac{p}{2}dx\\
&\leq\sum_{j\geq -1}\sum_{k\in K_j}\big|\langle g,\varphi_{jk}\rangle\big|^p\int\big|\varphi_{jk}(x)\big|^pdx\\
&\lesssim \sum_{j\geq -1}2^{j(\frac{p}{2}-1)}\sum_{k\in K_j}\big|\langle g,\varphi_{jk}\rangle\big|^p=\|g\|_{{\mathcal B}^0_{p,p}}^{p}<\infty.
\end{align*}
This shows that $\|g\|_{\L_p}\lesssim \|g\|_{{\mathcal B}^0_{p,p\wedge 2}}<\infty$.\\ [0.5cm]
\textbf{Case $p\geq 2$.} We use \eqref{Daub2}
We recall the generalized Minkowski inequality: Let $(X,{\mathcal A},\mu)$ and $(Y,{\mathcal B},\nu)$ two $\sigma$-finite measure spaces and $F$ a measurable function. Then, for any $q\in [1,+\infty)$,
$$\Bigg(\int_X\Big(\int_Y|F(x,y)|d\nu(y)\Big)^qd\mu(x)\Bigg)^{\frac{1}{q}}\leq\int_Y\Big(\int_X|F(x,y)|^qd\mu(x)\Big)^{\frac{1}{q}}d\nu(y).$$
We apply this inequality with $\mu$ the Lebesgue measure and $\nu$ the counting measure. We take $q=p/2$ and $F=\sum_{k\in K_j}\big|\langle g,\varphi_{jk}\rangle\big|^22^j1_{[2^{-j}k,2^{-j}(k+1)]}$. We have
\begin{align*}
\|g\|_{\L_p}^2&\lesssim
\Bigg(\int\Big(\sum_{j\geq -1}\sum_{k\in K_j}\big|\langle g,\varphi_{jk}\rangle\big|^22^j1_{[2^{-j}k,2^{-j}(k+1)]}(x)\Big)^{\frac{p}{2}}dx\Bigg)^{\frac{2}{p}}\\
&\leq\sum_{j\geq -1}\Bigg( \int \Big(\sum_{k\in K_j}\big|\langle g,\varphi_{jk}\rangle\big|^22^j1_{[2^{-j}k,2^{-j}(k+1)]}(x)\Big)^{\frac{p}{2}} dx\Bigg)^{\frac{2}{p}}\\
&=\sum_{j\geq -1}\Bigg( \int \sum_{k\in K_j}\big|\langle g,\varphi_{jk}\rangle\big|^p2^{j\frac{p}{2}}1_{[2^{-j}k,2^{-j}(k+1)]}(x) dx\Bigg)^{\frac{2}{p}}\\
&\leq\sum_{j\geq -1}\Bigg(\sum_{k\in K_j}\big|\langle g,\varphi_{jk}\rangle\big|^p2^{j(\frac{p}{2}-1)}\Bigg)^{\frac{2}{p}}\\
&\leq\sum_{j\geq -1}2^{j2(\frac{1}{2}-\frac{1}{p})}\Bigg(\sum_{k\in K_j}\big|\langle g,\varphi_{jk}\rangle\big|^p\Bigg)^{\frac{2}{p}}=\|g\|_{{\mathcal B}^0_{p, 2}}^{ 2}<\infty.
\end{align*}
This shows that $\|g\|_{\L_p}\lesssim \|g\|_{{\mathcal B}^0_{p,p\wedge 2}}<\infty$.\\ [0.5cm]
\textbf{Case $p=1$.}  Finally, we deal with the case $p=1$. We have
$$\|g\|_{{\mathcal B}^0_{p,p\wedge 2}}^{p\wedge 2}=\sum_{j\geq -1}2^{-j/2}\sum_{k\in K_j}\big|\langle g,\varphi_{jk}\rangle\big|$$
and, with $g=\sum_{j\geq -1}\sum_{k\in K_j}\langle g,\varphi_{jk}\rangle\varphi_{jk},$
\begin{align*}
\|g\|_{\L_1}&=\int\Big|\sum_{j\geq -1}\sum_{k\in K_j}\langle g,\varphi_{jk}\rangle\varphi_{jk}(x)\Big|dx\\
&\leq\sum_{j\geq -1}\sum_{k\in K_j}\big|\langle g,\varphi_{jk}\rangle\big|\int \big|\varphi_{jk}(x)\big|dx\\
&\lesssim\sum_{j\geq -1}2^{-j/2}\sum_{k\in K_j}\big|\langle g,\varphi_{jk}\rangle\big|=\|g\|_{{\mathcal B}^0_{1, 1}}<\infty.
\end{align*}
This shows that $\|g\|_{\L_1}\lesssim \|g\|_{{\mathcal B}^0_{1,1}}<\infty$.
\subsubsection{End of the proof of Theorem~\ref{ratesLp}}\label{sec:end}
The proof of Theorem~\ref{ratesLp} uses the following lemma which is a consequence of Proposition~2 of \cite{MR1426459} and the assumption $f\in{\mathcal B}^s_{r,\infty}(R)$. It uses that $\log_2(n)$ is an integer.\\
\begin{lemma}\label{lemma:thetabeta} If $f$ belongs to the Besov set ${\mathcal B}^s_{r,\infty}(R)$, then we have 
\begin{equation}\label{decroitheta}
\sum_{k\in K_j}|\theta_{jk}|^r\leq C R^r2^{-jr(s+1/2-1/r)},
\end{equation}
for $C$ a constant only depending on $\phi$, $\psi$, $s$ and $r$.
\end{lemma}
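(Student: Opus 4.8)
The plan is to read off the discrete estimate from Proposition~2 of \cite{MR1426459} (stated as Proposition~\ref{PropRemTerm} above) applied at two consecutive dyadic resolutions, using that $\log_2 n$ is an integer so that every resolution $2^j$, $-1\le j\le J$, is native to the sampling grid. For a cardinality $N'$, write $g_{N'}=\sum_{\lambda\in\Lambda^{(N')}}\theta_\lambda\varphi_\lambda$ for the reconstruction from the discretized coefficients, and recall that $\Lambda^{(2^j)}$ consists exactly of the levels $-1,\dots,j-1$. By orthonormality of the wavelet system, $\langle g_{2^{j}},\varphi_{jk}\rangle=0$ and $\langle g_{2^{j+1}},\varphi_{jk}\rangle=\theta_{jk}$ for every $k\in K_j$, so the level-$j$ block $h_j:=g_{2^{j+1}}-g_{2^{j}}$ is exactly $\sum_{k\in K_j}\theta_{jk}\varphi_{jk}$.

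First I would bound $\|h_j\|_{\L_r}$. Since $1/r<s<M+1$, Proposition~\ref{PropRemTerm} applies with exponent $p=r$ at the cardinalities $2^{j}$ and $2^{j+1}$; because $s-\tfrac1r+\tfrac1p=s$ when $p=r$, it yields $\|g_{2^{j'}}-f\|_{\L_r}\lesssim R\,2^{-j's}$ for $j'\in\{j,j+1\}$. Hence, by the triangle inequality, $\|h_j\|_{\L_r}\le\|g_{2^{j+1}}-f\|_{\L_r}+\|f-g_{2^{j}}\|_{\L_r}\lesssim R\,2^{-js}$.

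Next I would pass from the $\L_r$-norm of the single-level block $h_j$ to the $\ell_r$-norm of its coefficients. Because the wavelets are compactly supported, at each fixed level $j$ the family $\{\varphi_{jk}\}_{k\in K_j}$ is $\L_r$-stable with constants independent of $j$: after the change of variable $x\mapsto 2^{-j}x$ this reduces to the unit-scale statement $\|\sum_k c_k\varphi(\cdot-k)\|_{\L_r}\asymp\|c\|_{\ell_r}$, so $2^{j(r/2-1)}\sum_{k\in K_j}|\theta_{jk}|^r\lesssim\|h_j\|_{\L_r}^r$. Combining the two estimates,
\[
\sum_{k\in K_j}|\theta_{jk}|^r\lesssim 2^{j(1-r/2)}\|h_j\|_{\L_r}^r\lesssim 2^{j(1-r/2)}R^r2^{-jrs}=R^r\,2^{-jr(s+\frac12-\frac1r)},
\]
which is the asserted inequality with a constant depending only on $\phi,\psi,s,r$; the wavelet characterization of $\mathcal{B}^{s}_{r,\infty}$ (Corollary~9.1 of \cite{hkpt}) enters only implicitly, through Proposition~\ref{PropRemTerm}.

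The main obstacle is Proposition~\ref{PropRemTerm} itself, which must be granted as a black box: its proof controls the cubature error $\frac1n\sum_i(f\varphi_{jk})(i/n)-\int_0^1 f\varphi_{jk}$ uniformly over $k$ and over the admissible levels (those with $2^{j}$ not exceeding the sampling resolution), and this is precisely where the smoothness $s>1/r$ of $f$ (so that $f$ is continuous and has a controlled modulus of smoothness) and the $M+1$ vanishing moments of $\psi$ (guaranteed by $\psi\in C^{M+1}$ with compact support, via Corollary~5.5.2 of \cite{MR1162107}), together with the dyadic structure of the grid, are used. Once that input and the single-level Riesz equivalence are in hand, the remaining steps — one telescoping identity, one triangle inequality, one rescaling — are routine.
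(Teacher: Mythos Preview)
Your proof is correct and takes essentially the same approach as the paper, which merely sketches the argument as ``a consequence of Proposition~2 of \cite{MR1426459} and the assumption $f\in{\mathcal B}^s_{r,\infty}(R)$'' together with the dyadic-sampling hypothesis $\log_2 n\in\mathbb{N}$. Your telescoping of two consecutive applications of Proposition~\ref{PropRemTerm} at $p=r$ followed by the single-level $\mathbb{L}_r$--$\ell_r$ Riesz equivalence is exactly the natural way to unpack that sketch, and all the ingredients you invoke (the cardinality count $|\Lambda^{(2^j)}|=2^j$ giving levels $-1,\dots,j-1$, orthonormality, compact support for the scale-invariant stability) are available in the paper's setting.
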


Now, when $p\leq 2$, the proof of Theorem~\ref{ratesLp} follows from \eqref{Decomp-LP} combined with Proposition~\ref{prop:noise}, Theorem~\ref{rateslp}, Lemmas~\ref{lemma:thetabeta} and~\ref{lemma:approx} and Inequality~\eqref{lp-Lp<2}.

When $p> 2$, Inequality~\eqref{lp-Lp<2} does not hold, but Inequality~\eqref{lp-Lp>2} shows that we only have to bound 
\begin{equation}\label{def-A}
\left[\sum_{j= -1}^J\Bigg(\E\Big[2^{j(\frac{p}{2}-1)}\sum_{k\in K_j}\big|\hat \theta_{jk}^{(\hat m)}-\theta_{jk}\big|^p\Big]\Bigg)^{\frac{2}{p}}\right]^{\frac{p}{2}}
\end{equation}
to conclude.

Remember that $\hat{m}=\argmin_{m\in \mathcal{M}}Crit(m)$ with
$$Crit(m)=-\sum_{\la \in m}w_\lambda|Y_{\la}|^p+\pen(m)$$
and
$\pen(m)$ has the form $\pen(m)=2\e^p\sum_{j=1}^J\omega_j P_j(m_j)$ (see \eqref{pen2}).
This can be rewritten
$$Crit(m)=-\sum_{j\geq -1}2^{j(\frac{p}{2}-1)}\Big[\sum_{k\in K_j}|Y_{jk}|^p-2\e^pP_j(m_j)\Big],$$
so that $\hat{m}$ is the union of disjoint sets $\hat{m}_j$ obtained by maximizing
$$m_j\longmapsto Crit_j(m_j):=\sum_{k\in K_j}|Y_{jk}|^p-2\e^pP_j(m_j).$$
Let $m_j$ be some model in $\M_j$. Replacing for $j'\not=j$, $w_{j'}$ by 0 we obtain directly from Theorem~\ref{theo:oracle1}, for any model $m_j$,
$$\|\tilde \theta_{j.}-\theta_{j.}\|_p^p\leq M_{p}\|\hat \theta^{(m_j)}-\theta_{j.}\|_p^p +
2\Big[2V_p(\hat m_j)-\pen(\hat m_j)\Big]
- 2\Big[2V_p(m_j)-\pen(m_j)\Big],$$
which means
\begin{small}
$$\sum_{k\in K_j}\big|\tilde\theta_{jk}-\theta_{jk}\big|^p\leq M_{p}\sum_{k\in K_j}\big|\hat \theta_k^{(m_j)}-\theta_{jk}\big|^p +
4\e^p\Big[\sum_{k\in \hat m_j}|\xi_{jk}|^p-P_j(\hat m_j)\Big]
- 4\e^p\Big[\sum_{k\in m_j}|\xi_{jk}|^p-P_j(m_j)\Big].$$
\end{small}
With $Z(m_j)=\sum_{k\in m_j}|\xi_{jk}|^p$, mimicking the proof of Theorem~\ref{theo:oracleesp2}, by taking $q$ large enough,  
\begin{align*}
\sum_{j= -1}^J\Big(2^{j(\frac{p}{2}-1)}\sum_{k\in K_j}\E\Big[\big|\tilde\theta_{jk}-\theta_{jk}\big|^p\Big]\Big)^{\frac{2}{p}}&\lesssim \sum_{j\geq -1}\Big(2^{j(\frac{p}{2}-1)}\sum_{k\in K_j}\E\Big[\big|\hat \theta_k^{(m_j)}-\theta_{jk}\big|^p\Big]+\e^p2^{j(\frac{p}{2}-1)}P_j(m_j)\Big)^{\frac{2}{p}}\\
&\hspace{1cm}+ (\log N)^{1-\frac{2}{p}}\e^2\sum_{j= -1}^J\Big(2^{j(\frac{p}{2}-1)}\sum_{\substack{m_j\in \M_j\\m_j\neq\emptyset}}e^{-x_{m_j}}\Big)^{\frac{2}{p}}+O(\e^2).
\end{align*}
We have
\begin{align*}
2^{j(\frac{p}{2}-1)}\sum_{k\in K_j}\big|\hat \theta_k^{(m_j)}-\theta_{jk}\big|^p&=2^{j(\frac{p}{2}-1)}\sum_{k\in m_j} |Y_{jk}-\theta_{jk}|^p+2^{j(\frac{p}{2}-1)}\sum_{k \notin m_j} |0-\theta_{jk}|^p\\
&=:V_p(m_j)+B_p(m_j),
\end{align*}
with
\begin{equation}
B_p(m_j):=2^{j(\frac{p}{2}-1)}\sum_{k \notin m_j} |\theta_{jk}|^p\quad\mbox{and}\quad V_p(m_j):=\e^p\sum_{k \in m_j}2^{j(\frac{p}{2}-1)}  | \xi_{jk}|^p.
\end{equation}
This gives 
$$
\E[V_p(m_j)]\leq \e^p\sigma_p^p2^{j(\frac{p}{2}-1)}  |m_j|
$$
and 
\begin{align*}
A(m)&:=\sum_{j=-1}^J\Big(2^{j(\frac{p}{2}-1)}\sum_{k\in K_j}\E\Big[\big|\hat \theta_k^{(m_j)}-\theta_{jk}\big|^p\Big]+\e^p2^{j(\frac{p}{2}-1)}P_j(m_j)\Big)^{\frac{2}{p}}\\
&\lesssim \sum_{j=-1}^J\Big(B_p(m_j)+\e^p\sigma_p^p2^{j(\frac{p}{2}-1)}  |m_j|+\e^p2^{j(\frac{p}{2}-1)}P_j(m_j)\Big)^{\frac{2}{p}}\\
&\lesssim \sum_{j=-1}^JB_p^{\frac{2}{p}}(m_j)+\sum_{j= -1}^J\Big(\e^p\sigma_p^p2^{j(\frac{p}{2}-1)}  |m_j|\Big)^{\frac{2}{p}}+\sum_{j= -1}^J\Big(\e^p2^{j(\frac{p}{2}-1)}P_j(m_j)\Big)^{\frac{2}{p}}\\
&=:\tilde{B}_p(m)+ \tilde{V}_p(m)+\tilde{P}_p(m).
\end{align*}
Then we have to control for each collection $\M$   (Homogeneous, Intermediate, Sparse) the terms $\tilde{B}_p(m)$, $\tilde{V}_p(m)$, $\tilde{P}_p(m)$ for $m\in \M$, as well as 
$$\tilde R(\M):=(\log N)^{1-\frac{2}{p}}\e^2\sum_{j= -1}^J\Big(2^{j(\frac{p}{2}-1)}\sum_{\substack{m_j\in \M_j\\m_j\neq\emptyset}}e^{-x_{m_j}}\Big)^{\frac{2}{p}}.$$
Thus, the proof is similar to that of  Theorem~\ref{rateslp}, but with the sum in $j$ in a different position, i.e. terms of type $\sum_j z_j$ are replaced by $(\sum_j z_j^{2/p})^{p/2}$. 

$\bullet$ In the Homogeneous case, recall that a model $m=\cup_{j=-1}^{J} m_j$ belongs to $\M$ if for some $L\leq J$
$$
\forall  j\leq L,\:  m_j=\{j\}\times K_j, \qquad
\forall  j>L,\:  m_j=\emptyset.
$$ Then we can prove that 
$ \tilde{V}_p(m)\lesssim \e^2 2^L$, $\tilde{B}_p(m)\lesssim R^2 2^{-Ls}$ for $\theta \in \mathcal{B}_{r,\infty}^{s}(R)$ and $\tilde{P}_p(m)\leq \e^2 2^L$. Moreover $\tilde R(\M)\lesssim (\log N)^{1-\frac{2}{p}}\e^2$. This allows us to conclude.

$\bullet$  We now deal with the Intermediate case ($\frac{p}{2s+1}<r<p$). For this case, we slightly modify the previous collection of models: we still have $\M=\bigcup_{L= 0} ^{J}\M(L)$ and 
$m$ belongs to $\M(L)$ if 
$$m_j=\begin{cases}
 \Lambda_j & \text{ if } -1\leq j\leq {L-1}\\
m_{L+l} \subset \Lambda_{L+l}& \text{ if } l=j-L\geq 0 \text{ with }|m_{L+l}|=\lfloor 2^{L+l} \tilde A(l)\rfloor
\end{cases}$$
but this time {$\tilde A(l):=2^{-lp/2}(l+1)^{-3p/2}$}.
We follow Section~\ref{proofupperintermediate} step by step, keeping in mind that $p>2$ and $b=\frac{p}{2}-1$. We have $\tilde R(\M)=(\log N)^{1-\frac2p}\e^2(T_1+T_2)$ with 
$T_1=\sum_{j=-1}^J\left[2^{jb}|m_j|^{\left(-b\right)_+}e^{-K|m_j|}\right]^{\frac2p}<\infty $ and 
\begin{align*}
T_2&\leq\sum_{L= 0}^{J}\sum_{j=L}^J\left[2^{jb}|\M_j(L)|e^{-K|m_j|\big(1+\log\big(2^{(j-L)p/2}(j-L+1)^{3p/2}\big)\big)}\right]^{\frac2p}\\
&\leq\sum_{L= 0}^{J}\sum_{l=0}^{J-L}\left[2^{(L+l)b}|\M_{L+l}(L)|e^{-K 2^L2^{-lb}(l+1)^{-3p/2}\big(1+\log\big(2^{lp/2}(l+1)^{3p/2}\big)\big)}\right]^{\frac2p},
\end{align*}
with
 $
\log |\M_j(L)|\leq  C 2^{L}2^{-lb}(l+1)^{-p}
$
for $j\geq L$. 
Then, for $K$ large enough 
\begin{align*}
T_2&\leq \sum_{L= 0}^{J}\sum_{l=0}^{l_{\max}}\left[2^{(L+l)b}\exp\Big((C-Kp\log(2)/2)2^{L}
2^{-lb}(l+1)^{-p}\Big)\right]^{\frac2p}\\
&\leq \sum_{L= 0}^{J}\sum_{l=0}^{l_{\max}}2^{2(L+l)b/p}\exp\Big(\frac2p(C-Kp\log(2)/2)2^{L}
2^{- l_{\max}b}( l_{\max}+1)^{-p}\Big)\\
&\leq \sum_{L= 0}^{J}\sum_{l=0}^{l_{\max}}2^{2(L+l)b/p}\exp\Big(\frac2p(C-Kp\log(2)/2)( l_{\max}+1)^{p/2}\Big)\\
& \lesssim \sum_{L= 0}^{J}2^{2Lb/p}\exp(-\tilde K L) <\infty
\end{align*}
where we have used that
$2^{L}2^{-l_{\max}b}(l_{\max}+1)^{-3p/2}\approx 1$
and $l_{\max}\approx L/b$. Thus $\tilde R(\M)\lesssim (\log N)^{1-\frac{2}{p}}\e^2$.
We now deal with $\tilde V_p(m)$: for any $m\in \M(L)$,
\begin{align*}
\tilde V_p(m)&=\sum_{j= -1}^J\Big(\e^p\sigma_p^p2^{j(\frac{p}{2}-1)}  |m_j|\Big)^{\frac{2}{p}}\\
&\leq\e^2\sigma_p^2\left(\sum_{j< L}\Big(2^{j(p/2-1)}2^j\Big)^{\frac{2}{p}}
+\sum_{l\geq 0}\Big(2^{(L+l)(\frac{p}{2}-1)}2^{L}2^{-l(p/2-1)}(l+1)^{-3p/2}\Big)^{\frac{2}{p}}\right)\\
&\lesssim \e^22^{L}.
\end{align*}
For the study of $\tilde B_p(m)$, we follow the proof of Lemma~\ref{biaisBesov} \textit{mutatis mutandis}:
\begin{align*}
\inf_{m\in \M(L)}\tilde B_p(m)&=\sum_{l\geq 0}\Big(2^{(L+l)(\frac{p}{2}-1)}\sum_{k>\lfloor  2^{L+l}\tilde A(l)\rfloor }|\theta_{L+l,(k)}|^p\Big)^{\frac{2}{p}}\\
&\leq \sum_{l\geq 0}\Big(2^{(L+l)(\frac{p}{2}-1)}(\sum_k |\theta_{L+l,k}| ^r)^{\frac{p}r} (\lfloor  2^{L+l}\tilde A(l)\rfloor +1)^{1-\frac{p}r}\Big)^{\frac{2}{p}}\\
&\leq \sum_{l\geq 0}\Big(2^{(L+l)(\frac{p}{2}-1)}(\sum_k |\theta_{L+l,k}| ^r)^{\frac{p}{r}} (2^{L}2^{-l(\frac{p}2-1)}(l+1)^{-3p/2})^{1-\frac{p}r}\Big)^{\frac{2}{p}}\\
&\leq \Big(2^{L(\frac{p}{2}-\frac{p}{r})}\Big)^{\frac{2}{p}}\sum_{l\geq 0}\Big(
2^{l(\frac{p}2-1)\frac{p}r}(l+1)^{\frac{3p}{2}(\frac{p}{r}-1)} (\sum_k |\theta_{L+l,k}| ^r)^{p/r}\Big)^{\frac{2}{p}}. 
\end{align*}
Since $\theta\in \mathcal{B}_{r,\infty}^{s}(R)$,  for all $l$:
$
\left(\sum_{k}|\theta_{L+l,k}|^r \right)^{p/r}\leq R^p  2^{-(s p+\frac{p}2-\frac{p}r)(L+l)}.
$
Finally
\begin{align*}
\inf_{m\in \M(L)}\tilde B_p(m)&\leq R^22^{L(1-\frac{2}{r})}\sum_{l\geq 0}2^{l(\frac{p}2-1)\frac{2}r}(l+1)^{3(\frac{p}r-1)}2^{-(2s+1-\frac{2}{r})(L+l)}
\\
 &\leq R^2 2^{-2s L}\sum_{l\geq 0}
2^{2l(\frac{p}{2r}-\frac12-s )}(l+1)^{3(\frac{p}r-1)} 
\lesssim R^22^{-2s L}
\end{align*}
since the series converges because $\frac{p}{2r}-\frac{1}2-s <0\Leftrightarrow \frac{p}{2s+1}<r$.\\
It remains to control $\tilde P_p(m)=\sum_{j= -1}^J\Big(\e^p2^{jb}P_j(m_j)\Big)^{{2}/{p}}$ with $P_j$ given through \eqref{pen2}.
We have
$$P_j(m_j)\leq(2q\log N)^{\frac{p}{2}-1}\big(\frac32\sigma_2^2|m_j|+\kappa_2x_{m_j}\big)$$
so that, for any $m\in \M(L)$,
\begin{align*}
\tilde P_p(m)
&\lesssim  \e^2(\log N)^{1-\frac{2}{p}}\Bigg[\sum_{j<L}\Big(2^{jb}|m_j|\Big)^{\frac{2}{p}}
+\sum_{j\geq L}\Big(2^{jb}2^{(1+b)L}2^{-jb}(j-L+1)^{-\frac{3p}2}\big((j-L+1)+\log(j-L+1)\big)\Big)^{\frac{2}{p}}\Bigg]\\
&\lesssim \e^2(\log N)^{1-\frac{2}{p}}\Big[2^{L}+2^{L}\sum_{l\geq 0}(l+1)^{-3}\big(l+1+\log(l+1)\big)^{\frac{2}{p}}\Big]
\\&\lesssim \e^2(\log N)^{1-\frac{2}{p}}2^{L}.
\end{align*}
We conclude
\begin{align*}
\inf_{m\in\M}A(m)
&\leq\inf_{L}\left\{\inf_{m\in \M(L)}\tilde B_p(m)+\sup_{m\in \M(L)}\tilde V_p(m)+\sup_{m\in \M(L)}\tilde P_p(m)\right\}\\
&\lesssim\inf_{L}\left\{R^22^{-2s L}+\e^2(\log N)^{1-\frac{2}{p}}2^{L}\right\}\\
&\lesssim R^{\frac{2}{1+2s}}\e^{\frac{4s}{1+2s}}|\log \e|^{\frac{2s(p-2)}{p(1+2s)}},
\end{align*}
by taking $L$ such that 
$$2^L\approx (\e^{-1} R)^{\frac{2}{1+2s}}|\log \e|^{-\frac{(p-2)}{p(1+2s)}}.$$
$\bullet$ In the Sparse case,
Remark~\ref{RemarkResteSparse} gives $\tilde R(\M)\lesssim (\log N)^{1-\frac{2}{p}}\e^2$ and Remark~\ref{RemarkBiaisVarSparse} provides $$\sup_{\theta\in \mathcal{B}_{r,\infty}^s(R)} \left(\tilde{B}_p(m)+\tilde{V}_p(m)\right) \lesssim  R^{2(1-2\beta)}|\log \varepsilon|^{2\beta}\: \e^{4\beta}.$$
Finally, following the outlines of the end of Section~\ref{proofuppersparse}, we can prove 
$$\tilde P_p(m)\lesssim \left(R^{p(1-2\beta)}\e^{2\beta p}|\log\e|^{p\beta}\right)^{2/p}.$$
In the frontier case, $r=p/(2s+1)$,  we obtain
$$\sup_{\theta\in \mathcal{B}_{r,\infty}^s(R)} \left(\tilde{B}_p(m)+\tilde{V}_p(m)\right)+\tilde P_p(m) \lesssim  R^{2(1-2\beta)}|\log \varepsilon|^{2\beta+1}\: \e^{4\beta}.$$
This ends the proof.

\section*{Acknowledgements}

We are grateful to Rados\l aw Adamczak for his advices about concentration inequalities for sub-Weibull variables. 

\bibliographystyle{alpha}
\bibliography{bibli}


\end{document}